\documentclass[12pt]{amsart}
\usepackage{cancel}
\usepackage{amssymb}
\usepackage{hyperref}
\usepackage{enumitem}
\usepackage{amsmath}
\usepackage{latexsym}
\usepackage{extarrows}
\usepackage{enumitem}
\usepackage{txfonts}
\usepackage{mathtools}
\usepackage{bm}
\usepackage{tikz-cd}
\usepackage{xcolor}
\usepackage{quiver}
\usepackage{comment}
\usepackage{graphics}
\usepackage{float}
\usepackage{relsize}
\usepackage{bm}
\makeatletter
\@namedef{subjclassname@2020}{%
  \textup{2020} Mathematics Subject Classification}
\makeatother
\usepackage[T1]{fontenc}

\hypersetup{
    hidelinks,
    linktoc=page,
    }

\counterwithin{figure}{section}
\numberwithin{equation}{section}

\theoremstyle{plain}

\newtheorem{theorem}{Theorem}[section]
\newtheorem{proposition}[theorem]{Proposition}

\newtheorem{lemma}[theorem]{Lemma}

\theoremstyle{definition}
\newtheorem{definition}[theorem]{Definition}

\newtheorem{remark}[theorem]{Remark}

\newtheorem{example}[theorem]{Example}

\newtheorem{question}[subsection]{Question}

\newcommand\E{\mathbb{E}}
\newcommand\Z{\mathbb{Z}}

\newcommand\N{\mathbb{N}}

\newcommand\YY{\mathrm{\bf Y}}
\newcommand\XX{\mathrm{\bf X}}

\newcommand\IP{\mathrm{IP}}
\newcommand\EIP{\E_{\gamma\in \IP_{\Phi_N}(\gn)}}

\newcommand{\sEIP}{\E_{\gamma\in \IP_{\Phi_N}}}

\newcommand{\dAPk}{d_{\mathrm{HP}_k}}
\newcommand{\dAP}{d_{\mathrm{HP}_3}}

\newcommand{\gn}{(\gamma_n)_{n\in \N}}
\newcommand{\rec}{\mathrm{rec}}
\newcommand{\norm}[1]{\left\lVert#1\right\rVert}
\newcommand{\ign}{(i\cdot\gamma_n)_{n\in\mathbb{N}}}

\begin{document}
\newcommand{\cdAPk}{\mathrm{conv}\dAPk}


\baselineskip=17pt


\title[Furstenberg-Katznelson constant over finite fields]{On the Furstenberg-Katznelson constant for the IP Szemer\'edi theorem over finite fields}

\author[O. Shalom]{Or Shalom}
\address{Department of Mathematics\\ Bar Ilan University \\ 
Ramat Gan \\
5290002, Israel}
\email{Or.Shalom@math.biu.ac.il}

\date{\today}

\begin{abstract} 
In \cite{Bergelson2000aspects}, Bergelson et al. observed that Furstenberg's proof of Szemer\'edi's theorem provides a positive lower bound on the density of arithmetic progressions in sets of positive density in the integers. Namely, for every $\delta\in(0,1]$ and every $k\in \mathbb{N}$, there exists a positive constant $c=c(k,\delta)>0$ such that
$$\{n\in \mathbb{N} : d(E\cap (E-n)\cap\dots\cap (E-(k-1)n))>c(k,\delta)\} \neq \emptyset$$ whenever $d(E)\ge \delta$. Similarly, in  \cite{FK85}, Furstenberg and Katznelson proved the IP Szemer\'edi theorem, establishing in particular the existence of a constant $c_{\IP}=c_{\IP}(k,\delta)>0$ such that 
$$\{n\in \mathbb{N} :  d(E\cap (E-n)\cap\dots\cap (E-(k-1)n))>c_{\IP}(k,\delta)\}$$ is $\IP^*$ whenever $d(E)\ge \delta$. In this paper, we study analogues of $c$ and $c_{\IP}$ and their ergodic-theoretic counterparts, $c^{\mathrm{rec}}$ and $c_{\IP}^{\rec}$, for vector spaces over finite fields. We provide a qualitative result (Theorem \ref{qualitativemain}) and in special cases such as Roth's theorem and the IP-Roth theorem, we provide strong quantitative bounds (Theorem \ref{quantitative}) for these constants. Our tools are primarily ergodic theoretic; we study the characteristic factors and limit of multiple ergodic averages along $\IP$s in vector spaces over finite fields.
\end{abstract}

\subjclass[2020]{Primary 37A15, 11B30; Secondary 28D15, 37A35.}

\keywords{}

\maketitle
\section{Introduction}\label{introduction}  
\subsection{Overview}
The Furstenberg and Katznelson \cite{FK85} $\IP$ Szemer\'edi theorem asserts that every set of positive upper Banach density in the integers contains arbitrarily long arithmetic progressions whose common difference lies in any pre-determined IP-set (see Section \ref{FKconstant:sec}). This paper studies the optimal lower bound for the density of such arithmetic progressions within sets of positive density in a vector space over a finite field (see Theorem \ref{qualitativemain} for the main qualitative result and Theorem \ref{quantitative} for our main quantitative results). Even though we obtain new quantitative results, our primary tools are \emph{qualitative} (i.e., ergodic-theoretic). Roughly speaking, using ergodic-theoretic tools, we will show that the Furstenberg-Katznelson constant for the IP Szemer\'edi is equal to a (convex) density function which counts \emph{Hall-Petresco} progressions (see Definition \ref{dAPk:def}). In the case where $k=3$ (i.e., Roth's theorem and IP Roth theorem), this connection allows us to derive our bounds from existing results, namely the work of Fox and Lov\'asz \cite{Foxlovasz} (see Theorem \ref{removal-lemma}), and the lower bound of  Elsholtz et al. \cite{behrendplus}. Unfortunately, the existing bounds for longer progressions ($k>3$) were not suitable for our methods. In Section \ref{questions:sec}, we pose an open question in quantitative additive combinatorics that would be necessary to extend this approach, as well as other questions which arise naturally from our work.\\

The paper is structured as follows:
\begin{itemize}
    \item Section \ref{introduction} surveys the literature on quantitative bounds for Szemer\'edi's theorem, both for the integers and their finite field analogues.
    \item Section \ref{constants} introduces several constants studied in this paper. 
    \item Section \ref{results} formulates our main results. These include a qualitative result in the general case and a quantitative result for the $k=3$ case (i.e., Roth's theorem and the IP Roth theorem). In this section, we also prove our quantitative result, assuming the qualitative one.
    \item Section \ref{questions:sec} poses several open questions arising from this research. We particularly emphasize a question that, if resolved, could lead to quantitative bounds for progressions of length $k>3$. Such bounds appear to be beyond the reach of purely ergodic-theoretic methods.
\end{itemize}
The subsequent sections are devoted to the proof of our main qualitative result. Section \ref{IPergodicavg:sec} introduces ergodic averages along IPs (previously defined in \cite{SK}, for the integers), and Sections \ref{characteristicfactors:sec} and \ref{formula:sec} study their limit behavior. Finally, Section \ref{proof} contains the proof of our main qualitative result. The Appendix details combinatorial properties of these constants, which are derived from arguments independent of our main ergodic-theoretic strategy.
\subsection{On Szemer\'edi's theorem}
At the heart of additive combinatorics lies the study of arithmetic progressions. In the integers, a \emph{$k$-term arithmetic progression} (or $k$-AP for short) is a set of the form $\{a,a+d,a+2d,\dots,a+(k-1)d\}$ for some $a\in\mathbb{Z}$ and $d\in \mathbb{Z}$. A $k$-AP is called \emph{non-trivial} if $d\neq 0$. A central question, posed by Erd\H{o}s and Tur\'an in 1936 \cite{Erdosturan}, asks whether a \emph{large} set of integers must necessarily contain such arithmetic progressions. To quantify what \emph{large} means, we define a key function $r_k:\mathbb{N}\rightarrow[0,1]$ to be the maximum possible density (i.e., $\frac{|A|}{N}$) of a subset $A \subseteq \{1, 2, \dots, N\}$ that contains no non-trivial $k$-APs\footnote{In the literature, $r_k(N)$ is sometimes defined as the maximal cardinality, rather than density.}. Erd\H{o}s and Tur\'an conjectured that $r_k(N) = o_{k,N\rightarrow\infty}(1)$. This conjecture was resolved affirmatively, first by Roth \cite{Roth1952} for $k=3$, and in its full generality by Szemer\'edi \cite{szemeredi1975sets}. 
\begin{theorem}[Szemer\'edi's Theorem, 1975]
For any integer $k \ge 3$,
$$\lim_{N \to \infty} r_k(N) = 0.$$
\end{theorem}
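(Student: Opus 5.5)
The plan is to follow Furstenberg's ergodic-theoretic route, which is also the viewpoint the rest of this paper takes. I will argue by contradiction. Suppose there are $k\ge 3$, $\delta>0$ and arbitrarily large $N$ together with sets $A_N\subseteq\{1,\dots,N\}$ such that $|A_N|\ge \delta N$ and $A_N$ contains no non-trivial $k$-AP. The steps are as follows.

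\textbf{Step 1 (correspondence principle).} Form the indicator sequences of the $A_N$. Using a diagonal argument, or equivalently weak-$*$ limits of empirical measures on the shift space $\{0,1\}^{\mathbb{Z}}$, produce a measure-preserving system $(X,\mu,T)$ and a set $B$ with $\mu(B)\ge\delta$. These satisfy
$$\mu(B\cap T^{-n}B\cap\dots\cap T^{-(k-1)n}B)\le \limsup_{N}\frac{|A_N\cap(A_N-n)\cap\dots\cap(A_N-(k-1)n)|}{N}$$
for each fixed $n$. Since the right-hand side vanishes for every $n\ne0$, it suffices to prove the \emph{multiple recurrence theorem}: for $\mu(B)>0$ one has
$$\liminf_{M\to\infty}\frac1M\sum_{n=1}^{M}\mu\Big(\bigcap_{i=0}^{k-1}T^{-in}B\Big)>0.$$
In particular some $n\ne 0$ gives positive measure, which is the desired contradiction.

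\textbf{Step 2 (structure theory).} Using the ergodic decomposition, reduce to the case of an ergodic $T$. Then build the Furstenberg–Zimmer tower of factors: $X$ is a transfinite limit of compact (isometric) extensions, starting from the trivial factor. Moreover, $X$ is a weakly mixing extension of the maximal distal factor $Y$ in this tower.

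\textbf{Step 3 (induction up the tower).} Call a factor SZ if the recurrence conclusion holds for all sets of positive measure in it. I will show the following three facts.
\begin{enumerate}
\item[(a)] The trivial factor is SZ.
\item[(b)] SZ passes to compact extensions. This uses a van der Waerden–type coloring argument on finitely many almost-periodic fibers, via the Hilbert–Schmidt approximation of $L^2$ functions by finite-rank modules.
\item[(c)] SZ passes to weakly mixing extensions. This uses a van der Corput lemma, which shows that the averages $\frac1M\sum_n\prod_i T^{in}f_i$ depend, in the limit, only on the conditional expectations $\E(f_i\mid Y)$.
\end{enumerate}
Finally, SZ is preserved under inverse limits, because a set is approximated in measure by sets from the factors. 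Combining these, $X$ itself is SZ.

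The main obstacle is the compact-extension step (b). There one must choose $B'\subseteq B$ whose fiberwise indicator functions are uniformly almost periodic. One then passes to a subset of fibers where a common $n$, given by van der Waerden's theorem applied to a finite $\varepsilon$-net coloring, works simultaneously. This has to be done while keeping a positive-measure set of base points, and the measurable selection and uniformity bookkeeping are the delicate part. I would first handle this step via the standard relative almost-periodicity lemma and a pigeonhole over a finite net. Since the statement is classical, within this paper it is simply cited from \cite{szemeredi1975sets} (and Furstenberg's proof), and the sketch above indicates the route.
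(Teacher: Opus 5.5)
The paper gives no proof of this statement. It quotes the theorem from \cite{szemeredi1975sets}, whose argument is combinatorial, and mentions Furstenberg's ergodic proof \cite{furstenberg1977ergodic} only afterwards, in the stronger form of Theorem~\ref{FS}. Your outline follows Furstenberg's route (correspondence, multiple recurrence, Furstenberg--Zimmer tower, compact extensions, inverse limits, final relatively weakly mixing extension). That route fits the paper's viewpoint, and as a sketch it is sound. Step~1 is correct as written: the cylinder $B=\{x:x(0)=1\}$ is clopen, so along the convergent subsequence you get $\mu(B)\ge\delta$ and $\mu(\bigcap_{i=0}^{k-1}T^{-in}B)=0$ for every $n\neq 0$. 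The reduction to ergodic $T$ is harmless by Fatou's lemma applied to the ergodic decomposition. Compared with citing Szemer\'edi's combinatorial proof, your route gives no rate of decay for $r_k(N)$. In exchange, it gives the positive lower bound on the recurrence averages on which Theorem~\ref{FS} rests.

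Three points should be tightened.

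\textbf{The tower in Step~2.} The claim that $X$ is a transfinite limit of compact extensions is false in general; a weakly mixing $X$ is a counterexample. It is the maximal distal factor $Y$ that is such a tower over the trivial factor, and $X$ is only relatively weakly mixing over $Y$. Your ``Moreover'' sentence says this correctly, but it contradicts the sentence before it.

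\textbf{The inverse-limit step.} This does not follow from approximation in measure. If $B'$ lies in a factor $X_\alpha$ with $\mu(B\triangle B')<\varepsilon$, you only get $\mu(\bigcap_i T^{-in}B)\ge\mu(\bigcap_i T^{-in}B')-k\varepsilon$. The recurrence constant of $B'$ depends on $B'$, hence on $\varepsilon$, so it may be smaller than $k\varepsilon$. The standard fix uses the disintegration $\mu=\int\mu_y\,d\nu(y)$ over $X_\alpha$:
\begin{itemize}
\item Since $\E(1_B\mid X_\alpha)\to 1_B$ in $L^2$, you can choose $\alpha$ so that $B'=\{y:\mu_y(B)>1-\frac{1}{2k}\}$ has positive measure.
\item Since $\mu_y(T^{-in}B)=\mu_{T^{in}y}(B)$, every $y\in\bigcap_i T^{-in}B'$ satisfies $\mu_y(\bigcap_i T^{-in}B)\ge\frac12$.
\item Hence $\mu(\bigcap_i T^{-in}B)\ge\frac12\nu(\bigcap_i T^{-in}B')$, and the recurrence property transfers from $X_\alpha$ to the limit.
\end{itemize}

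\textbf{Hilbert--Schmidt in step (b).} The Hilbert--Schmidt/finite-rank-module argument belongs to the proof of the structure theorem: it shows that an extension which is not relatively weakly mixing has a nontrivial intermediate compact extension. In step (b) you only need that relatively almost periodic functions are dense, which is what being a compact extension means.
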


The study of the exact quantitative behavior of $r_k(N)$ is a central open problem in additive combinatorics.\footnote{This problem, posed by Erd\H{o}s, currently has a \$10,000 prize.} When $k=3$, Behrend \cite{Behrend} established a lower bound for $r_3(N)$. He showed that for every $N$, there exists a set $A \subseteq \{1, 2, \dots, N\}$ of cardinality $|A| \ge N \cdot \exp(-c\sqrt{\log N})$, containing no three-term arithmetic progressions, where $c>0$ is an absolute constant. Hence, $r_3(N) \ge \exp(-c\sqrt{\log N})$. Upper bounds for $r_3(N)$ were initially obtained by Roth \cite{Roth1952} in 1953, who showed that $r_3(N)=O\left(\frac{1}{\log \log N}\right)$. Several improvements were obtained afterwards (see, e.g.,~Szemer\'edi \cite{Szemeredi2}, Bourgain \cite{Bourgaintriplet,Bourgainrevisit}, and Sanders \cite{Sanders}), until eventually Bloom and Sisask \cite{BS1} were able to achieve the upper bound $r_3(N) =O\left(\frac{1}{\log N^{1+c}}\right)$, breaking the so-called \emph{logarithmic barrier}. Finally, in a major breakthrough, Kelley and Meka \cite{KM} provided a new bound, much closer to Behrend's lower bound, showing that $r_3(N)\le \exp\left(-c(\log N)^{1/12}\right)$ for some $c>0$.\footnote{Recently, Bloom and Sisask \cite{BS2} slightly improved the exponent further to $1/9$, and Raghavan \cite{Raghavan} subsequently improved it to almost $1/6$, pushing it closer to the $1/2$ exponent of Behrend's lower bound.} For arithmetic progressions of length $k\ge 4$, quantitative bounds were first established by Gowers \cite{gowers1,gowers2} and were recently improved in special cases (e.g.,~for $k=4$ see \cite{green-tao-r4} and for $k=5$ see \cite{leng2024improved}).
\subsection{The finite field analogue}
The finite field analogue of the questions introduced above is particularly relevant to this paper. Throughout, we fix a prime $p$ and let $V = \mathbb{F}_p^n$ for some integer $n \ge 1$. We are primarily interested in the asymptotic regime where $p$ is fixed and $n \to \infty$. As in the integer setting, a $k$-term arithmetic progression (or $k$-AP) in $V$ is a set of the form $\{a, a+d, \dots, a+(k-1)d\}$ for $a, d \in V$. A $k$-AP is non-trivial if $d \neq 0$. When $3 \le k \le p$,\footnote{All arithmetic progressions in $V$ are of length at most $p$, because $a+pd = a$ for all $a, d \in V$. We thus assume $k\le p$.} we define $r_k(\mathbb{F}_p^n)$ as the maximal density $\frac{|A|}{|V|}$ of a subset $A \subseteq \mathbb{F}_p^n$ containing no non-trivial $k$-APs. An analogue of Behrend's construction in finite fields was recently improved in \cite{behrendplus} and is particularly important in this paper.
\begin{theorem}[Improved Behrend construction over finite fields \cite{behrendplus}]\label{behrendplus:theorem}
   There exists a constant $c > 1/2$ such that for every prime $p$ and every sufficiently large positive integer $n$ (sufficiently large in terms of $p$), there exists a subset $A\subseteq \mathbb{F}_p^n$ of size $\ge (c\cdot p)^n$, containing no non-trivial $3$-APs. In other words, $r_3(\mathbb{F}_p^n)\ge  c^n$.
\end{theorem}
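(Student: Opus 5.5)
This is the improved Behrend construction of Elsholtz et al.; I would prove it by a slice-rank-free, product-type construction in the spirit of Behrend. The idea is to work coordinate-wise with digits, and the natural model is the set of vectors with all coordinates in a fixed 3-AP-free structured subset of $\{0,\dots,p-1\}$, refined by a sphere condition.

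\emph{Step 1: a baseline of $(p/2)^n$.} Identify $\mathbb{F}_p$ with $\{0,1,\dots,p-1\}$. Consider vectors $x\in\{0,\dots,\lfloor (p-1)/2\rfloor\}^n$. For such vectors, a relation $x+z=2y$ in $\mathbb{F}_p^n$ holds coordinatewise in $\mathbb{Z}$, with no wraparound, since every coordinate sum is at most $p-1$. Inside this box, I would impose Behrend's sphere condition, $\sum_i x_i^2 = r$ for a popular value $r$. Strict convexity of the Euclidean norm then forces $x=y=z$. Pigeonholing over the at most $np^2$ possible values of $r$ gives a 3-AP-free set of size at least $((p+1)/2)^n/(np^2)$. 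This is $(c'p)^n$ for any $c'<1/2$ once $n$ is large.

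\emph{Step 2: getting a constant strictly above $1/2$.} The bottleneck in Step 1 is that the digit alphabet is restricted to about half of $\mathbb{F}_p$. I would instead use a block (tensor-power) construction. Find a fixed dimension $m$ and a set $B\subseteq\mathbb{F}_p^m$ with two properties. First, $B$ lifts to integer vectors so that any $x,y,z\in B$ with $x+z=2y$ in $\mathbb{F}_p^m$ satisfies $x+z-2y\in\{0\}\cup W$, where $W$ is a small controlled set of carry patterns. Second, $|B|^{1/m}>\frac{p}{2}(1+\epsilon)$. An example is a union of several translated half-boxes whose pairwise sums avoid collisions modulo $p$. Then take $B^{n/m}$ and impose a quadratic sphere condition together with linear "carry-class" conditions, so that all collisions are forced to be trivial. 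Pigeonholing over the polynomially many constraint values loses only a subexponential factor, and the resulting bound is $(c\,p)^n$ with $c=\frac12(1+\epsilon)^{1/1}>1/2$ after absorbing losses.

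Since $\mathbb{F}_p^n\cong\mathbb{F}_p^{m}\times\cdots$, the product structure is legitimate, and finishing for $n$ not divisible by $m$ only costs a constant factor. The "sufficiently large $n$ in terms of $p$" hypothesis absorbs the polynomial losses.

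\emph{Main obstacle.} Step 2 is where I expect the difficulty: finding a base block, uniform in $p$, that beats density $1/2$ per coordinate while keeping the wraparound relations rigid enough for a Behrend-type norm argument to exclude all nontrivial solutions. I would first try $m=2$ or $m=3$. The candidate block would be $\{x: x_1\in[0,p/2)\}\cup\{x: x_1 \text{ near } p-1,\ x_2 \text{ in a restricted range}\}$. I would check by hand that the midpoint equation, in each case, reduces to an integer equation that a quadratic plus a linear constraint can kill.
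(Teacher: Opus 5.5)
Your proposal has a genuine gap: it does not produce any constant $c>\tfrac12$ that is uniform in $p$, and that is the whole content of the statement. (The paper does not prove this result; it imports it as a black box from Elsholtz et al.)

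Step 1 is the classical Behrend construction and is correct. It gives $((p+1)/2)^n/(np^2)$, that is, a constant $c_p=\tfrac12(1+O(1/p))$, which tends to $\tfrac12$ as $p\to\infty$. As an aside, this already yields sets of size $\lceil (p/2)^n\rceil$ for $n$ large in terms of $p$, which is all the paper's proof of Theorem~\ref{quantitative} actually uses. It does not, however, prove the statement.

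Step 2 is where all the content would have to be, and you do not carry it out. Your first requirement on $B$ is vacuous: after lifting to $\{0,\dots,p-1\}^m$ one always has $x+z-2y\in p\{-1,0,1\}^m$. What you really need is the following:
\begin{itemize}
\item a block $B\subseteq\mathbb{F}_p^m$ with $|B|^{1/m}\ge(\tfrac12+\epsilon)p$ for all $p$;
\item a function $\phi:B\to\mathbb{R}$ (rescaled to be integer-valued) with $\phi(x)+\phi(z)>2\phi(y)$ for every triple in $B$ with $x+z=2y$ in $\mathbb{F}_p^m$ and $x\neq z$, whether or not it wraps around.
\end{itemize}
A level set of $\sum_j\phi(x^{(j)})$ in $B^{n/m}$ then works with only polynomial loss. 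You exhibit no such pair $(B,\phi)$ and give no argument that one exists.

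The mechanisms you suggest also do not supply the required rigidity.

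\emph{The sphere does not kill wraparound.} Suppose $x,y,z$ lie on a common sphere about $q$ and $x+z=2y+pw$ in $\mathbb{Z}^n$. Then $|x-z|^2+p^2|w|^2+4p\langle y-q,w\rangle=0$, and fixing coordinate sums only adds $\sum_i w_i=0$. Strict convexity disposes of $w=0$ only. Once coordinates may range over more than $p/2$ values, take $q$ on the diagonal, $w=e_1-e_2$, $y_1=0$ and $y_2>p/2$. The identity then demands $|x-z|^2=4py_2-2p^2>0$, so it no longer forces $w=0$. This is the familiar reason sphere constructions stop at about $p/2$ per coordinate.

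\emph{Your concrete candidate provably fails.} The block $\{x:x_1\in[0,p/2)\}\cup\cdots$ contains full lines $\{(a,t):t\in\mathbb{F}_p\}$. On such a block no block-additive constraint can help, whether a sphere, linear ``carry-class'' conditions, or even fixing the full type class. Take $p$ blocks with $x^{(j)}=(a,j-1)$, $y^{(j)}=(a,j)$, $z^{(j)}=(a,j+1)$ for $j\in\mathbb{F}_p$, and all other blocks equal. Then $x+z=2y$ and $x\neq y$, yet $x,y,z$ have the same multiset of blocks. Equivalently, summing $\phi(u+(t-1)v)+\phi(u+(t+1)v)-2\phi(u+tv)$ over $t\in\mathbb{F}_p$ gives $0$, so no admissible $\phi$ exists on any $B$ containing a line.

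Beating $\tfrac12$ uniformly needs a genuinely new idea, which is what the cited work supplies. As written, your argument establishes no uniform $c>\tfrac12$, and the closing formula $c=\tfrac12(1+\epsilon)^{1/1}$ does not parse.
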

This result will be used as a black box in order to derive strong upper bounds for our main quantitative result (Theorem \ref{quantitative}). In the other direction, we have the upper bound $r_3(\mathbb{F}_3^n) \le \left(\frac{2.756\dots}{3}\right)^n$, established by Ellenberg and Gijswijt \cite{EG} when $p=3$, using the polynomial method. For general $p>3$, the best known upper bound follows from the lower bound of Fox and Lov\'asz for Green's removal lemma \cite{Foxlovasz}. More specifically, in our setting, Fox and Lov\'asz established the following result (see \cite[Theorem 1.4]{KM} for this precise formulation).
\begin{theorem}\label{removal-lemma}
    Let $p\ge 3$. There exists a constant $C_p>0$, such that for all $n\in\mathbb{N}$, if $A\subseteq \mathbb{F}_p^n$ is a set of density $\delta$, then $$\E_{x,a\in \mathbb{F}_p^n}1_A(x)1_A(x+a)1_A(x+2a)\ge \left(\frac{\delta}{3}\right)^{C_p}.$$ Furthermore, $C_p=1+\frac{1}{c_p}$, where $c_p$ satisfies
    $$p^{1-c_p} = \inf_{0<x<1} x^{-(p-1)/3}(x^0+x^1+\dots+x^{p-1}).$$
\end{theorem}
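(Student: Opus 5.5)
We follow the Fox--Lov\'asz route: an arithmetic removal lemma in $\mathbb{F}_p^n$, with polynomial bounds, obtained from the polynomial-method (slice-rank) bounds for tricolored sum-free sets. The key equivalence is the following. Suppose every $3$-AP system in $\mathbb{F}_p^n$ (triples $x, x+a, x+2a$) that requires removing at least $\epsilon p^n$ points to destroy all $3$-APs has at least $\epsilon^{C_p}p^{2n}$ (up to constants) $3$-APs. Then a set $A$ of density $\delta$ gives what we want: any set with no non-trivial $3$-AP has density at most $r_3(\mathbb{F}_p^n)$, which is exponentially small.

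\textbf{Step 1: a trivial regime.} If $\delta^{C_p}$ is smaller than roughly $p^{-n}$ times a constant, the trivial APs $a=0$ already contribute $\delta p^{-n}$ to the average, and this dominates. So we may assume $\delta$ is not exponentially tiny relative to $n$.

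\textbf{Step 2: the main dichotomy, via a bound on induced matchings.} Greedily extract from $A$ a maximal family of pairwise disjoint non-trivial $3$-APs $\{x_i, x_i+a_i, x_i+2a_i\}$, for $i\le m$. After removing these $3m$ points, what remains has no $3$-AP, so $\delta p^n - 3m\le r_3 p^n$. Since $r_3\le (p^{1-c_p})^n/p^n$ by slice rank, this gives $m\gtrsim \delta p^n$ when $\delta$ exceeds that bound. The standard Fox--Lov\'asz argument instead counts \emph{induced} tricolored sum-free structures. We would set $X_i=x_i$, $Y_i=x_i+a_i$, $Z_i=x_i+2a_i$, which satisfy $X_i-2Y_i+Z_i=0$. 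Slice rank bounds the size of any family with $X_i-2Y_j+Z_k=0$ iff $i=j=k$ by $p^{(1-c_p)n}$, and this is where the stated formula for $c_p$ comes from, as the exponent in the Kleinberg--Sawin--Speyer / Blasiak et al.\ bound for $p$. The plan is then to sample a random subfamily: keep each $3$-AP of a large collection of APs in $A$ with probability $q$. Deleting the ones that create a non-induced coincidence, the expected number of survivors is at least $qm-q^2T/m'$-type terms, where $T$ is the total number of $3$-APs. Comparing with the slice-rank bound yields $T\gtrsim m^{1+c_p}\,p^{-c_p n}$ for suitably chosen $q$.

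\textbf{Step 3: optimization.} Take $m\ge \delta p^n/3$, which comes from a removal-type greedy argument using a decomposition into disjoint APs covering a constant fraction of $A$. Then $T\ge (\delta/3)^{1+c_p}p^{(1+c_p)n - c_pn}\cdot p^{n}$ after normalizing. Dividing by $p^{2n}$ gives $(\delta/3)^{1+c_p}$, and we match exponents to $C_p=1+1/c_p$ by choosing $q=(\text{stuff})^{1/c_p}$ optimally.

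The main obstacle is Step 2: justifying the random-sparsification count. It requires controlling the number of ``bad'' coincidences $X_i-2Y_j+Z_k=0$ with $(i,j,k)$ not diagonal by the total $3$-AP count $T$. Getting the exponents to come out exactly as $1+1/c_p$, with the constant $3$, requires careful bookkeeping. I would try first to follow the Fox--Lov\'asz choice, namely $q$ set to the power of $\delta$ that balances the two terms.
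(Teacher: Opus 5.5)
The paper gives no proof of this statement. It quotes it as a black box from Fox--Lov\'asz, in the formulation of Kelley--Meka, Theorem 1.4. So your proposal has to stand on its own, and its quantitative core (Steps 2--3) does not work.

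\textbf{What the sampling argument actually gives.} Let $(x_i,y_i,z_i)_{i\le m}$ be disjoint non-trivial $3$-APs in $A$, and let $T$ be the number of $3$-APs. Disjointness forces every off-diagonal solution of $x_i-2y_j+z_l=0$ to have $i,j,l$ pairwise distinct. Sampling at rate $q$ therefore gives $qm-q^3T\le p^{(1-c_p)n}$, and hence $T\gtrsim m^3p^{-2(1-c_p)n}$; your $q^2$ version gives the even weaker $T\gtrsim m^2p^{-(1-c_p)n}$. With $m\asymp\delta p^n$ this reads $T/p^{2n}\gtrsim\delta^3p^{-(1-2c_p)n}$. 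Since $c_p<1/2$, this tends to $0$ as $n\to\infty$. No choice of $q$ helps, because the bound above is already the optimum of that inequality.

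\textbf{Where Steps 2--3 go wrong.} Your claimed $T\gtrsim m^{1+c_p}p^{-c_pn}$ does not follow from the sampling inequality. After dividing by $p^{2n}$ it is only $\delta^{1+c_p}p^{-n}$, which is weaker than the trivial count. The extra factor $p^n$ in Step 3 is inserted by hand. The resulting bound $(\delta/3)^{1+c_p}$ is in fact false: a random set of density $\delta$ has $3$-AP density about $\delta^3$, which is smaller than $(\delta/3)^{1+c_p}$ for small $\delta$. Finally, nothing in the argument converts the exponent $1+c_p$ into $1+1/c_p$.

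\textbf{The missing idea: run the argument at the scale where it is sharp.} Work in dimension $n_0$ chosen so that $p^{-c_pn_0}\le\delta/2<p^{-c_p(n_0-1)}$.
\begin{enumerate}
\item If $n<n_0$, the trivial progressions already give $\delta p^{-n}\ge\delta p^{-n_0}\gtrsim_p\delta^{1+1/c_p}$. This is essentially your Step 1.
\item If $n\ge n_0$, take $W$ a uniformly random $n_0$-dimensional subspace and $a$ uniform in $W\setminus\{0\}$. Then $a$ is uniform on $\mathbb{F}_p^n\setminus\{0\}$. Hence the density of non-trivial $3$-APs in $A$ (and the full average is at least $(1-p^{-n})$ times it) is an average, over $W$ and over cosets $c+W$, of the non-trivial $3$-AP densities of $A\cap(c+W)$ inside $c+W\cong\mathbb{F}_p^{n_0}$.
\item In each coset, let $\delta_c$ be the density of $A\cap(c+W)$. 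A $3$-AP-free set $S$ gives the tricolored sum-free set $\{(s,-2s,s)\}$, so $|S|\le p^{(1-c_p)n_0}$. Greedy therefore yields at least $(\delta_c-p^{-c_pn_0})p^{n_0}/3$ disjoint progressions.
\item Sampling then gives a normalized count of at least an absolute constant times $(\delta_c-p^{-c_pn_0})_+^3\,p^{-(1-2c_p)n_0}$. This is convex in $\delta_c$.
\item Jensen over cosets yields $\gtrsim\delta^3p^{-(1-2c_p)n_0}\asymp_p\delta^{3+(1-2c_p)/c_p}=\delta^{1+1/c_p}$. This is the same slicing-plus-convexity device the paper uses in its proof of the lower bound in Theorem \ref{quantitative}, and this balance is where $C_p=1+1/c_p$ comes from.
\item The tensor power trick removes the multiplicative constant. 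Indeed, $A^k\subseteq\mathbb{F}_p^{nk}$ has density $\delta^k$ and $3$-AP density equal to the $k$-th power of that of $A$. This gives $\delta^{C_p}\ge(\delta/3)^{C_p}$.
\end{enumerate}
The stated form with $\delta/3$ is also exactly what the Fox--Lov\'asz three-set removal lemma gives when applied to $(A,-2\cdot A,A)$, since the trivial progressions there form $\delta p^n$ disjoint triangles.
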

This result will also be used as a black box in the proof of our main quantitative result (Theorem \ref{quantitative}).
A key consequence of this theorem is the upper bound on $r_3(\mathbb{F}_p^n)$. By applying this theorem to a $3$-AP-free set (whose only $3$-APs are trivial), Fox and Lov\'asz derived the bound $r_3(\mathbb{F}_p^n) = O_p(p^{-c_pn})$. Bounds for arithmetic progressions of longer lengths are also known, but are far from optimal and are not directly used in this paper. Instead, in Section \ref{questions:sec}, we propose a new question about the quantitative behavior of some higher-order arithmetic progressions known as \emph{Hall-Petresco progressions}, that arise naturally from our work (see Question \ref{HPquantitative:question}).
\section{Introducing the constants}\label{constants}
In this section, we introduce several constants that we study in this paper. The Furstenberg constant is introduced in Definition \ref{F:constant}, the Furstenberg-Katznelson constant in Definition \ref{FKconstant:def}, and their dynamical counterparts in Definition \ref{dynamical:constants}.
\subsection{Furstenberg constant}
Let $\Gamma = (\Gamma,+)$ denote a countable abelian group. A \emph{F\o lner sequence} for $\Gamma$ is a sequence of finite subsets $\Phi=(\Phi_N)_{N\in\mathbb{N}}$ satisfying the property that
\begin{equation}\label{folner}
    \lim_{N\rightarrow\infty} \frac{|(\Phi_N+a) \triangle \Phi_N|}{|\Phi_N|}=0,
\end{equation}
for all $a\in \Gamma$, where $\Phi_N+a = \{\gamma+a : \gamma\in \Phi_N\}$ and $\triangle$ denotes the symmetric difference. Given a F\o lner sequence $\Phi=(\Phi_N)_{N\in\mathbb{N}}$, we define the \emph{upper density} of a set $E\subseteq \Gamma$ along $\Phi$ by
\begin{equation}\label{density}
    d_{\Phi}(E) = \limsup_{N\rightarrow\infty} \frac{|E\cap \Phi_N|}{|\Phi_N|}.
\end{equation}
We say that a set $E\subseteq \Gamma$ has \emph{positive upper Banach density} if $d^*(E)=\sup_{\Phi} d_{\Phi}(E)>0$, where the supremum ranges over all F\o lner sequences of $\Gamma$. In 1977, Furstenberg \cite{furstenberg1977ergodic} gave an ergodic-theoretic proof of Szemer\'edi's theorem. In fact, Furstenberg proved the following stronger result.
\begin{theorem}[Furstenberg-Szemer\'edi's theorem]\label{FS}
    Let $k\ge 1$ be arbitrary. For every $\delta\in(0,1]$, there exists a positive constant $c=c(k,\delta,\mathbb{Z})>0$ with the property that \begin{equation}\label{lowerbound}\left\{n\in \mathbb{Z}\setminus\{0\} : d_{\Phi}\left(\bigcap_{i=0}^{k-1} (E-i\cdot n)\right)>c\right\}
    \end{equation} is syndetic\footnote{A subset $A\subseteq \Gamma$ of an abelian group $\Gamma$ is called \emph{syndetic} if there exists a finite set $F\subseteq \Gamma$ such that $A+F=\Gamma$.} whenever $\Phi=(\Phi_N)_{N\in\mathbb{N}}$ is a F\o lner sequence and $E\subseteq \mathbb{Z}$ is a set of density $d_{\Phi}(E)>\delta$.
\end{theorem}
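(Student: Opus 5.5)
The plan is the standard Furstenberg route: transfer to a measure-preserving system, get a uniform quantitative multiple recurrence bound there, and then obtain syndeticity from a compactness/contradiction argument over all systems.

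\textbf{Correspondence principle.} Given a F\o lner sequence $\Phi$ and $E\subseteq\mathbb{Z}$ with $d_\Phi(E)>\delta$, Furstenberg's correspondence principle produces a measure-preserving system $(X,\mathcal{B},\mu,T)$ and a set $A\in\mathcal{B}$ with $\mu(A)=d_\Phi(E)>\delta$. One passes to a subsequence of $\Phi$ along which the limsup is a limit and takes a weak-$*$ limit of empirical measures on $\{0,1\}^{\mathbb{Z}}$. The resulting system satisfies
$$d_\Phi\Big(\bigcap_{i=0}^{k-1}(E-in)\Big)\ \ge\ \mu\Big(\bigcap_{i=0}^{k-1}T^{-in}A\Big)\quad\text{for every } n.$$
It therefore suffices to find $c=c(k,\delta)>0$, independent of the system, such that $\{n\neq0:\mu(\bigcap_i T^{-in}A)>c\}$ is syndetic whenever $\mu(A)\ge\delta$.

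\textbf{Uniform recurrence constant.} Define
$$c^*(k,\delta)=\inf\ \liminf_{N\to\infty}\frac1N\sum_{n=1}^N\mu\Big(\bigcap_{i=0}^{k-1}T^{-in}A\Big),$$
with the infimum over all systems and all $A$ with $\mu(A)\ge\delta$. The key claim is $c^*>0$. By Furstenberg's multiple recurrence theorem, each individual liminf is positive. Uniformity comes from a compactness argument. Suppose there were systems $(X_j,A_j)$ with $\mu_j(A_j)\ge\delta$ whose liminfs tend to $0$. Form the product system $X=\prod_j X_j$, or equivalently a disjoint union weighted by $2^{-j}$. Then either argue directly, or use Bergelson–Host–Kra / Furstenberg to express the limit through characteristic factors, which are continuous in the data.

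The cleanest route I would take is to encode everything on the single compact space $\{0,1\}^{\mathbb{Z}}$ with the shift and the cylinder set $A_0=\{x:x_0=1\}$. The set of shift-invariant measures $\nu$ with $\nu(A_0)\ge\delta$ is weak-$*$ compact and convex. The map
$$\nu\mapsto \lim_N \frac1N\sum_{n\le N}\nu\Big(\bigcap_i T^{-in}A_0\Big)$$
exists (Host–Kra) and is positive at every point. Since $A_0$ is clopen, each finite average is continuous in $\nu$, so the liminf is upper semicontinuous. That direction is the wrong one for a minimum, so instead I would use the ergodic decomposition together with the quantitative form of Furstenberg's theorem for ergodic measures. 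Equivalently, apply Furstenberg's original argument, which actually yields the uniform bound $\lim_N\frac1N\sum_n\mu(\bigcap T^{-in}A)\ge c(k,\mu(A))$. This holds because in the inductive proof via the Furstenberg–Zimmer tower, the SZ property is verified with constants depending only on $\mu(A)$, as noted in \cite{Bergelson2000aspects}. This uniformity step is the main obstacle. Extracting a $\delta$-dependent constant from the structure theorem requires checking that each extension step (compact extensions, weak mixing extensions, limits) preserves a bound depending only on $\delta$. The alternative is the cleaner contradiction route: a counterexample sequence assembled into the product system $\prod_j X_j$ with the set $A=\bigcup$ arranged so that the density stays at least $\delta$. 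I would try the latter first, taking a convex combination of the counterexample systems weighted by $2^{-j}$.

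\textbf{From averages to syndeticity.} Suppose $\liminf_N\frac1N\sum_{n=1}^N a_n\ge c^*$, where $a_n=\mu(\bigcap_i T^{-in}A)\in[0,1]$. I want the set $\{n: a_n>c^*/2\}$ to be syndetic. Apply the same argument to averages over arbitrary intervals $[M,M+N)$, which is legitimate because the uniform bound holds for every F\o lner sequence in $\mathbb{Z}$ and the Host–Kra limit is the same for all of them. This gives $\frac1N\sum_{n=M}^{M+N-1}a_n\ge c^*/2$ for all $N\ge N_0$, uniformly in $M$. If $\{n:a_n>c^*/4\}$ had gaps of length at least $N_0$, then the average over such a gap would be at most $c^*/4$, a contradiction. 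Hence the set is syndetic, and negative $n$ are handled by symmetry. Taking $c=c^*/4$ and combining with the correspondence step proves Theorem \ref{FS}.
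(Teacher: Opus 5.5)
Your correspondence step is fine, and so is the passage from a uniform Ces\`aro lower bound to syndeticity (Host--Kra convergence over arbitrary intervals $[M,M+N)$, plus $a_{-n}=a_n$ by invariance).

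\textbf{The gap.} The gap is the step you flag yourself: positivity of $c^*(k,\delta)$ uniformly over all systems. This is the entire content beyond Furstenberg's theorem. It is also exactly what the paper does not prove but imports from \cite{Bergelson2000aspects}.

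Your preferred contradiction route cannot work. Suppose $\mu_j(A_j)\ge\delta$ and the limits $L_j>0$ tend to $0$. The weighted disjoint union $X=\bigsqcup_j X_j$, $\mu=\sum_j 2^{-j}\mu_j$, $A=\bigsqcup_j A_j$ has $\mu(A)\ge\delta$ and limiting average $\sum_j 2^{-j}L_j>0$. That is perfectly consistent with Furstenberg's theorem. This construction yields convexity (it is the argument of Proposition~\ref{convexconstants}), not positivity. Products do not help either: a cylinder over one coordinate just reproduces $L_j$, while $\prod_j A_j$ is null.

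Your fallback cannot simply be read off. It claims the Furstenberg--Zimmer induction verifies the SZ property with constants depending only on $\mu(A)$. In fact the compact-extension step gives a bound that depends on the finite-rank (almost periodic) approximation of $1_A$, not on $\mu(A)$ alone. The transfinite induction through inverse limits also carries no explicit constants. And, as you observed, weak$^*$ compactness of invariant measures on $\{0,1\}^{\mathbb{Z}}$ fails because the limit functional is not lower semicontinuous.

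\textbf{The missing idea.} Apply compactness where the bad condition is \emph{closed}, namely to sets containing no progressions at all. Then recover a quantitative count by averaging.

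First, Furstenberg's theorem plus a correspondence argument gives a finite $N_0=N_0(k,\delta)$ with the following property: every $B\subseteq\{1,\dots,N_0\}$ with $|B|\ge\delta N_0/2$ contains a nontrivial $k$-AP. Otherwise there are AP-free $B_j\subseteq[1,N_j]$ with $N_j\to\infty$. Weak$^*$ limits of their empirical measures give a system with $\mu(A)\ge\delta/2$ and $\mu(\bigcap_i T^{-in}A)=0$ for all $n\neq 0$, a contradiction.

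Second, run a Varnavides-type averaging. Fix $b\ge 1$. The set $\{j\le N_0: T^{bj}x\in A\}$ has size at least $\delta N_0/2$ for $x$ in a set of measure at least $\delta/2$. Integrating gives $\sum_{r=1}^{N_0}\mu(\bigcap_i T^{-ibr}A)\ge \delta/(2N_0)$. Now sum over $b\le M$; each $n$ arises as $br$ with $r\le N_0$ at most $N_0$ times. This gives $\liminf_N\frac1N\sum_{n\le N}\mu(\bigcap_i T^{-in}A)\ge \delta/(2N_0^3)$, uniformly over all systems. With this in place, your remaining two steps complete the proof.
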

To be precise, Furstenberg's original proof only establishes that \eqref{lowerbound} is non-empty (in fact syndetic) when $c=0$, while the existence of a positive constant $c>0$ depending only on $k$ and $\delta$ was later observed by Bergelson et al.\ in \cite{Bergelson2000aspects}. Furstenberg's result is a significant strengthening of Szemer\'edi's theorem. The original theorem guarantees a non-empty intersection, whereas Theorem \ref{FS} guarantees an intersection with positive density. 

The \emph{Furstenberg constant} is the optimal (largest) constant satisfying \eqref{lowerbound}. Formally, and more generally, we define the \emph{Furstenberg constant} as follows.
\begin{definition}[The Furstenberg constants]\label{F:constant}
    Let $k\ge 1$, let $\delta\in[0,1]$ and let $\Gamma$ be a countable abelian group. The \emph{Furstenberg} constant $c=c(k,\delta,\Gamma)$ is defined by the formula
    $$c(k,\delta,\Gamma) = \inf_{\Phi} \inf_{d_{\Phi}(E)\ge \delta} \sup_{\gamma\in \Gamma\setminus\{0\}} d_{\Phi}\left(\bigcap_{i=0}^{k-1}(E-i\gamma)\right),$$ where the infimum is over all F\o lner sequences $\Phi=(\Phi_N)_{N\in\mathbb{N}}$ for $\Gamma$ and all $E\subseteq \Gamma$. Equivalently, $c$ is the largest constant such that
    $$\left\{\gamma\in \Gamma\setminus\{0\} : d_{\Phi}\left(E\cap (E-\gamma)\cap\dots\cap (E-(k-1)\gamma)\right) \ge c(k,\delta,\Gamma)-\varepsilon\right\}\neq  \emptyset$$ for every $\varepsilon>0$,  whenever $d_{\Phi}(E)\ge \delta$.
\end{definition}
The exact dependence of $c(k,\delta,\mathbb{Z})$ on $\delta$ is not currently known in general. An interesting upper bound is $c(k,\delta,\mathbb{Z})\le \delta^k$. This bound can be obtained by considering a \emph{random} subset $E$ (see Lemma \ref{randombound}, and Appendix \ref{randomsec} for the exact formulation). In this case, we almost surely have $d_\Phi\left(\bigcap_{i=0}^{k-1}(E-i\cdot n)\right)=\delta^k$, for all $k$ and all $n\in\mathbb{Z}\setminus\{0\}$. The upper bound $\delta^k$ is optimal for $k=1,2$, with $c(1,\delta,\mathbb{Z})=\delta$ being obvious, and $c(2,\delta,\mathbb{Z}) = \delta^2$ essentially due to Khintchine \cite{khintchine}. However, adapting Behrend's construction, it can be shown that $c(3,\delta,\mathbb{Z})<\delta^{-t\cdot \log \delta}$ for arbitrarily small values of $\delta$, for some absolute constant $t>0$. Unfortunately, for small values of $\delta$, this upper bound is strictly smaller than any polynomial power of $\delta$, let alone $\delta^3$. In \cite{BHK} Bergelson, Host, and Kra considered a different constant. Let $c^*(k,\delta,\mathbb{Z})$ denote the optimal lower bound for \eqref{lowerbound}, defined precisely as $c(k,\delta,\mathbb{Z})$ is defined in Definition \ref{F:constant}, but where every instance of $d_{\Phi}$ is replaced with $d^*$. Bergelson, Host, and Kra showed that $c^*(3,\delta,\mathbb{Z})=\delta^3$ and $c^*(4,\delta,\mathbb{Z})=\delta^4$, but this pattern fails to generalize and at $k \ge 5$ we again get the upper bound $c^*(5,\delta,\mathbb{Z})<\delta^{-t\log \delta}$, for some absolute constant $t>0$. The cases $k=3,4$ of this result were later generalized to the setting of vector spaces over finite fields by Bergelson, Tao and Ziegler \cite{btz2}, and to general abelian groups under certain assumptions \cite{shalom2,ABB,ABS}.
\subsection{Furstenberg and Katznelson constant}\label{FKconstant:sec}
Let $\Gamma=(\Gamma,+)$ be an abelian group and $\gn$ be a sequence in $\Gamma$. The IP set generated by $\gn$ is defined to be the multiset of all finite sums of distinct elements from $\gn$: $$\IP\big(\gn\big) = \left\{\sum_{n \in F} \gamma_n : F \subseteq \mathbb{N}, |F| < \infty \right\},$$ with the convention that the sum over the empty set is $0$.
\begin{example}[Example in the integers]
Let $\Gamma = \mathbb{Z}$ and $\gamma_n = 10^{n-1}$. Then $$\IP\big(\gn\big) = \{0,1,10,11,100,101,110,111,1000,\dots\}$$ is exactly the set of all natural numbers whose decimal representation contains only the digits $0$ and $1$.
\end{example}

\begin{example}[Example over finite fields] Let $\Gamma = \mathbb{F}_p^\omega$ denote the countable-dimensional vector space over $\mathbb{F}_p$ (i.e., the direct sum), and let $\gamma_n = e_n$ denote the $n^{\mathrm{th}}$ element in the standard basis. Then,
$$\IP\big(\gn\big) = \left\{ \sum_{n \in F} e_n : F \subseteq \mathbb{N}, |F| < \infty \right\}.$$ This is the set of all vectors in $\mathbb{F}_p^\omega$ (with finite support) whose coordinates are all either $0$ or $1$.
\end{example}
A subset $A\subseteq\Gamma$ is called $\IP^*$ if for every $\IP$ we have a non-trivial intersection $A\cap (\IP\setminus \{0\}) \neq \emptyset$.\footnote{This technical definition is necessary, because we will adopt the convention that $\IP\setminus \{0\}$ may still contain $0$ if the $\IP$, which is a multiset, contains $0$ more than once.} In \cite{FK85}, Furstenberg and Katznelson proved the IP Szemer\'edi theorem in the integers, which asserts that every subset of the integers with positive upper Banach density contains arbitrarily long arithmetic progressions whose common difference lies in any pre-determined $\IP$. Later, Furstenberg and Katznelson \cite{DHJ3,DHJ} proved the density Hales-Jewett theorem, which can be used to derive an analogue of the IP Szemer\'edi theorem over finite fields.
\begin{proposition}[IP Szemer\'edi theorem over finite fields]
    Let $p$ be a fixed prime and let $V=\mathbb{F}_p^\omega$ denote the countable-dimensional vector space over the finite field $\mathbb{F}_p$ (i.e., the direct sum $\bigoplus_{i\in\omega} \mathbb{F}_p$). Let $\gn\subseteq V$ be an infinite sequence, and let $k\ge 1$. Then for every subset $E\subseteq V$ of positive upper Banach density, there exists a $k$-term arithmetic progression in $E$ whose common difference lies in $\IP\left(\gn\right)\setminus \{0\}$.
\end{proposition}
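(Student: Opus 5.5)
Since $k$-APs in $V$ have period dividing $p$, it suffices to treat $k\le p$; for $k>p$ the progression $a,a+d,\dots,a+(k-1)d$ only repeats the points $a,\dots,a+(p-1)d$. The plan is to derive the statement from the density Hales--Jewett theorem of Furstenberg and Katznelson \cite{DHJ3,DHJ}. In its finitary form it says: for every $\delta>0$ and alphabet size $p$ there is $M=M(\delta,p)$ such that every subset of $[p]^M=\{0,1,\dots,p-1\}^M$ of density at least $\delta/2$ contains a combinatorial line. We will locate a suitable copy of the cube $\{0,\dots,p-1\}^M$ inside $V$ on which $E$ has density at least $\delta/2$.

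First, fix a F\o lner sequence $\Phi$ with $d_\Phi(E)>\delta>0$ and set $M=M(\delta,p)$. Define the affine map $\phi_x:\{0,\dots,p-1\}^M\to V$ by $\phi_x(\epsilon)=x+\sum_{j=1}^M\epsilon_j\gamma_j$. If the images $\phi_0$ of the cube are not injective, the conclusion is easier, and this case is handled separately as discussed below. The key averaging step is
$$\frac{1}{|\Phi_N|}\sum_{x\in\Phi_N}\frac{|\phi_x^{-1}(E)|}{p^M}=\E_{\epsilon}\frac{|(E-\phi_0(\epsilon))\cap\Phi_N|}{|\Phi_N|}.$$
By the F\o lner property \eqref{folner}, applied to the finitely many shifts $\phi_0(\epsilon)$, the right-hand side is $\ge\delta-o(1)$ along a subsequence of $N$ realizing the $\limsup$ in \eqref{density}. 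Hence for some $x$ the pullback $\phi_x^{-1}(E)$ has density at least $\delta/2$ in the cube. This pullback is a multiset-aware count, so it remains valid even if $\phi_x$ is not injective.

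Second, we apply density Hales--Jewett to obtain a combinatorial line in $\phi_x^{-1}(E)$. Such a line consists of a nonempty set of wildcard coordinates $F\subseteq\{1,\dots,M\}$ and fixed values on the other coordinates, and its points are obtained by setting the wildcards equal to $t=0,1,\dots,p-1$. The image of the line under $\phi_x$ is $a+t\cdot d$ with $d=\sum_{j\in F}\gamma_j\in\IP(\gn)\setminus\{0\}$, where the latter is understood as a multiset, consistent with the paper's footnote convention. Restricting to $t=0,\dots,k-1$ gives the desired $k$-AP in $E$.

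The main obstacle is being honest about what the statement demands when $d=0$ as an element of $V$, for instance when some $\gamma_j=0$ or when sums cancel. Under the stated multiset convention, $\IP\setminus\{0\}$ only removes the empty sum, so a nonempty $F$ is exactly what is required and the argument is complete. If a genuinely nonzero $d$ is desired, we would first pass to a subsequence of $\gn$ that avoids cancellations, or to a sub-IP of block sums, chosen so that all nonempty finite sums are nonzero. This is possible when infinitely many $\gamma_n$ span an infinite-dimensional subspace. If instead every tail spans a finite-dimensional space, then by pigeonhole some sub-IP contains $0$ in infinitely many ways, and the multiset convention makes the statement trivial.
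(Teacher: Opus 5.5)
Your proposal is correct and follows the route the paper has in mind. The paper gives no argument of its own; it only remarks that the statement can be derived from the Furstenberg--Katznelson density Hales--Jewett theorem (or from Zorin's nilpotent IP Szemer\'edi theorem). Your averaging over translates $x+\sum_{j\le M}\epsilon_j\gamma_j$ of the cube, followed by DHJ over the alphabet $\mathbb{F}_p$, is exactly that standard derivation. You also handle a possibly vanishing difference $d=\sum_{j\in F}\gamma_j$ correctly via the paper's multiset convention for $\IP\setminus\{0\}$, and, as you eventually note, non-injectivity of $\phi_x$ needs no separate case.
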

See also a proof by Zorin \cite{nilpotentIPszemeredi}, which can be applied to more general nilpotent groups.
Unfortunately, neither the Furstenberg and Katznelson argument nor the other proofs of the proposition above tell us much about the quantitative nature of the maximal density of such arithmetic progressions (namely, the Furstenberg-Katznelson constant).
Below, we define this constant in the generality of arbitrary countable abelian groups.
\begin{definition}[Furstenberg-Katznelson constant]\label{FKconstant:def}
    Let $k\ge 1$, let $\delta\in[0,1]$, and let $\Gamma$ be a countable abelian group. The \emph{Furstenberg-Katznelson constant} $c_{\IP}=c_{\IP}(k,\delta,\Gamma)$ is defined by
    $$c_{\IP}(k,\delta,\Gamma) = \inf_{\Phi} \inf_{d_{\Phi}(E)\ge \delta} \inf_{\gn} \sup_{\gamma\in \IP(\gn)\setminus\{0\}} d_{\Phi}\left(\bigcap_{i=0}^{k-1} (E-i\gamma)\right),$$ where the infimum is over all F\o lner sequences, all subsets $E\subseteq \Gamma$, and all sequences $\gn\subseteq \Gamma$. Equivalently, $c_{\IP}$ is the optimal constant such that the set
    $$\left\{\gamma\in \Gamma : d_{\Phi}(E\cap (E-\gamma)\cap\dots\cap (E-(k-1)\gamma)) \ge c_{\IP}(k,\delta,\Gamma)-\varepsilon\right\}$$
    is $\IP^*$, for every $\varepsilon>0$, whenever $d_{\Phi}(E)\ge \delta$. 
\end{definition}
\subsection{Dynamical constants}
Throughout, $\Gamma$ is a countable abelian group. In this section we introduce dynamical constants $c^{\rec}$ and $c_{\IP}^{\rec}$, which correspond to the combinatorial constants $c$ and $c_{\IP}$ via the Furstenberg correspondence principle and its inverse.
\begin{definition}[$\Gamma$-system]
     A \emph{$\Gamma$-system} is a quadruple $\XX=(X,\mathcal{B},\mu,T)$, where $(X,\mathcal{B},\mu)$ is a regular probability space\footnote{Meaning that $X$ is a compact metric space, $\mathcal{B}$ is the Borel $\sigma$-algebra, and $\mu$ is a regular Borel measure.}, and $T:\Gamma\rightarrow \mathrm{Aut}(X,\mathcal{B},\mu)$ is an action of $\Gamma$ on $X$ by measure-preserving transformations $T^\gamma:X\rightarrow X$.  A factor of a $\Gamma$-system $\XX=(X,\mathcal{B}_X,\mu_X,T_X)$ is a $\Gamma$-system $\YY=(Y,\mathcal{B}_Y,\mu_Y,T_Y)$ and a factor map $\pi:X\rightarrow Y$ such that $\pi\circ T_X = T_Y\circ \pi$ a.e., and $\mu_X(\pi^{-1}(A)) = \mu_Y(A)$ for every measurable set $A\subseteq Y$. Given a function $f\in L^2(\mu_X)$, we let $\E(f\mid Y)\in L^2(\mu_Y)$ denote the \emph{conditional expectation} with respect to the factor $Y$; similarly, we let $\E(f\mid \mathcal{B}_Y)\in L^2(\mu_X)$ denote the lift of $\E(f\mid Y)$ to $\XX$ via $\pi$. Every $\Gamma$-system $\XX$ gives rise to a unitary action of $\Gamma$ on $L^2(\mu)$ defined by $\gamma\mapsto T^\gamma$, where here we abuse notation and set $T^\gamma (f) = f\circ T^{\gamma}$. A $\Gamma$-system $\XX$ is called \emph{ergodic} if every invariant function $f\in L^2(\mu)$ is a constant.
\end{definition}
 The Furstenberg correspondence principle was introduced by Furstenberg (e.g.,~in \cite{furstenberg1977ergodic}) and is often used to solve combinatorial questions with ergodic-theoretic tools. Below, we formulate a general version of the correspondence principle for actions of arbitrary countable abelian groups.
\begin{theorem}[The Furstenberg correspondence principle]\label{FCP}
    Let $\Gamma$ be a countable abelian group, let $\Phi=(\Phi_N)_{N\in\mathbb{N}}$ be a F\o lner sequence, and let $E\subseteq \Gamma$. Then there exists a $\Gamma$-system $\XX = (X,\mathcal{B},\mu,T)$ and a measurable set $A\subseteq X$ such that the following holds:
    \begin{itemize}
        \item[(i)] $\mu(A) = d_{\Phi}(E)$.
        \item[(ii)] For every $k\ge 1$ and every $\gamma_0,\gamma_1,\dots,\gamma_k\in \Gamma$, we have\begin{equation}\label{corrinequality}d_{\Phi}\left(\bigcap_{i=0}^k(E-\gamma_i)\right)\ge \mu\left(\bigcap_{i=0}^{k} T^{-\gamma_i}A\right).
        \end{equation}
    \end{itemize}
\end{theorem}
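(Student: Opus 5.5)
We will follow Furstenberg's standard construction, taking the underlying space to be a compact shift space. Let $X=\{0,1\}^\Gamma$ with the product topology; this is a compact metrizable space because $\Gamma$ is countable. Let $\Gamma$ act on $X$ by shifts, $(T^\gamma x)(\eta)=x(\eta+\gamma)$, so that each $T^\gamma$ is a homeomorphism and $T^{\gamma}T^{\gamma'}=T^{\gamma+\gamma'}$. Set $A=\{x\in X: x(0)=1\}$, which is clopen, and let $\omega=1_E\in X$. Then $T^{\gamma}\omega\in A$ if and only if $\gamma\in E$. More generally, for any $\gamma_0,\dots,\gamma_k$,
\[
\eta\in \bigcap_{i=0}^k (E-\gamma_i)
\quad\Longleftrightarrow\quad
T^{\eta}\omega\in \bigcap_{i=0}^k T^{-\gamma_i}A .
\]

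Next, pass to a subsequence $(N_j)$ along which $|E\cap\Phi_{N_j}|/|\Phi_{N_j}|\to d_\Phi(E)$. Consider the empirical measures
\[
\mu_j=\frac{1}{|\Phi_{N_j}|}\sum_{\eta\in\Phi_{N_j}}\delta_{T^\eta\omega}.
\]
By weak-$*$ compactness of probability measures on $X$ (Banach--Alaoglu together with Riesz representation), we may pass to a further subsequence converging to a regular Borel probability measure $\mu$. We check invariance on continuous functions. For $f\in C(X)$ and $a\in\Gamma$,
\[
\Bigl|\int f\circ T^a\,d\mu_j-\int f\,d\mu_j\Bigr|\le \|f\|_\infty\,\frac{|(\Phi_{N_j}+a)\triangle\Phi_{N_j}|}{|\Phi_{N_j}|}\to 0
\]
by the F\o lner property \eqref{folner}. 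Hence $\mu$ is $T$-invariant, and $\XX=(X,\mathcal{B},\mu,T)$ is a $\Gamma$-system.

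Finally, we verify (i) and (ii). The sets $A$ and $B=\bigcap_{i=0}^k T^{-\gamma_i}A$ are clopen, so $1_A$ and $1_B$ are continuous, and weak-$*$ convergence applies directly to them. For (i),
\[
\mu(A)=\lim_j\mu_j(A)=\lim_j |E\cap\Phi_{N_j}|/|\Phi_{N_j}|=d_\Phi(E)
\]
by the choice of subsequence. For (ii),
\[
\mu(B)=\lim_j \frac{\bigl|\bigcap_i(E-\gamma_i)\cap\Phi_{N_j}\bigr|}{|\Phi_{N_j}|}\le \limsup_{N}\frac{\bigl|\bigcap_i(E-\gamma_i)\cap\Phi_N\bigr|}{|\Phi_N|}=d_\Phi\Bigl(\bigcap_{i=0}^k(E-\gamma_i)\Bigr),
\]
which is \eqref{corrinequality}.

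The main point needing care is (i). Equality there, rather than just an inequality, requires choosing the subsequence to realize the $\limsup$ before extracting the weak-$*$ limit. Inequality (ii) then holds automatically, since a limit along a subsequence is bounded by the full $\limsup$. The clopenness of the cylinder sets is what lets weak-$*$ convergence be applied to indicator functions without any boundary issues.
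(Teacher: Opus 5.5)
Your proof is correct and follows essentially the same route as the paper: the shift action on $\{0,1\}^\Gamma$, the point $1_E$, empirical measures along a subsequence realizing the $\limsup$, a weak$^*$ limit point $\mu$, and the cylinder set $\{x(0)=1\}$. The differences are minor. You work on the full shift rather than on the orbit closure of $1_E$, which changes nothing. You also supply two details the paper leaves implicit: the $T$-invariance of $\mu$, which follows from the F\o lner property, and the clopenness of the cylinder sets, which justifies passing to the limit on indicator functions.
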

\begin{proof}
    The proof in this generality is given in Joel Moreira's math blog \cite{Moreirablog}. For the sake of completeness, we include it here. Consider the compact space $X_0 = \{0,1\}^\Gamma$ (with respect to the product topology), and let $\zeta\in X_0$ denote the characteristic function of $E$. The group $\Gamma$ acts on $X_0$ by shifts (i.e., $T^\gamma x(\gamma') = x(\gamma+\gamma')$). We let $X$ denote the orbit closure of $\zeta$ under the action of $\Gamma$. Let $N_j$ be a sequence such that 
    $$d_{\Phi}(E) = \lim_{j\rightarrow\infty} \frac{|E\cap \Phi_{N_j}|}{|\Phi_{N_j}|}.$$
We equip $X$ with the induced topology and the Borel $\sigma$-algebra. For each $j\ge 1$, we define a probability measure $\mu_j = \E_{\gamma\in \Phi_{N_j}} \delta_{T^\gamma \zeta}$ on $\XX$, where $\delta_{T^\gamma \zeta}$ is the Dirac measure supported at $T^\gamma \zeta$. By the Banach-Alaoglu theorem, the sequence $\mu_j$ admits a convergent subsequence $\mu_{j_{\ell}}$ in the weak$^*$ topology, whose limit we denote by $\mu$. Let $A\subseteq X$ denote the cylinder set containing all sequences $x\in X$ such that $x(0)=1$. Hence,
\begin{equation}\label{characteristic}
    T^\gamma \zeta\in A \iff \zeta(\gamma)=1\iff \gamma\in E.
\end{equation}
We conclude that $$\mu(A) = \lim_{\ell\rightarrow\infty} \mu_{j_{\ell}}(A) = \lim_{\ell\rightarrow\infty} \E_{\gamma\in \Phi_{N_{j_{\ell}}}} 1_E(\gamma)=d_{\Phi}(E).$$ This proves $(i)$. For $(ii)$, observe that from \eqref{characteristic} we have
\begin{align*}
  d_{\Phi}\left(\bigcap_{i=0}^k (E-\gamma_i)\right) &\ge \lim_{\ell\rightarrow\infty}   \frac{\left|\Phi_{N_{j_{\ell}}}\cap \bigcap_{i=0}^k (E-\gamma_i)\right|}{|\Phi_{N_{j_{\ell}}}|} \\&= \lim_{\ell\rightarrow\infty}\mu_{j_{\ell}}\left(\bigcap_{i=0}^{k} T^{-\gamma_i}A\right) = \mu\left(\bigcap_{i=0}^{k} T^{-\gamma_i}A\right).
\end{align*}

\end{proof}
This theorem tells us that both $c$ and $c_{\IP}$ are closely related to the following dynamical constants.
\begin{definition}[The recurrence constants]\label{dynamical:constants}
    Let $k\ge 1$, let $\delta\in[0,1]$, and let $\Gamma$ be a countable abelian group. 
    \begin{itemize}
        \item[(i)] Let $c^\rec=c^\rec(k,\delta,\Gamma)$ denote the constant
        $$c^{\rec}(k,\delta,\Gamma) = \inf_\XX \inf_{\mu(A)\ge \delta} \sup_{\gamma\in \Gamma\setminus \{0\}} \mu\left(\bigcap_{i=0}^{k-1} T^{-i\gamma} A\right),$$ where the infimum is over all $\Gamma$-systems $\XX=(X,\mathcal{B},\mu,T)$ and all measurable sets $A\subseteq X$. Equivalently, $c^{\rec}$ is the optimal constant such that $$\left\{\gamma\in \Gamma\setminus \{0\} : \mu(A\cap T^{-\gamma}A\cap\dots\cap T^{-(k-1)\gamma} A) \ge c^{\rec}(k,\delta,\Gamma)-\varepsilon\right\} \neq \emptyset$$ for every $\varepsilon>0$, whenever $A$ is a measurable subset of a $\Gamma$-system $\XX=(X,\mathcal{B},\mu,T)$ with $\mu(A)\ge \delta$.
        \item[(ii)] Let $c^\rec_{\IP}=c_{\IP}^{\rec}(k,\delta,\Gamma)$ denote the constant
        $$c_{\IP}^{\rec}(k,\delta,\Gamma) = \inf_{\XX} \inf_{\mu(A)\ge \delta} \inf_{\gn} \sup_{\gamma\in\IP(\gn)\setminus\{0\}} \mu\left(\bigcap_{i=0}^{k-1} T^{-i\gamma} A\right),$$ where the infimum is over all $\Gamma$-systems $\XX=(X,\mathcal{B},\mu,T)$, all measurable sets $A\subseteq X$, and all sequences $\gn\subseteq \Gamma$. Equivalently, $c_{\IP}^{\rec}$ is an optimal constant such that the set
        $$\left\{\gamma\in \Gamma : \mu(A\cap T^{-\gamma} A\cap\dots\cap T^{-(k-1)\gamma} A)\ge c_{\IP}^{\rec}(k,\delta,\Gamma)-\varepsilon\right\}$$ is $\IP^*$ for all $\varepsilon>0$, whenever $A$ is a measurable subset of a $\Gamma$-system $\XX=(X,\mathcal{B},\mu,T)$ with $\mu(A)\ge \delta$. 
    \end{itemize}
\end{definition}
From Theorem \ref{FCP} and the definition, the following inequalities are immediate: $c_{\IP}^{\rec} \le c_{\IP}\le c$ and $c_{\IP}^{\rec}\le c^{\rec}\le c$.  However, we also have the following inverse of the Furstenberg correspondence principle (see \cite{Martin} for this formulation, and \cite{Sohail,Fish} for alternative formulations).
\begin{theorem}\label{inversecorrespondence}
    Let $\Gamma=(\Gamma,+)$ be a countable abelian group, and let $\Phi=(\Phi_N)_{N\in\mathbb{N}}$ be a F\o lner sequence. For every $\Gamma$-system $\XX=(X,\mathcal{B},\mu,T)$ and every $A\in\mathcal{B}$, there exists a subset $E\subseteq \Gamma$ such that for all $k\in\mathbb{N}$ and $\gamma\in \Gamma$ we have 

    $$\mu\left(\bigcap_{i=0}^{k-1}T^{-i\gamma}A\right) = d_{\Phi}\left(\bigcap_{i=0}^{k-1} (E-i\gamma)\right).$$
\end{theorem}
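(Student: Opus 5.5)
The plan is to build $E$ by \emph{random sampling along the orbit}. By regularity of $\mu$ we may assume $X$ is a compact metric space, and we work with the countable family of sets $A_{\gamma,k}:=\bigcap_{i=0}^{k-1}T^{-i\gamma}A$, for $k\in\mathbb{N}$ and $\gamma\in\Gamma$. The goal is a single point $x\in X$ such that, with $E=E_x:=\{g\in\Gamma : T^g x\in A\}$,
$$\lim_{N\to\infty}\frac{1}{|\Phi_N|}\sum_{g\in\Phi_N}1_{A_{\gamma,k}}(T^gx)=\mu(A_{\gamma,k})\quad\text{for all }k,\gamma.$$
Since $g\in\bigcap_{i}(E-i\gamma)$ if and only if $T^{g+i\gamma}x\in A$ for all $i$, i.e. $T^g x\in A_{\gamma,k}$, this limit would give the required identity for $d_\Phi$ (even with the limsup being a genuine limit).

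\textbf{Step 1.} The obstruction is that $\Phi$ is an arbitrary F\o lner sequence, not necessarily tempered, and the system need not be ergodic, so a pointwise ergodic theorem along $\Phi$ is unavailable. I would therefore replace the single orbit by a randomized construction: choose points $x_g$ independently for $g\in\Gamma$, each distributed according to $\mu$, and set $E=\{g : T^{g}x_g\in A\}$. Wait—this destroys the correlations between the events at $g$ and $g+i\gamma$. Instead I would use independence only at large scale. Pass to a subsequence-free scheme: tile $\Gamma$ (or approximately tile each $\Phi_N$) by F\o lner-type blocks $F_j$ (in countable abelian groups one can use Ornstein--Weiss quasitilings), choose independent $\mu$-random points $x_j$, and on block $F_j$ (with center $c_j$) define $E\cap F_j=\{g\in F_j : T^{g-c_j}x_j\in A\}$.

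\textbf{Step 2.} For fixed $(k,\gamma)$, the count of $g\in\Phi_N$ with $g,g+\gamma,\dots,g+(k-1)\gamma$ in the same block and $T^{g-c_j}x_j\in A_{\gamma,k}$ has expectation $\approx|\Phi_N|\mu(A_{\gamma,k})$ by invariance of $\mu$. Boundary terms, meaning those $g$ for which $g+i\gamma$ leaves its block, are negligible once the blocks are F\o lner with respect to the finite set $\{i\gamma\}$. The blocks therefore need to become increasingly invariant, with size growing slowly relative to $\Phi_N$.

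\textbf{Step 3.} The contributions of distinct blocks are independent and bounded by the block size. A Hoeffding/Chebyshev bound, together with Borel--Cantelli along $N$, gives almost sure convergence for each of the countably many pairs $(k,\gamma)$. This requires $|\Phi_N|\to\infty$ quickly enough relative to block size; if it does not, I would pass to $\Phi_N$ with $|\Phi_N|\to\infty$, which holds unless $\Gamma$ is finite, a case that is handled trivially by $\mu$-averaging. Intersecting the resulting countably many full-measure events yields a realization $E$.

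The main obstacle is Step 1/2: arranging the quasitiling so that $\Phi_N$ is almost exactly covered by blocks that are much smaller than $\Phi_N$ yet increasingly invariant, uniformly in $N$. I would address this with Ornstein--Weiss quasitiling applied to a sequence of scales chosen diagonally against $\Phi$.
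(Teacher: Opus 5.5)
The paper does not prove this theorem; it quotes it from the literature (\cite{Martin}). Taken on its own terms, your proposal has a genuine gap. It sits in Step~3, not in the tiling problem you flag.

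Because $d_\Phi$ is a $\limsup$ over \emph{all} $N$, you need the upper bound for every large $N$, so you may neither discard indices nor pass to a subsequence. Borel--Cantelli along $N$ then needs roughly $\sum_N \exp(-c\varepsilon^2|\Phi_N|/b_N)<\infty$, where $b_N$ bounds the blocks meeting $\Phi_N$. This fails as soon as $|\Phi_N|$ grows like $\log N$ or slower, and an arbitrary F\o lner sequence is free to do that. The condition $|\Phi_N|\to\infty$ you fall back on is not the relevant one.

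This is not a technicality: independent sampling genuinely fails. Take $\Gamma=\mathbb{Z}$, the Bernoulli$(\frac12,\frac12)$ shift, and $A=\{x: x(0)=1\}$. For any partition into blocks, $E\cap F_j=\{g\in F_j : x_j(g-c_j)=1\}$ reads off distinct coordinates of independent $\mu$-random points, so $E$ is an i.i.d.\ Bernoulli$(\frac12)$ subset of $\mathbb{Z}$. Let $\Phi$ list, for $M=1,2,\dots$, the $4^M$ disjoint intervals $[jM,(j+1)M)$, $0\le j<4^M$; this is a F\o lner sequence. The probability that none of these intervals lies inside $E$ is $(1-2^{-M})^{4^M}\le e^{-2^M}$, which is summable in $M$. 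Hence almost surely $d_\Phi(E)=1\neq\frac12=\mu(A)$, whatever tiling you chose. The same works in $\mathbb{F}_p^\omega$ using $4^{p^M}$ disjoint cosets of $\mathbb{F}_p^M$.

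What is missing is control over all sufficiently invariant finite sets simultaneously, which requires a structured rather than random $E$. The natural tool is periodicity: if $Q$ is invariant under a finite-index subgroup $W$, its density along \emph{every} F\o lner sequence equals its density in $\Gamma/W$.

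In $\mathbb{F}_p^\omega$, for finitely many patterns, write $\Gamma=V\oplus H\oplus W$ with $V$ containing the relevant $\gamma$'s and $V\oplus H$ finite, and take $\{v+h+w : T^v x_h\in A\}$ for suitable points $(x_h)_{h\in H}$. Randomness is harmless here because it only selects finitely many points. Each $\frac{1}{|V|}\sum_{v\in V}1_{A_{\gamma,k}}(T^v x)$ has $\mu$-mean exactly $\mu(A_{\gamma,k})$, so a second-moment bound over independent $x_h$ gives all quotient densities to within $\varepsilon$ once $|H|$ is large. These periodic sets are then glued along an exhausting chain of finite subspaces chosen adaptively to $\Phi$. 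Such subspaces have no boundary for translations by their own elements. Groups without finite-index subgroups, such as $\mathbb{Q}$, would need a different substitute.

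Your scheme does work for a well-behaved sequence such as $\Phi_N=\mathbb{F}_p^N$, taking as blocks the cosets of slowly growing subspaces inside each layer $\mathbb{F}_p^{N+1}\setminus\mathbb{F}_p^N$. A single such F\o lner sequence is all the paper actually uses, since $c$ and $c_{\IP}$ are infima over $\Phi$. That is still weaker than the stated theorem.

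Finally, finite $\Gamma$ is not trivial. There $\Phi_N=\Gamma$ eventually, so $d_\Phi$ only takes values in $\frac{1}{|\Gamma|}\mathbb{Z}$, and the statement fails when, say, $\mu(A)=1/\sqrt{2}$. The theorem tacitly assumes $\Gamma$ is infinite.
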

From Theorem \ref{inversecorrespondence}, we deduce that $c_{\IP}^{\mathrm{rec}}=c_{\IP}\le c=c^{\rec}$. Our primary goal is to study these constants in the case where $\Gamma$ is the infinite-dimensional vector space over some finite field $\mathbb{F}_p$. 
\section{Main results}\label{results}
We provide both qualitative and quantitative results.
\subsection{Qualitative results}
Suppose that $\Gamma=V$ is a vector space over some finite field $\mathbb{F}_p$, for some prime $p>2$ that is fixed throughout this section (the case $p=2$ is trivial). For $3\le k \le p$, our qualitative result shows that all of the constants  $c,c_{\IP},c^{\rec}$ and $c_{\IP}^{\rec}$ are equal to some convex density function (in particular, the Furstenberg constant is equal to the Furstenberg-Katznelson constant). Formally, we introduce a density function $\dAPk:[0,1]\rightarrow[0,1]$, which counts \emph{Hall-Petresco progressions}. These progressions appear naturally in the work of Bergelson, Tao, and Ziegler \cite{btz2}, where they are used to derive a limit formula for multiple ergodic averages in the context of vector spaces over finite fields. The Hall-Petresco groups were defined in \cite[Definition 1.7]{btz2}. A function $P:\mathbb{Z}\rightarrow U$ into an abelian group $U$ is called a polynomial of degree $<k$ if it takes the form $P(n) = \sum_{i=0}^{k-1} \binom{n}{i}\cdot u_i$, where $u_i\in U$ for all $i=0,\dots,k-1$.
\begin{definition}[Hall-Petresco groups]
    Let $p$ be a prime, and let $U_1,\dots,U_m$ be compact abelian $p$-torsion   groups\footnote{Throughout, all groups are implicitly assumed to be metrizable.} for some $0 \le m < p$. Let $1 \le k < p$, and let $c_0,\dots,c_k\in \mathbb{F}_p$ be distinct. The Hall-Petresco group
    $$\mathrm{HP}_{c_0,\dots,c_k}(U_1,\dots,U_m)$$ 
    is the closed subgroup of $(U_1\times\dots\times U_m)^{k+1}$ consisting of tuples of the form $(P(c_i))_{i=0}^k$, where $P=(P_1,\dots,P_m):\mathbb{Z}\rightarrow U_1\times\dots\times U_m$, and for each $1 \le j \le m$, $P_j : \mathbb{Z} \rightarrow U_j$ is a polynomial of degree $<j+1$.
\end{definition}
Let $\mathcal{U}=U_1\times\dots\times U_m$ and for every $j$ set $\mathcal{U}_j = \prod_{i\ge j} U_i$ and $\mathcal{U}_0=\mathcal{U}$. The most important example is the group $$\mathrm{HP}_{0,1,2,\dots,k}(U_1,\dots,U_m) = \left\{\left(u_0,u_0+u_1,\dots,u_0 +ku_1 + \binom{k}{2}u_2+\dots+\binom{k}{m}u_m\right) : \forall_i \text{ } u_i\in \mathcal{U}_i \right\},$$ with the convention that $\binom{k}{m}=0$ whenever $m>k$. Sometimes we will also be interested in the \emph{trivial} Hall-Petresco subgroup
$$\mathrm{HP}_{0,0,\dots,0}(U_1,\dots,U_m) =\{(u,u,\dots,u) : u\in \mathcal{U}\}$$ which consists of the trivial progressions of length $k+1$.
Since the sum of polynomials is a polynomial, it is easy to verify that the Hall-Petresco group is a compact abelian group. Therefore, it admits a Haar measure $\mu_{\mathrm{HP}_{0,1,2,\dots,k}(U_1,\dots,U_m)}$. We can now define our main density function.
\begin{definition}[Hall-Petresco progressions density function]\label{dAPk:def}
    Let $U_1,\dots,U_m$ be compact abelian $p$-torsion groups. For every measurable $f:U_1\times U_2\times\dots\times U_m\rightarrow [0,1]$, we define
    $$S_{U_1,\dots,U_m}^k(f) := \int_{\mathrm{HP}_{0,1,\dots,k-1}(U_1,\dots,U_m)} \bigotimes_{i=0}^{k-1} f\,d\mu_{\mathrm{HP}_{0,1,\dots,k-1}(U_1,\dots,U_m)}.$$
    For a fixed $\delta\in[0,1]$, we let $\dAPk(\delta)$ denote the infimum of $S_{U_1,\dots,U_m}^k(f),$ ranging over all compact abelian $p$-torsion groups $U_1,\dots,U_m$, and all $f:U_1\times U_2\times\dots\times U_m\rightarrow[0,1]$ with $\int_{U_1\times\dots\times U_m} f\,d\mu_{U_1\times\dots\times U_{m}}\ge \delta$. 
\end{definition}
\begin{remark}
In the definition above we allow $f$ to be an arbitrary measurable function with values in $[0,1]$. This freedom will be useful somewhere later in our proof. However, we note that in Lemma \ref{function-set} below we show that the value of $\dAPk(\delta)$ remains the same even if we assume that $f$ must be a characteristic function of a subset of $U_1\times\dots\times U_m$.
\end{remark}

The exact dependence of $\dAPk$ on $\delta$ is not known (see Question \ref{HPquantitative:question}). However, in the special case where $k=3$, Hall-Petresco progressions are $3$-APs and bounds are known when the groups $U_1,\dots,U_m$ in the definition above are finite. 

In section \ref{proof}, we will show that the constants $c_{\IP}, c^{\rec}, c_{\IP}^{\rec}$ and $c$ are all convex in $\delta$, but we do not know if this holds for $\dAPk$ (see Question \ref{convex:question}). Thus, we will work with the lower convex envelope $\cdAPk$ of $\dAPk$. Namely, \begin{equation}\label{convex}
    \cdAPk(\delta) = \sup\{f(\delta):f \text{ is convex and }f\le \dAPk  \text{ on }[0,1]\}
\end{equation} is the largest convex function that is pointwise less than or equal to $\dAPk$. 

We are ready to formulate our main qualitative result.
\begin{theorem}[Qualitative correspondence for the Furstenberg and Furstenberg-Katznelson constants]\label{qualitativemain}
    For every $k\ge 3$ and every $\delta\in[0,1]$, we have
    \begin{equation}\label{inequality:main}
       c_{\IP}^{\rec}(k,\delta,\mathbb{F}_p^\omega)= c_{\IP}(k,\delta,\mathbb{F}_p^\omega)= c^{\rec}(k,\delta,\mathbb{F}_p^\omega) =c(k,\delta,\mathbb{F}_p^\omega)= \cdAPk(\delta).
    \end{equation}
\end{theorem}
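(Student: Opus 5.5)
Since the paper already gives $c_{\IP}^{\rec}=c_{\IP}\le c=c^{\rec}$ (Theorems \ref{FCP} and \ref{inversecorrespondence}), the plan is to close the chain with two inequalities:
\[
c^{\rec}(k,\delta,\mathbb{F}_p^\omega)\le \cdAPk(\delta)\qquad\text{and}\qquad c_{\IP}^{\rec}(k,\delta,\mathbb{F}_p^\omega)\ge \cdAPk(\delta).
\]

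\textbf{Upper bound.} We build extremal systems from Hall-Petresco data. Given compact abelian $p$-torsion groups $U_1,\dots,U_m$ and $f$ with $\int f\ge\delta$, we construct an ergodic $\mathbb{F}_p^\omega$-system: a Host-Kra/Bergelson-Tao-Ziegler type nilsystem (a $p$-torsion Weyl system) on $U_1\times\dots\times U_m$. We choose it so that for every $\gamma\ne0$ the $k$-tuple $(x,T^\gamma x,\dots,T^{(k-1)\gamma}x)$ is equidistributed on $\mathrm{HP}_{0,1,\dots,k-1}(U_1,\dots,U_m)$, or at least so that these correlation integrals equal $S^k_{U_1,\dots,U_m}(f)$. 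A skew-product with generic polynomial cocycles, in which each generator acts by a "generic" polynomial, should achieve this. The function $f$ is then replaced by a set of measure $\int f$ in an extension $X\times[0,1]$, using the trick $A=\{(x,t):t<f(x)\}$ after making the $[0,1]$ coordinates independent along the action; alternatively we use Lemma \ref{function-set}. This gives $c^{\rec}\le \dAPk$. Convexity of $c^{\rec}$ in $\delta$ then yields $c^{\rec}\le\cdAPk$. We prove convexity via disjoint unions: take a convex combination $\lambda\XX_1+(1-\lambda)\XX_2$ with sets of measure $\delta_1,\delta_2$.

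\textbf{Lower bound.} This is the main work. Fix $\XX$, $A$ with $\mu(A)\ge\delta$, and an IP sequence $\gn$.
\begin{enumerate}
\item We define IP ergodic averages $\EIP$, averaging $\prod_i T^{i\gamma}1_A$ over finite sums of $\gamma_n$ with $n\le N$, after passing to a sub-IP so that the limits exist (Hindman/Milliken-Taylor or an IP-limit argument).
\item We show these averages are controlled by the Host-Kra type characteristic factor of order $<k-1$, i.e.\ the $Z^{<k-1}$ factor. The tool is van der Corput along IP sets, adapted from \cite{SK}.
\item Using the Bergelson-Tao-Ziegler structure theory \cite{btz2}, we identify, for ergodic $\XX$, the limit as an integral of $\bigotimes_{i} \E(1_A\mid Z)$ over a Hall-Petresco group. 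Here the IP structure forces the differences to behave like generic elements. The ergodic decomposition then gives the limit as $\int S^k(f_\omega)\,d\omega$ with $\int f_\omega=\delta_\omega$ and $\int\delta_\omega\,d\omega\ge\delta$.
\item By convexity and monotonicity of $\cdAPk$ together with Jensen's inequality, this limit is at least $\cdAPk(\delta)$. Since the IP average over nonzero elements, with the $\gamma=0$ term negligible, is at least this, some $\gamma\in\IP\setminus\{0\}$ achieves $\ge\cdAPk(\delta)-\varepsilon$.
\end{enumerate}

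The expected obstacle is step 3. There we must show the IP limit of $\prod_i T^{i\gamma}g_i$ for functions measurable with respect to the Abramov/Weyl-type characteristic factor equals the Hall-Petresco integral, which requires a limit formula in BTZ nilsystems for IP averages. An additional difficulty is that the factor is only an inverse limit of finite-step systems, so $f$ lives on possibly non-abelian-looking extensions that must be reduced to products $U_1\times\dots\times U_m$. My first attempt would approximate by totally disconnected Weyl systems, where polynomial orbits along IP sums can be computed explicitly via binomial coefficients. After passing to a sub-IP, I would use that the IP limits of polynomial cocycle values become independent generic elements of the $U_j$.
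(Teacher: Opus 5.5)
Your outline (correspondence principles, convexity via disjoint unions, van der Corput along IPs, Bergelson--Tao--Ziegler structure, Jensen) parallels the paper, but both inequalities have a genuine gap.

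\textbf{Lower bound.} Step 3 rests on a false claim: the IP structure does \emph{not} make the differences generic. The obstruction is the \emph{junta factor} $\mathrm{J}_{\gn}(\XX)$. It is generated by eigenfunctions whose eigenvalue $\lambda$ satisfies $\lambda(\gamma_n)=1$ for all but finitely many $n$. On this factor the IP averages do not wash out. One has $\lim_N\E_{\gamma\in\IP_{\Phi_N}(\gn)}T^\gamma=\sum_\lambda\omega_\Phi(\lambda)\tilde P_\lambda$ with $\omega_\Phi(\lambda)=\lim_N\prod_{n\in\Phi_N}\frac{1+\lambda(\gamma_n)}{2}$, which is not even a projection. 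Only after deleting the first $D$ generators and letting $D\to\infty$ does one get $\E(\cdot\mid\mathrm{J}_{\gn}(\XX))$.

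For example, take the ergodic system $T^\gamma x=x+\gamma$ on $\mathbb{F}_p^{\mathbb N}$ with $\gamma_n=e_n$ and $A=\{x:x_1\in A_0\}$. Every $\gamma\in\IP(\gn)$ not using $e_1$ has correlation exactly $\mu(A)$, not the progression count $S^k(1_{A_0})$ your formula predicts. Genuine IP-limits along a sub-IP are even more degenerate: if $\phi$ is a homomorphism into a compact group and $\IP\text{-}\lim\phi(\gamma)=u$, then $u=u+u=0$.

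The ergodic decomposition and the Host--Kra factor must be taken for $V_{\gn}=\mathrm{Span}_{\mathbb{F}_p}(\gn)$, not for $V$. On a $V_{\gn}$-ergodic component, write $U_1=\mathrm{J}_{\gn}(\XX_s)\times U_1'$. The correct double limit is then an integral over $\mathrm{HP}_{0,\dots,0}(\mathrm{J}_{\gn}(\XX_s))\times\mathrm{HP}_{0,1,\dots,k-1}(U_1',U_2,\dots)$. That is, progressions are trivial in the junta coordinate and Hall--Petresco only in the complement. So you must also disintegrate over the junta coordinate $x_J$, bound each fibre by $\dAPk\bigl(\int f_{s,x_J}\bigr)$, and apply Jensen there as well. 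This fibring, together with the ergodic decomposition, is why $\cdAPk$ appears.

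Step 2 also needs an argument you have not supplied. Van der Corput along IPs yields seminorms built from relative products over junta factors rather than invariant factors. You must still show that the ordinary $V_{\gn}$ Host--Kra factor is characteristic. The paper does this by identifying the eigenfunctions with eigenvalues trivial on all $\gamma_n$, $n>\ell$, with the invariant factor of the finite-codimension subspace $\mathrm{Span}(\gamma_n:n>\ell)$, and then invoking Leibman's finite-index lemma.

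\textbf{Upper bound.} No ergodic Weyl system can realize $S^k(f)$ at every $\gamma\neq0$. In $U_1\times_{\rho_1}U_2\times\cdots$, $T^\gamma$ acts on $U_1$ by the fixed rotation $\phi(\gamma)$. So the $U_1$-part of $(x,T^\gamma x,\dots,T^{(k-1)\gamma}x)$ has the deterministic common difference $\phi(\gamma)$, whereas in $\mathrm{HP}_{0,1,\dots,k-1}(U_1,\dots,U_m)$ that difference is Haar-distributed. Worse, ergodicity makes $\phi(V)$ dense. Hence some $\gamma\neq0$ have $\phi(\gamma)$ equal to $0$ (if $U_1$ is finite) or arbitrarily close to it. For $f$ depending on $U_1$, the correlation at such $\gamma$ is close to $\int f^k$, typically far above $S^k(f)$. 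Since $c^{\rec}$ takes a $\sup$ over $\gamma\neq0$, the construction fails.

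What you need is a non-ergodic system whose top-order data are random \emph{invariant} parameters, so that each fixed $\gamma\neq0$ already sees Haar-random coefficients. The paper places such parameters in an invariant coordinate $\bm g\in G^{\mathbb N}$; compare the explicit $k=3$ system in the proof of Theorem \ref{quantitative}. A rotation $h\mapsto h+\gamma\cdot\bm g$ only traces arithmetic progressions, so for $k\ge4$ something more is required. A clean choice is the compact group of all $Q=(Q_1,\dots,Q_m):\mathbb{F}_p^\omega\to U_1\times\dots\times U_m$ with $Q_j$ polynomial of degree $\le j$, acted on by translation, with $B=\{Q:Q(0)\in A\}$. For each $\gamma\neq0$, the map $Q\mapsto(Q(i\gamma))_{i<k}$ is a continuous surjective homomorphism onto $\mathrm{HP}_{0,1,\dots,k-1}(U_1,\dots,U_m)$. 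Therefore every $\gamma\neq0$ gives exactly $S^k(1_A)$.

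Your Bernoulli-extension trick for passing from $f$ to a set is fine.
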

This theorem can be viewed as a correspondence principle in the spirit of Furstenberg. It relates the combinatorial problem of counting arithmetic configurations in subsets of a discrete, countable-dimensional vector space $V$ to the problem of measuring Hall-Petresco progressions in compact abelian groups. 
\subsection{Quantitative results}
In some special cases, we are also able to study the dependence of the constants defined above on $\delta$. Simple instances are the cases where $k=1$ and $k=2$. In these cases, we are able to compute all of the constants explicitly.
\begin{lemma}[The simple cases]\label{simple}
    Let $k=1$ or $k=2$. Then for all $\delta\in[0,1]$ and all primes $p>2$ we have,
     $$c_{\IP}^{\rec}(k,\delta,\mathbb{F}_p^\omega)= c_{\IP}(k,\delta,\mathbb{F}_p^\omega)= c^{\rec}(k,\delta,\mathbb{F}_p^\omega) = c(k,\delta,\mathbb{F}_p^\omega) = \cdAPk(\delta) =\delta^k.$$
\end{lemma}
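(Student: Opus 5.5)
We need to prove that for $k=1,2$ all five quantities equal $\delta^k$. The plan is to establish a chain of inequalities. The ordering $c_{\IP}^{\rec}\le c_{\IP}\le c$ and $c_{\IP}^{\rec}\le c^{\rec}\le c$ follows from Theorem \ref{FCP}, and Theorem \ref{inversecorrespondence} gives $c_{\IP}^{\rec}=c_{\IP}\le c=c^{\rec}$. So it suffices to prove two things: a lower bound $c_{\IP}^{\rec}(k,\delta,\mathbb{F}_p^\omega)\ge\delta^k$, and an upper bound $c(k,\delta,\mathbb{F}_p^\omega)\le \delta^k$. Separately we must show $\cdAPk(\delta)=\delta^k$.

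\emph{Upper bound.} We use a random set, as in Lemma \ref{randombound} and Appendix \ref{randomsec}. Choose $E\subseteq \mathbb{F}_p^\omega$ by including each element independently with probability $\delta$, and fix a F\o lner sequence $\Phi$ of subspaces $\mathbb{F}_p^N$. For each fixed $\gamma\ne 0$ the events $\{x+i\gamma\in E\}$, $0\le i\le k-1$, are independent, since $x, x+\gamma$ are distinct when $k=2$ and $p>2$. A law of large numbers argument then gives, almost surely and for all of the countably many $\gamma$ simultaneously, $d_\Phi(E)=\delta$ and $d_\Phi(E\cap(E-\gamma))=\delta^k$. The case $k=1$ is trivial. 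Hence $c\le\delta^k$. The point that needs care is the strong law along F\o lner sets. Because the $|\Phi_N|$ grow exponentially, Chebyshev's inequality plus Borel--Cantelli handles it directly, and independence is only $1$-dependent for pairs.

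\emph{Lower bound.} For $k=1$ the bound $\mu(A)\ge\delta$ is immediate. For $k=2$, let $\gamma_n$ be any sequence. We show that $\sup_{\gamma\in\IP\setminus\{0\}}\mu(A\cap T^{-\gamma}A)\ge \mu(A)^2-\varepsilon$, via the standard IP/Khintchine argument in Hilbert space. Pass to a sub-IP along which the IP-limit $P=\IP\text{-}\lim T^{\gamma}$ exists in the weak operator topology; this uses Hindman's theorem or the Furstenberg--Katznelson IP-convergence lemma for bounded sequences in a compact metrizable space. Since $T^{\alpha\cup\beta}=T^\alpha T^\beta$ for disjoint unions, $P$ is an orthogonal projection onto a subspace containing constants. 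Therefore $\lim\langle T^\gamma 1_A,1_A\rangle=\langle P1_A,1_A\rangle=\|P1_A\|^2\ge \langle 1_A,1\rangle^2=\delta^2$. Some $\gamma$ in the IP with $\gamma\neq0$ therefore achieves at least $\delta^2-\varepsilon$. This step, establishing that $P$ is a projection (idempotent and self-adjoint as a weak limit of unitaries with $P^2=P$ and $\|P\|\le1$), is the main technical point, though it is classical.

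\emph{Computing $\cdAPk$.} For $k=1$, $S^1(f)=\int f\ge\delta$, with equality for $f\equiv\delta$. For $k=2$ the group $\mathrm{HP}_{0,1}(U_1,\dots,U_m)$ is $\{(u_0,u_0+u_1)\}$ with $u_1\in\mathcal U_1=\mathcal U$, so it is all of $\mathcal U^2$. Hence $S^2(f)=(\int f)^2\ge\delta^2$, with equality at constant $f$. So $\dAPk(\delta)=\delta^k$, which is convex, and therefore $\cdAPk(\delta)=\delta^k$. Combining the three parts yields the lemma.
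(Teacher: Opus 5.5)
Your proof is correct, but it reaches both inequalities by a different route than the paper.

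\textbf{Lower bound.} For $c_{\IP}^{\rec}(2,\delta,\mathbb{F}_p^\omega)\ge\delta^2$, the paper never passes to a sub-IP. It uses Proposition \ref{refined}, namely that $\E_{\gamma\in\IP_{\Phi_N^D}(\gn)}T^\gamma 1_A\to\E(1_A\mid \mathrm{J}_{\gn}(\XX))$ as $N\to\infty$ and then $D\to\infty$, and finishes with Jensen's inequality. That argument rests on the spectral theorem for $\mathbb{F}_p^\omega$ and on the junta factor. It also gives an averaged statement: the IP-F\o lner averages of $\mu(A\cap T^{-\gamma}A)$ are asymptotically at least $\mu(A)^2$. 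It is designed as the $k=2$ prototype of the averaging machinery the paper needs later for $k\ge3$.

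Your Hindman/Furstenberg--Katznelson IP-limit argument is the classical one. It needs no spectral analysis and works for any IP-system of unitaries in any abelian group, but it only yields the existence of a good $\gamma$. One correction: a weak limit of unitaries need not be self-adjoint, so the self-adjointness of $P$ must come from $P^2=P$ together with $\|P\|\le 1$ (an idempotent contraction on a Hilbert space is an orthogonal projection). You also need $P^*1=1$ to get $\langle P1_A,1\rangle=\mu(A)$; this holds because $P^*$ is the weak IP-limit of $T^{-\gamma}$.

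\textbf{Upper bound.} The paper bounds $c^{\rec}$ with a Bernoulli shift (Lemma \ref{randombound}) and transfers the bound to $c$ via the inverse correspondence principle. You instead build the random set directly with $\Phi_N=\mathbb{F}_p^N$. This is legitimate because $c$ is an infimum over F\o lner sequences, so exhibiting one $\Phi$ suffices. The exponential growth of $|\Phi_N|$ then makes Chebyshev plus Borel--Cantelli enough, and no inverse correspondence is needed at all. Distinctness of $x$ and $x+\gamma$ needs only $\gamma\neq 0$, not $p>2$.

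\textbf{The convex envelope.} Your computation $\mathrm{HP}_{0,1}(U_1,\dots,U_m)=\mathcal{U}^2$ gives $\dAPk(\delta)=\cdAPk(\delta)=\delta^k$ for $k=1,2$. This fills in a step the paper's proof leaves implicit, since Theorem \ref{qualitativemain} is only stated for $k\ge 3$.
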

Some of these results have already been established in the literature (see, e.g.,~\cite{khintchine,bergelson2010ultrafilters}) in the setting of the integers. 

Our main new quantitative result is in the case where $k=3$. 
\begin{theorem}[Quantitative bounds for the Furstenberg-Katznelson IP Roth theorem over finite fields]\label{quantitative}
    Let $p>2$ be a fixed prime number, and let $\delta\in[0,1]$. Then there exists a constant $C_p=\Theta(\log p)$ such that 
    $$\left(\frac{\delta}{3}\right)^{C_p}\le c_{\IP}(3,\delta,\mathbb{F}_p^\omega)\le  \delta^3.$$ Moreover, there are arbitrarily small values of $\delta$ satisfying the following strong upper bound
    $$c_{\IP}(3,\delta,\mathbb{F}_p^\omega)\le \delta^{1+\log p}+\delta^{2\log p},$$ where here $\log=\log_2$ is the logarithm in base $2$.
\end{theorem}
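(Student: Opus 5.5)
The plan is to deduce the statement from Theorem \ref{qualitativemain}, which gives $c_{\IP}(3,\delta,\mathbb{F}_p^\omega)=\mathrm{conv}\,d_{\mathrm{HP}_3}(\delta)$. Everything then reduces to bounding $d_{\mathrm{HP}_3}$ and its convex envelope. The first step is to make $S^3_{U_1,\dots,U_m}(f)$ explicit. Evaluating the defining polynomials at $0,1,2$, the group $\mathrm{HP}_{0,1,2}(U_1,\dots,U_m)$ consists of the triples $(u_0,u_0+u_1,u_0+2u_1+u_2)$ with $u_i\in\mathcal{U}_i$. On the coordinates $\mathcal{U}_2=U_2\times\dots\times U_m$ the three entries are therefore independent and Haar distributed, while on $U_1$ they form a genuine $3$-AP. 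Writing $F(x)=\int_{\mathcal{U}_2} f(x,y)\,dy$ on $U_1$, this gives
$$S^3_{U_1,\dots,U_m}(f)=\int_{U_1}\int_{U_1}F(x)F(x+d)F(x+2d)\,dx\,dd,$$
and $\int_{U_1} F=\int f\ge\delta$. So $d_{\mathrm{HP}_3}(\delta)$ is exactly the infimum of $3$-AP densities of $[0,1]$-valued functions of mean $\ge\delta$ on compact abelian groups of exponent $p$.

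\emph{Lower bound.} A compact abelian group of exponent $p$ is a product of copies of $\mathbb{F}_p$ (dual to an $\mathbb{F}_p$-vector space). By martingale convergence, $F$ can be approximated in $L^3$ by functions factoring through finite quotients $\mathbb{F}_p^n$. Since the $3$-AP count is continuous in $L^3$, it suffices to bound the finite case. By Lemma \ref{function-set} (or alternatively by random rounding of $F$ to a set, which preserves the count up to lower order terms), we may pass to indicator functions of sets $A\subseteq\mathbb{F}_p^n$. Theorem \ref{removal-lemma} then yields $S\ge(\delta/3)^{C_p}$. Since $C_p\ge1$, the function $\delta\mapsto(\delta/3)^{C_p}$ is convex, so it lies below $\mathrm{conv}\,d_{\mathrm{HP}_3}$.

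To see that $C_p=\Theta(\log p)$, I would compute $c_p$. Substituting $x=1-s/p$ in the infimum gives the value $p\kappa$, up to $1+o(1)$, where
$$\kappa=\min_{s>0}\frac{e^{s/3}(1-e^{-s})}{s}<1$$
is an absolute constant. Hence $p^{-c_p}\asymp\kappa$, so $c_p\asymp1/\log p$ and $C_p=1+1/c_p=\Theta(\log p)$.

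\emph{Upper bounds.} Taking $f\equiv\delta$ gives $d_{\mathrm{HP}_3}(\delta)\le\delta^3$, and $\mathrm{conv}\,d_{\mathrm{HP}_3}\le d_{\mathrm{HP}_3}$. For the strong bound, let $A\subseteq\mathbb{F}_p^n$ be the set from Theorem \ref{behrendplus:theorem}, so that $|A|\ge(cp)^n$ with $c>1/2$. Choose $A'\subseteq A$ with $|A'|=\lceil(p/2)^n\rceil$, which is possible for large $n$. Then $A'$ is still free of non-trivial $3$-APs and has density $\delta_n\ge2^{-n}$. Its only $3$-APs are trivial, so $S(1_{A'})=\delta_n p^{-n}$. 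Since $p^{-n}=(2^{-n})^{\log p}\le\delta_n^{\log p}$, this gives $S(1_{A'})\le\delta_n^{1+\log p}$. The additional term $\delta^{2\log p}$ gives room to absorb the rounding, and any small loss if one prefers to realize the density exactly (for instance, a mixture, which is allowed since $f$ may be $[0,1]$-valued). As $n\to\infty$ we have $\delta_n\to0$, which gives arbitrarily small $\delta$.

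The main obstacle I expect is the reduction from general compact groups and functions to finite sets, which is needed to apply Fox--Lov\'asz. The approximation itself is routine, but one must make sure the bound survives passing from functions to sets without degrading $C_p$. Here Lemma \ref{function-set} should do the work; failing that, I would use a random subset argument with the law of large numbers on large finite quotients.
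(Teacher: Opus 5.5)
Your proof is correct and follows the same skeleton as the paper's: reduce via Theorem~\ref{qualitativemain} to $\mathrm{conv}\,d_{\mathrm{HP}_3}$, use Fox--Lov\'asz plus convexity of $(\delta/3)^{C_p}$ for the lower bound, and a $3$-AP-free set for the strong upper bound. Several steps are carried out differently, though.

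\emph{Upper bounds.} You work directly inside $d_{\mathrm{HP}_3}$. Taking $f\equiv\delta$ gives $\delta^3$. The $3$-AP-free set in $U_1=\mathbb{F}_p^n$ with $m=1$ gives the strong bound at $\delta=\delta_n$, and $\delta_n\ge 2^{-n}$ even yields $\delta_n^{1+\log p}$ on its own. The paper instead gets $\delta^3$ from a Bernoulli shift (Lemma~\ref{randombound}), which does not need the main theorem. For the strong bound it builds an explicit skew-product system and evaluates at $\delta=2^{-n}$ exactly, which is why it needs the term $\delta^{2\log p}$ to absorb the ceiling in $|A_n|$.

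\emph{Lower bound.} You make explicit that $\mathrm{HP}_{0,1,2}(U_1,\dots,U_m)$ consists of the $3$-APs of $U_1$ times unconstrained triples in $U_2\times\cdots\times U_m$. Hence the count is the $3$-AP count of the fiber average $F$ on $U_1$; the paper uses this tacitly when it passes to a single group $U$. You then approximate through finite quotients by martingales and randomly round the lift of $F$ to $\mathbb{F}_p^n\times\mathbb{F}_p^M$. This keeps the common difference ranging over the whole group, so Theorem~\ref{removal-lemma} applies at once. The paper's route through Proposition~\ref{randomset} on $\mathbb{F}_p^\omega$ ends with points in $\mathbb{F}_p^M$ but differences only in $\mathbb{F}_p^N$. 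It must repair this by decomposing into fibers over $\mathbb{F}_p^{M-N}$ and using convexity a second time. The price of your route is a fresh, routine second-moment estimate instead of reusing Proposition~\ref{randomset}.

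Two small corrections.

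First, as stated, Lemma~\ref{function-set} does not perform the function-to-set step you need. It only lets you replace $f$ by an indicator on some $U_1\times\cdots\times U_m$. For $m\ge 2$, the $\mathrm{HP}_{0,1,2}$-count of such a set is again the $3$-AP count of a non-indicator fiber average on $U_1$, and it is not the $3$-AP count of the set in the product group. So it cannot be fed into Fox--Lov\'asz. Your random-rounding alternative is what actually closes the argument.

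Second, the substitution $x=1-s/p$ only bounds the infimum from above. That gives $p^{-c_p}\le\kappa(1+o(1))$ and hence only $C_p=O(\log p)$. For $C_p=\Omega(\log p)$ you need a matching lower bound on the infimum. For example, there are at least $p/3$ indices $j\le (p-1)/3$, and each term $x^{j-(p-1)/3}$ with such $j$ is at least $1$. So the infimum is at least $p/3$, giving $p^{c_p}\le 3$ and $C_p\ge 1+\log_3 p$. (The paper simply cites Fox and Lov\'asz for $C_p=\Theta(\log p)$.)
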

\begin{remark}
    The strong upper bound in Theorem~\ref{quantitative} is not valid for large values of $\delta$. Indeed, suppose that $\delta=1-\varepsilon$ for $\varepsilon>0$ sufficiently small.
    By the union bound, if $\mu(A),\mu(B),\mu(C)\ge 1-\varepsilon$, then $\mu(A\cap B\cap C)\ge 1-3\varepsilon$. Thus, we must have $c_{\IP}(3,\delta,\mathbb{F}_p^\omega) \ge 1-3\varepsilon$. As $\varepsilon\rightarrow 0$, this lower bound becomes arbitrarily close to $(1-\varepsilon)^3 = \delta^3$. 
\end{remark}
Assuming our main qualitative result (i.e., Theorem \ref{qualitativemain}), we will now prove Theorem \ref{quantitative}. The weak upper bound, $c_{\IP}(3,\delta,\mathbb{F}_p^\omega)\le \delta^3$ follows from the obvious fact that $c_{\IP}(3,\delta,\mathbb{F}_p^\omega)\le c(3,\delta,\mathbb{F}_p^\omega)$ and Lemma \ref{randombound}. The strong upper bound for small values of $\delta$ follows from a variant of Behrend's construction (Theorem \ref{behrendplus:theorem}).
\begin{proof}[Proof of the upper bound in Theorem \ref{quantitative}]
    From Theorem \ref{behrendplus:theorem}, we see that for all $n$ sufficiently large, there exist subsets $A_n\subseteq \mathbb{F}_p^n$ of size $|A_n|=\lceil\left(\frac{p}{2}\right)^n\rceil$ without any non-trivial $3$-APs. For every $\delta\in[0,1]$ we set $n=-\lfloor\log \delta\rfloor$ where $\log$ is in base $2$. When $\delta$ is a power of $2$, we have $\delta\le \mu(A_n)\le \delta+\frac{1}{p^n}$. Because $A_n$ is free of non-trivial $3$-APs, the only solutions to $x, x+a, x+2a \in A_n$ occur when $a=0$. Thus, out of the $p^{2n}$ possible pairs of $(x,a)$, exactly $|A_n|$ of them satisfy the condition. Thus, for $\delta$ sufficiently small such that the statement above is satisfied for $n$ we have, \begin{equation}\label{logp}\E_{a\in \mathbb{F}_p^n}\E_{x\in \mathbb{F}_p^n} 1_{A_n}(x)1_{A_n}(x+a)1_{A_n}(x+2a) = \frac{\mu(A_n)}{p^n}\le \delta^{1+\log p}+\delta^{2\log p}.
    \end{equation}

    On the other hand, consider the system $\XX = \mathbb{F}_p^{\mathbb{N}\times \mathbb{N}}\times \mathbb{F}_p^{\mathbb{N}}$ equipped with the $\mathbb{F}_p^\omega$-action $T^\gamma (x,y) = (x,y+\gamma\cdot x)$ where $\gamma\cdot x = \sum_{i=1}^\infty \gamma_i x^{(i)}$, where $x^{(i)}\in \mathbb{F}_p^\mathbb{N}$ and $\gamma_i\in \mathbb{F}_p$ are the coordinates of $x$ and $\gamma$, respectively. Let $A = \mathbb{F}_p^{\mathbb{N}\times \mathbb{N}}\times (A_n\times \mathbb{F}_p^{\mathbb{N}\setminus \{1,\dots,n\}})$. Because $x$ is uniformly distributed in $\mathbb{F}_p^{\mathbb{N}\times \mathbb{N}}$, for any $\gamma \neq 0$, the projection of $\gamma \cdot x$ onto its first $n$ coordinates acts as a uniformly distributed random variable independent of $y$. Therefore, integrating over the system is equivalent to averaging over all pairs $(x, a) \in \mathbb{F}_p^n \times \mathbb{F}_p^n$. Thus, we have
    $$c^{\rec}(3,\delta,\mathbb{F}_p^\omega)\le \mu(A\cap T^{-\gamma} A\cap T^{-2\gamma} A)=\E_{a\in \mathbb{F}_p^n}\E_{x\in \mathbb{F}_p^n} 1_{A_n}(x)1_{A_n}(x+a)1_{A_n}(x+2a)$$ for all $\gamma\neq 0,$
    and the claim follows from Theorem \ref{qualitativemain}, and \eqref{logp}.
\end{proof} 
\begin{remark}
The bound in Equation \eqref{logp} relies on a 3-AP-free subset of size $\lceil(p/2)^n\rceil$. By fully utilizing the recent result of Elsholtz et al., which guarantees a subset of size at least $(cp)^n$ for an absolute constant $c > 1/2$, the base of the logarithm can be optimized. Setting $n = \lfloor\log_c \delta\rfloor$ yields the strictly tighter upper bound $\delta^{1+\log_{1/c} p} + \delta^{2\log_{1/c} p}$. We maintain the base $2$ formulation in the main proof for expository clarity.
\end{remark}
\begin{remark}
A similar argument, using Behrend type constructions for longer progressions, can be utilized to prove an upper bound for $c_{\IP}(k,\delta,\mathbb{F}_p^\omega)$. For instance, see \cite{behrendlonger} for upper bounds on $r_k(\mathbb{F}_p^n)$. Unfortunately, these bounds are not optimal and consequently, the corresponding bounds on $c_{\IP}$ are also not optimal.
\end{remark}
The proof of the lower bound follows from a lower bound for Green's removal lemma obtained by Fox and Lovatz (Theorem \ref{removal-lemma}). 
\begin{proof}[Proof of the lower bound in Theorem \ref{quantitative}]
Fix a constant $C_p$ as in Theorem \ref{removal-lemma}, and note that $C_p = \Theta(\log p)$ (see \cite{Foxlovasz}). By Theorem \ref{qualitativemain}, and since $\delta\rightarrow(\frac{\delta}{3})^{C_p}$ is convex, it suffices to show that $\dAP(\delta) \ge (\frac{\delta}{3})^{C_p}$, for all $\delta\in[0,1]$.  Assume by contradiction that 
    $$
        \dAP(\delta) < (\frac{\delta}{3})^{C_p}-\varepsilon_0
   $$
    for some $\delta\in[0,1]$ and $\varepsilon_0>0$. By definition, we can find a $p$-torsion group $U$ and a function $f:Z\rightarrow[0,1]$ with $\int_{U} f\,d\mu_U \ge \delta$, and 
     $$\int_U \int_U f(x)f(x+a)f(x+2a) \,d\mu_U(x)\,d\mu_U(a) < (\frac{\delta}{3})^{C_p}-\varepsilon_0,$$ where $\mu_U$ is the Haar measure on $U$.
    Since the continuous functions on $U$ are dense in $L^1(\mu_U)$, we may assume without loss of generality that $f$ is continuous. By the structure theorem of $p$-torsion groups, we can write $U = \prod_{i\in I} \mathbb{F}_p$ for some countable set of indices $I$. If $I$ is finite, we can always extend $U$ to some infinite group and lift $f$ to that extension, so we shall assume without loss of generality that $I=\mathbb{N}$. We can then embed $\mathbb{F}_p^\omega$ in $U$ in the obvious manner.

     Set $F(\gamma) := f(\gamma)$ for all $\gamma\in \mathbb{F}_p^\omega$. Choose the F\o lner sequence $\Phi_N = \mathbb{F}_p^N$. Since $f$ is continuous we have $$\lim_{N\rightarrow\infty}\E_{\gamma\in \Phi_N} F(\gamma)\ge \delta$$ and 
     \begin{align*}
\lim_{N\rightarrow\infty}\E_{a\in \Phi_N} &\E_{x\in \Phi_N}F(x)F(x+a)F(x+2a)\\&= \int_U \int_U f(x)f(x+a)f(x+2a) \,d\mu_U(x)\,d\mu_U(a).   
     \end{align*}
     By Proposition \ref{randomset}, we can find a subset $E\subseteq\mathbb{F}_p^\omega$, such that $d_{\Phi}(E)\ge \delta$ and for all $a\neq 0$, $d_{\Phi}(E\cap E-a\cap E-2a) = \lim_{N\rightarrow\infty} \E_{x\in \Phi_N}F(x)F(x+a)F(x+2a). $ Since this holds for all $a\neq 0$, and $|\Phi_N|\rightarrow\infty$ as $N\rightarrow\infty$, we see that
     $$\lim_{N\rightarrow\infty} \E_{a\in \Phi_N} d_{\Phi}(E\cap E-a\cap E-2a) \le  (\frac{\delta}{3})^{C_p}-\frac{\varepsilon_0}{2}.$$
     We conclude, that for all $N$ sufficiently large and all $M$ sufficiently large with respect to $N$ ($M\gg N$), there exists a subset $E' \subseteq \mathbb{F}_p^M$ satisfying that $|E'|\ge \delta p^M$ and 
     $$\E_{a\in \mathbb{F}_p^N} \E_{x\in \mathbb{F}_p^M} 1_{E'}(x) 1_{E'}(x+a) 1_{E'}(x+2a) \le  (\frac{\delta}{3})^{C_p}-\frac{\varepsilon_0}{2}. $$
    This is almost a contradiction to Theorem \ref{removal-lemma}, but unfortunately, here $M$ may not be equal to $N$. To overcome that, we write $\mathbb{F}_p^M  = \mathbb{F}_p^N \times \mathbb{F}_p^{M-N}$ and for $x\in \mathbb{F}_p^M$ write $x=(x_N,x_{M-N})$ for $x_N\in \mathbb{F}_p^N$ and $x_{M-N}\in \mathbb{F}_p^{M-N}$. Let
    $$E'_{x_{M-N}} = \{x\in \mathbb{F}_p^N : (x,x_{M-N}) \in E'\},$$ 
    and let $\delta_{x_{M-N}} := \frac{|E'_{x_{M-N}}|}{p^N}$. Then,
    \begin{equation}\label{avgdelta}
        \E_{x_{M-N}\in \mathbb{F}_p^{M-N}} \delta_{x_{M-N}} \ge \delta
    \end{equation}
and
\begin{equation}\label{contradiction}\E_{x_{M-N}\in \mathbb{F}_p^{M-N}} \E_{a\in \mathbb{F}_p^N} \E_{x\in \mathbb{F}_p^N} 1_{E'_{x_{M-N}}}(x) \cdot1_{E'_{x_{M-N}}}(x+a)\cdot 1_{E'_{x_{M-N}}}(x+2a) < (\frac{\delta}{3})^{C_p}-\varepsilon_0/2.
    \end{equation}
    However, by Fox and Lov\'asz (Theorem \ref{removal-lemma}), we have that $$\E_{a\in \mathbb{F}_p^N} \E_{x\in \mathbb{F}_p^N} 1_{E'_{x_{M-N}}}(x) \cdot1_{E'_{x_{M-N}}}(x+a)\cdot 1_{E'_{x_{M-N}}}(x+2a) \ge \left(\frac{\delta_{x_{M-N}}}{3}\right)^{C_p}$$ and so by \eqref{avgdelta} and the convexity of $\delta\mapsto \left(\frac{\delta}{3}\right)^{C_p}$, we see that the left hand side in \eqref{contradiction} is bounded below by $$\E_{x_{M-N}\in \mathbb{F}_p^{M-N}} (\frac{\delta_{x_{M-N}}}{3})^{C_p} \ge \left(\frac{\delta}{3}\right)^{C_p},$$ which is a contradiction to \eqref{contradiction}, as required.
\end{proof}
\section{Open Questions and conjectures}\label{questions:sec}
We are interested in generalizing the quantitative results obtained in Theorem \ref{quantitative} to longer progressions $(k>3)$. In this case, the density function $\dAPk$ counts Hall-Petresco progressions, which are more general than the $k$-term arithmetic progressions.  Let $p$ be a fixed prime, let $k,m\in \mathbb{N}$ and let $n_1,\dots,n_m\in \mathbb{N}$. We denote by $r_{k}(\mathbb{F}_p^{n_1},\dots,\mathbb{F}_p^{n_m})$ the maximal density of a subset $A\subseteq\mathbb{F}_p^{n_1}\times\dots\times \mathbb{F}_p^{n_m}$ containing no non-trivial Hall-Petresco progression of length $k$. Namely,
    $$A^k\cap \mathrm{HP}_{0,1,\dots,k-1}(\mathbb{F}_p^{n_1},\dots,\mathbb{F}_p^{n_m}) =A^k\cap \mathrm{HP}_{0,0,\dots,0}(\mathbb{F}_p^{n_1},\dots,\mathbb{F}_p^{n_m}).$$ For every $\delta\in[0,1]$, let $$R_{k,m,p}(\delta) = \inf_{n_1,\dots,n_m\in\mathbb{N}} \inf_{|A| \ge \delta p^{\sum_{i=1}^m n_i}} \E_{\bm{x}\in \mathrm{HP}_{0,1,\dots,k-1}(\mathbb{F}_p^{n_1},\dots,\mathbb{F}_p^{n_m})} \bigotimes_{i=0}^{k-1} 1_A(\bm{x}),$$ where the infimum is over all integers $n_1,\dots,n_m$, and all subsets $A\subseteq \mathbb{F}_p^{n_1}\times\dots\times\mathbb{F}_p^{n_m}$.
    We can generalize a question of Erd\H{o}s to Hall-Petresco progressions.
\begin{question}[Erd\H{o}s question for Hall-Petresco progressions]
    Find the asymptotic behavior of $r_k(\mathbb{F}_p^{n_1},\dots,\mathbb{F}_p^{n_m})$ as a function of $n_1,\dots,n_m$, where $m,k$ and $p$ are fixed.
\end{question}
As usual, a reasonable approach is to provide quantitative bounds for the counting function.
\begin{question}[Quantitative bounds for Hall-Petresco counting function]\label{HPquantitative:question}
Let $m$ and $k$ be fixed integers and $p$ a fixed prime number. Find the asymptotic behavior of $R_{k,m,p}(\delta)$ as a function of $\delta$.
\end{question}
The most interesting instance is when $m=k-1$. Indeed, $\dAPk(\delta) = R_{k,k-1,p}(\delta)$, and so Theorem \ref{qualitativemain} can be used to derive quantitative bounds for the constants $c_{\IP}^{\rec}(k,\delta,\mathbb{F}_p^{\omega}),c_{\IP}(k,\delta,\mathbb{F}_p^{\omega}),c^{\rec}(k,\delta,\mathbb{F}_p^{\omega}),c(k,\delta,\mathbb{F}_p^\omega)$ for general values of $k\ge 3$.

While these constants are convex in $\delta$, unfortunately we do not know the same for $\dAPk$. 
\begin{question}\label{convex:question}
    Let $k\ge 3$, and let $p\geq k$ be a fixed prime. Is $\delta\mapsto \dAPk(\delta)$ convex?
\end{question}

Our last question addresses whether the $\sup$ in the definitions of our constants (see Definitions \ref{F:constant}, \ref{FKconstant:def}, and \ref{dynamical:constants}) is actually achieved as a maximum. Before exploring this, we mention a related result. Consider the constant $c^{\mathrm{wm-rec}}$, which is defined analogously to $c^{\rec}$ but with the additional assumption that the underlying system $\XX$ is weakly mixing. In this case, it is a classical fact that $c^{\mathrm{wm-rec}}(\delta,k,\Gamma)=\delta^k$. In \cite{underrecurrent}, Boshernitzan, Frantzikinakis, and Wierdl showed that there exists an under-recurrent weakly mixing system. Specifically, they constructed a weakly mixing system $\XX$ containing a subset $A$ such that $\mu\left(\bigcap_{i=0}^{k-1}T^{-in}A\right)<\mu(A)^k$ for all nonzero $n\in \mathbb{N}$ (thus justifying the role of $\varepsilon>0$ in this special case). Naturally, we ask whether a variant of this phenomenon holds in our setting across all defined constants.
\begin{question}[Under-recurrence]For every fixed prime $p$, integer $k\le p$, and $\delta\in[0,1]$, does there exist an $\mathbb{F}_p^\omega$-system $\XX=(X,\mathcal{B},\mu,T)$ and a measurable subset $A\subseteq X$ with $\mu(A)=\delta$ such that$$\mu\left(\bigcap_{i=0}^{k-1} T^{-i\gamma} A\right)<\cdAPk(\delta)$$for all $\gamma\in \mathbb{F}_p^\omega\setminus \{0\}$?\end{question}
\subsection*{Acknowledgements}
I would like to thank Dor Elboim for useful discussions about probability theory resulting in a simpler proof of the results in Section~\ref{randomsec}; Dan Florentin for useful discussions about the lower convex envelope; Alexander Fish for useful discussions about the inverse of the Furstenberg correspondence principle. I also thank Bryna Kra for her encouragement. Finally, I acknowledge the use of Gemini 3.1 pro for copy editing. 

\section{Ergodic averages along IPs}\label{IPergodicavg:sec}
Our primary tool is a structural result for the characteristic factors of ergodic averages along $\IP$s. These objects were recently introduced in \cite{SK} in the integer setting. Here we generalize the definition from \cite{SK} to all countable abelian groups $\Gamma$. First, let $\Phi = (\Phi_N)_{N\in\mathbb{N}}$ be a F\o lner sequence of $\N$ and let $\gn$ be a sequence of elements in $\Gamma$. We define the IP-F\o lner sequence associated with $\Phi$ and $\gn$ as the sequence of finite multisets
$$\IP_{\Phi_N}\big(\gn\big)=\{\gamma_{n_1}+\dots+\gamma_{n_{\ell}} : \ell\ge 0, n_1,\dots,n_{\ell}\in\Phi_N \text{ are distinct}\}.$$
We cautiously note that an $\IP$-F\o lner sequence is \textbf{not} a F\o lner sequence in general, even if the multisets are sets.
\begin{definition}[Ergodic average along an $\IP$-F\o lner sequence]
    Let $\Gamma$ be a countable abelian group, let $\XX=(X,\mathcal{B},\mu,T)$ be a $\Gamma$-system, let $\gn$ be a sequence of elements in $\Gamma$, and let $\Phi=(\Phi_N)_{N\in\mathbb{N}}$ be a F\o lner sequence for $\mathbb{N}$. The \emph{ergodic average of $f\in L^2(\mu)$ along the $\IP$-F\o lner sequence $\IP_{\Phi_N}\big(\gn\big)$} is defined by
    \begin{equation}\label{IPergodicavg}
    A_N(f) = \E_{\gamma\in\IP_{\Phi_N}(\gn)}T^\gamma f.
\end{equation}
\end{definition}
\begin{example}
    Let $\Gamma = \mathbb{F}_p^\omega$, and let $(e_n)_{n\in\mathbb{N}}$ be the standard basis for $\Gamma$. Let $\Phi_N = [1,N]$, and let $f\in L^2(\mu)$ be any function on a $\Gamma$-system $\XX=(X,\mathcal{B},\mu,T)$. Then, the ergodic average of $f$ over $\IP_{\Phi_N}\left((e_n)_{n\in \mathbb{N}}\right)$ is
    $$A_N(f) = \frac{1}{2^N}\sum_{\omega\in \{0,1\}^N} T^{\sum_{i=1}^N\omega_ie_i} f.$$ 
\end{example}
The convergence (and divergence) of these ergodic averages were studied in \cite{SK} for $\Z$-systems, and in some special cases the limit was also computed. Here, we will show that the situation in vector spaces over finite fields is somewhat simpler. That is, the ergodic average over any $\IP$ always converges along an \emph{increasing} $\IP$-F\o lner sequence.
We say that a F\o lner sequence $\Phi=(\Phi_N)$ is increasing if it takes the form $\Phi_N = [a,b_N]$ for some fixed $a$ and a sequence $b_N\rightarrow \infty$.
\begin{theorem}[Convergence of the mean ergodic average along increasing $\IP$-F\o lner sequences.]\label{IPMET}
     Let $p$ be a prime number, let $V$ be a countable vector space over $\mathbb{F}_p$, and let $\XX=(X,\mathcal{B},\mu,T)$ be a $V$-system. Let $\gn$ be an arbitrary sequence of elements in $V$, and let $\Phi=(\Phi_N)_{N\in\mathbb{N}}$ be an increasing F\o lner sequence. Then, for every $f\in L^2(\mu)$, the limit
    $$\lim_{N\rightarrow\infty} \EIP T^\gamma f$$ exists in $L^2(\mu)$.
\end{theorem}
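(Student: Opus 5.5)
The plan is to exploit the fact that, along an increasing F\o lner sequence $\Phi_N=[a,b_N]$, the IP-average factorizes into a product of commuting operators, and then to diagonalize everything at once using the spectral theorem for the unitary representation of the countable abelian group $V$ on $L^2(\mu)$.

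\emph{Factorization.} Since $\IP_{\Phi_N}(\gn)$ is the multiset $\{\sum_{n\in F}\gamma_n : F\subseteq [a,b_N]\}$, with each subset $F$ counted once, and the operators $T^{\gamma}$ commute, we have the operator identity
$$\EIP T^\gamma f=\prod_{n=a}^{b_N}\frac{I+T^{\gamma_n}}{2}\, f .$$
This is just the expansion of the product over all choices of $I$ or $T^{\gamma_n}$ in each factor, which reproduces each subset $F$ exactly once with weight $2^{-(b_N-a+1)}$. Here it matters that each subset is counted with multiplicity, matching the multiset convention.

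\emph{Spectral representation.} The map $\gamma\mapsto T^\gamma$ is a unitary representation of the discrete countable abelian group $V$. Its Pontryagin dual $\widehat V$ is a compact $p$-torsion group, and every character takes values in the $p$-th roots of unity. By the spectral theorem (Stone--Naimark--Ambrose--Godement), for each $f$ there is a finite spectral measure $\sigma_f$ on $\widehat V$ such that the cyclic subspace of $f$ is unitarily equivalent to $L^2(\sigma_f)$. Under this equivalence, $T^\gamma$ becomes multiplication by $\chi\mapsto\chi(\gamma)$, and $f$ corresponds to the constant function $1$. Consequently, $A_N f$ corresponds to the multiplier
$$m_N(\chi)=\prod_{n=a}^{b_N}\frac{1+\chi(\gamma_n)}{2}.$$

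\emph{Pointwise convergence of the multipliers.} For a $p$-th root of unity $\omega$, we have $|(1+\omega)/2|=1$ if $\omega=1$, and $|(1+\omega)/2|\le\cos(\pi/p)<1$ otherwise. Fix $\chi$ and split into two cases.
\begin{itemize}
\item If $\chi(\gamma_n)\neq 1$ for infinitely many $n$, then $|m_N(\chi)|\le\cos(\pi/p)^{\#\{n\le b_N:\chi(\gamma_n)\ne1\}}\to 0$.
\item Otherwise, there is $n_0$ with $\chi(\gamma_n)=1$ for all $n\ge n_0$. Because the intervals share the fixed left endpoint $a$ and $b_N\to\infty$, the product $m_N(\chi)$ is eventually constant, equal to $\prod_{n=a}^{n_0}\frac{1+\chi(\gamma_n)}{2}$.
\end{itemize}
Thus $m_N\to m$ pointwise on $\widehat V$, with $|m_N|\le 1$. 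By dominated convergence in $L^2(\sigma_f)$, $m_N\to m$ in $L^2(\sigma_f)$, and hence $A_N f$ converges in $L^2(\mu)$.

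The only step requiring care is the correct use of the spectral theorem. Either one passes to the cyclic subspace generated by $f$, or equivalently one notes that $\|A_Nf-A_Mf\|^2=\int|m_N-m_M|^2\,d\sigma_f$ and applies dominated convergence to obtain the Cauchy property. The increasing hypothesis is exactly what makes the second case work: with moving left endpoints, the tail products could oscillate among finitely many values and fail to converge.
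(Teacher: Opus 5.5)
Your proposal is correct and follows essentially the same route as the paper's proof: the spectral theorem for the $V$-action, the product formula $\prod_{n\in\Phi_N}\frac{1+\xi(\gamma_n)}{2}$ for the multiplier, the dichotomy between characters with $\xi(\gamma_n)=1$ for all but finitely many $n$ (eventually constant) and all others (decay to $0$), and dominated convergence. The only differences are cosmetic. You work with the scalar spectral measure $\sigma_f$ of $f$ rather than the projection-valued measure $\rho$, and you derive the product formula from the operator factorization rather than citing it.
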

\begin{proof}
    Let $\XX$ be as in the theorem, and let $\Sigma\cong \mathbb{F}_p^\mathbb{N}$ denote the Pontryagin dual of $V\cong \mathbb{F}_p^\omega$. The spectral theorem for unitary operators (see, e.g.,~\cite{folland-harmonic-analysis}) implies that there exists a projection-valued Borel measure $\rho$ on $\Sigma$ such that
$$T^\gamma = \int_{\Sigma} \xi(\gamma)~d\rho(\xi)$$ for all $\gamma\in \Gamma$. Thus, for all $N\in \mathbb{N}$ we have
$$\EIP T^\gamma = \int_{\Sigma} \EIP\xi(\gamma)~d\rho(\xi).$$
Let $\phi_N:\Sigma\rightarrow \mathbb{C}$ denote the map $\phi_N(\xi) = \EIP \xi(\gamma)$. We claim that $\lim_{N\rightarrow\infty}\phi_N$ exists pointwise.

Let $\xi\in \Sigma$. The following equality is a direct computation (see, e.g.,~\cite[Proposition 3.2]{SK})
\begin{equation}\label{avgtoprod} \phi_N(\xi)=\sEIP \xi(\gamma) = \prod_{n\in \Phi_N} \left(\frac{1+\xi(\gamma_n)}{2}\right).
\end{equation}
We study two cases, if $\xi(\gamma_n)=1$ for all but finitely many $n\in \mathbb{N}$, then all but finitely many of the elements in the product above are trivial. Thus, the sequence $\phi_N(\xi)$ is eventually constant in $N$ and so $\lim_{N\rightarrow\infty}\phi_N$ exists. Otherwise, $\xi(\gamma_n)\neq 1$ for infinitely many $n\in \mathbb{N}$. In that case $\left|\frac{1+\xi(\gamma_n)}{2}\right| < 1-\varepsilon_p$ for some absolute constant $\varepsilon_p>0$ depending only on $p$. In particular, as $N\rightarrow\infty$, $\Phi_N$ contains an unbounded number of such $n$ and so $$\lim_{N\rightarrow\infty}\sEIP \xi(\gamma)=0.$$ Thus, we can write $\phi=\lim_{N\rightarrow\infty}\phi_N$ and since $\phi_N$ are uniformly bounded, the Lebesgue dominated convergence theorem \cite[Proposition 1.48]{folland-harmonic-analysis} implies that
$$\lim_{N\rightarrow\infty}\sEIP T^\gamma = \int_\Sigma \lim_{N\rightarrow\infty}\sEIP\xi(\gamma)~d\rho(\xi) = \int_\Sigma \phi(\xi)~d\rho(\xi) $$ in the strong topology. Equivalently, $\lim_{N\rightarrow\infty}\sEIP T^\gamma f = \int_{\Sigma} \phi(\xi)~d\rho(\xi)f$ for all $f\in L^2(\mu)$, as required.
\end{proof}
Our next goal is to derive an explicit formula for the limit. Following the notation in the proof of the theorem above, we may approach this problem by computing the limit function $\phi(\xi)$. We can simplify our analysis by observing that the sequence $\gn$, and more generally the $\IP$ generated by $\gn$ are both contained in the subspace $W=\mathrm{Span}_{\mathbb{F}_p}(\gn)$. Restricting our attention to the action of $W$ then reduces the problem to the case where $\gn$ generates the acting group. Moreover, since the action of the complementary subspace $W'$ (i.e., $V=W\oplus W'$) is independent of the $\IP$, we wish to \emph{decompose} the action into a trivial and a generating component. We will formalize this decomposition in the next section.
\subsection{Decomposition of the action}
Throughout, we denote by $V_{\gn}$ the subspace of $V$ generated by the vectors $\gn$. We give two examples that illustrate the two extreme behaviors of the ergodic averages along $\IP\big(\gn\big)$, in the cases where the limit is not the projection onto the $T$-invariant functions. The first example corresponds to a trivial action along the $\IP$ (i.e., the trivial component).
\begin{example}[The trivial component]\label{trivial:ex}
    Let $V$ be an infinite vector space over $\mathbb{F}_p$. Fix a basis  $\mathcal{B}=(e_n)_{n\in\mathbb{N}}$ for $V$ and write $V=\mathbb{F}_p^{\omega}$ in this basis. Let $\gamma_n = e_{2n}$ denote the sequence containing every other element from $\mathcal{B}$. Let $\XX = \mathbb{F}_p^{\mathbb{N}}$ equipped with the product $\sigma$-algebra and Haar measure and consider the action $$(T^v x)_i = x_i + v_{2i-1},$$ where $\mathbb{F}_p^\omega$ is embedded in $\mathbb{F}_p^\mathbb{N}$ in the obvious manner. We see that only the odd coordinates of $v$ are acting non-trivially on $\XX$. In that case, by construction, the action of $T^{\gamma_n}$ on $\XX$ is trivial. Hence, the ergodic average is also trivial (i.e., maps every $f\in L^2(\mu)$ to itself).
\end{example}
We see that the factor of $\XX$ generated by the $V_{\gn}$-invariant functions must play a role in the limit formula for the mean ergodic theorem. We will denote this factor by $Z^0_{\gn}(\XX)$. On the other extreme, we have the \emph{generated component}.
\begin{example}[The generated component]\label{junta:ex}
    Let $X=\mathbb{F}_p^{\mathbb{N}}$ be the infinite product of $\mathbb{F}_p$ and let $V=\mathbb{F}_p^\omega$ embedded in $X$ in the obvious manner. We can then define an action of $V$ on $\XX$ by setting $T^\gamma x=x+\gamma$ for all $\gamma\in V$ and $x\in X$. This time, we let $\gn=(e_n)_{n\in \mathbb{N}}$ denote the standard basis for $V$. Observe that in this case the vectors $\gn$ generate the acting group $V$. Unfortunately, this time the limit of the ergodic average is no longer a projection onto any subspace of $L^2(\mu)$. For instance, let $f:X\rightarrow \mathbb{C}$ denote the projection to the first coordinate (here we identify $\mathbb{F}_p$ with the set of $p^{\mathrm{th}}$-roots of unity in $S^1$). Taking $\Phi_N = [1,N]$, we see that
    $$\EIP T^\gamma f = (\frac{1+\zeta_p}{2})f$$ for all $N$, where $\zeta_p = e^{\frac{2\pi i}{p}}$.
\end{example}
To capture these distinct behaviors formally, we introduce the following definitions.
\begin{definition}[Eigenfunctions, Eigenspaces and Juntas.]
Let $V$ be a countable vector space over $\mathbb{F}_p$, let $\gn$ be a sequence of elements in $V$ and let $\XX= (X,\mathcal{B},\mu,T)$ be a $V$-system.
    \begin{itemize}
        \item[(i)] An \emph{eigenfunction} for $T$ is a non-zero element $f\in L^2(\mu)$ satisfying that $T^\gamma f = \lambda(\gamma) f$ for all $\gamma\in V$, for some character $\lambda:V\rightarrow S^1$. The character $\lambda$ is called the \emph{eigenvalue} of $f$.
        \item[(ii)] For every character $\lambda:V\rightarrow S^1$, we let $L^2_\lambda(\mu)$ denote the subspace of $L^2(\mu)$ generated by the eigenfunctions whose eigenvalue is $\lambda$. We also let $P_\lambda:L^2(\mu)\rightarrow L^2_\lambda(\mu)$ denote the orthogonal projection onto this subspace.
        \item[(iii)] A character $\lambda:V\rightarrow S^1$ is called a \emph{$\gn$-junta} if $\lambda(\gamma_n)=1$ for all but finitely many $n\in \mathbb{N}$. We let $\mathrm{J}_{\gn}$ denote the group of all $\gn$-juntas $\lambda:V\rightarrow S^1$ under pointwise multiplication.
        \item[(iv)] We also denote by $\mathrm{J}_{\gn}(\XX)$ the factor of $\XX$ generated by the eigenfunctions whose eigenvalue is a $\gn$-junta. In particular, $L^2(\mathrm{J}_{\gn}(\XX))=\bigoplus_{\lambda\in \mathrm{J}_{\gn}} L^2_\lambda(\mu)$.
    \end{itemize}
\end{definition}
We show that the Junta factor $\mathrm{J}_{\gn}(\XX)$ extends $Z^0_{\gn}(\XX)$.
\begin{lemma}[On $\gn$-juntas]\label{junta:properties}
    Let $V$ be a countable vector space over $\mathbb{F}_p$, let $\gn$ be a sequence in $V$ and let $\XX=(X,\mathcal{B},\mu,T)$ be a $V$-system. Then $\mathrm{J}_{\gn}$ is a subgroup of the Pontryagin dual $\Sigma$ of $V$, it contains $$\mathrm{Ann}(\gn) := \{\lambda\in \Sigma : \forall n\in\mathbb{N}\text{ }\lambda(\gamma_n)=1\}$$ and the quotient $\mathcal{J}_{\gn} := \mathrm{J}_{\gn}/\mathrm{Ann}(\gn)$ is at most countable.
\end{lemma}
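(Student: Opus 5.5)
The plan is to verify the three claims in turn, all directly from the definitions.

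For the subgroup property, I use that characters multiply pointwise. If $\lambda,\lambda'\in \mathrm{J}_{\gn}$, there are finite sets $F,F'\subseteq\mathbb{N}$ outside of which $\lambda(\gamma_n)=1$, respectively $\lambda'(\gamma_n)=1$. Then $\lambda\lambda'(\gamma_n)=1$ and $\overline{\lambda}(\gamma_n)=1$ for all $n\notin F\cup F'$. The trivial character clearly lies in $\mathrm{J}_{\gn}$. The inclusion $\mathrm{Ann}(\gn)\subseteq \mathrm{J}_{\gn}$ is immediate: an element of $\mathrm{Ann}(\gn)$ has $\lambda(\gamma_n)=1$ for all $n$, so the exceptional set is empty. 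Closure of $\mathrm{Ann}(\gn)$ under the group operations is equally clear, so the quotient $\mathcal{J}_{\gn}$ makes sense.

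The substantive claim is countability of the quotient. I consider the restriction map $r:\Sigma\to \widehat{W}$, $\lambda\mapsto \lambda|_{W}$, where $W=V_{\gn}$. Its kernel is exactly $\mathrm{Ann}(\gn)$, because a character is trivial on $W$ if and only if it is trivial on every generator $\gamma_n$. Hence $r$ induces an injection $\mathcal{J}_{\gn}\hookrightarrow \widehat{W}$. It therefore suffices to show that the image $r(\mathrm{J}_{\gn})$ is countable.

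For countability of the image, every element of $r(\mathrm{J}_{\gn})$ is a character of $W$ determined by its values on the generators $\gamma_n$. These values lie in the finite set $\mu_p$ of $p$-th roots of unity, since $V$ is $p$-torsion, and they equal $1$ outside some finite set $F$. So the image injects into the set of finitely supported functions $\mathbb{N}\to\mu_p$. That set is a countable union over finite $F\subseteq\mathbb{N}$ of the finite sets $\mu_p^F$, hence countable.

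The only point needing care, and the main (mild) obstacle, is that the $\gamma_n$ need not be linearly independent. Consequently not every finitely supported assignment extends to a character, and distinct $n$ may give equal or dependent vectors. This does not matter: injectivity of $\chi\mapsto(\chi(\gamma_n))_n$ on $\widehat{W}$ only uses that the $\gamma_n$ span $W$. Countability then follows because a subset of a countable set is countable.
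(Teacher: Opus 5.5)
Your proposal is correct and follows essentially the same route as the paper. You check the subgroup and annihilator claims directly, and you get countability by viewing $\mathcal{J}_{\gn}$ inside the dual of $V_{\gn}$, where a junta is determined by finitely many values in the $p$-th roots of unity; this makes $\mathcal{J}_{\gn}$ a countable union of finite sets. Your restriction-map version only uses injectivity of $\mathcal{J}_{\gn}\to\widehat{V_{\gn}}$, so you never need the full identification $\Sigma/\mathrm{Ann}(\gn)\cong\widehat{V_{\gn}}$ that the paper invokes.
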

\begin{proof}
    To prove the first claim, observe that if $\lambda_1,\lambda_2\in \mathrm{J}_{\gn}$, then we can find finite subsets $A_1,A_2\subseteq\mathbb{N}$ such that for all $i=1,2$ we have $\lambda_i(\gamma_n)=1$ for all $n\notin A_i$. In particular, we see that $(\lambda_1\cdot \lambda_2)(\gamma_n)=1$ for all $n\notin A_1\cup A_2$ and so $\lambda_1\cdot \lambda_2\in \mathrm{J}_{\gn}$. Since $\mathrm{J}_{\gn}$ is also closed under taking inverses, it is a subgroup of $\Sigma$. This subgroup contains $\mathrm{Ann}(\gn)$ by definition. Finally, we prove that $\mathcal{J}_{\gn}$ is at most countable. Let $V_{\gn}$ be the subspace of $V$ spanned by $\gn$. By Pontryagin duality, we can identify $\Sigma/\mathrm{Ann}(\gn)$ with the Pontryagin dual of $V_{\gn}$, which we denote by $\Sigma_{\gn}$. Thus, $\mathcal{J}_{\gn}\le \Sigma_{\gn}$ is just the subgroup of all $\gn$-juntas with respect to $V_{\gn}$. To show that this subgroup is at most countable, observe that the family of all finite subsets of $\mathbb{N}$ is countable. Given a fixed finite set $A\subseteq\mathbb{N}$, since $\gn$ generates $V_{\gn}$, there are only finitely many characters that are trivial on all $\gamma_n$ for $n\not \in A$. Thus, $\mathcal{J}_{\gn}$ is a countable union of finite sets and is therefore countable.
\end{proof}
In general, we will need to consider a mixed behavior between the two extremes illustrated in Example \ref{trivial:ex} and Example \ref{junta:ex}. This mixed behavior can be captured via the following definition.
\begin{definition}[Generalized eigenspace]
    Let $V$ be a countable vector space over $\mathbb{F}_p$, and let $\gn$ be a sequence of elemetns in $V$. For every $\lambda\in \mathcal{J}_{\gn}$, we let $\tilde{P}_{\lambda}:L^2(\mu)\rightarrow L^2(\mu)$ denote the orthogonal projection onto the subspace of $L^2(\mu)$ generated by all the eigenspaces $L^2_{\xi}(\mu)$ where $\xi \mod \mathrm{Ann}(\gn) = \lambda$.  
\end{definition}
The following theorem provides a limit formula for the ergodic averages along increasing $\IP$-F\o lner sequences. 
\begin{theorem}[Limit formula for the IP mean ergodic theorem.]\label{IPMETformula}
    Let $V$ be a countable vector space over $\mathbb{F}_p$, let $\XX=(X,\mathcal{B},\mu,T)$ be a $V$-system, and let $\gn$ be an arbitrary sequence of elements in $V$. Then for every increasing F\o lner sequence $\Phi=(\Phi_N)_{N\in\mathbb{N}}$, there exists a function $\omega_{\Phi}:\mathcal{J}_{\gn}\rightarrow \mathbb{C}$ with $|\omega|\le 1$ such that 
    \begin{equation}\label{IPMETlimitformula}\lim_{N\rightarrow\infty} \EIP T^\gamma f = \sum_{\lambda\in \mathcal{J}_{\gn}} \omega_{\Phi}(\lambda)\cdot \tilde{P}_\lambda f
    \end{equation} in $L^2(\mu)$, for all $f\in L^2(\mu)$. 
\end{theorem}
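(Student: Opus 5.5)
The plan is to build on the spectral computation in the proof of Theorem \ref{IPMET}. There we wrote $\EIP T^\gamma=\int_\Sigma \phi_N(\xi)\,d\rho(\xi)$, where $\rho$ is the projection-valued spectral measure on $\Sigma=\widehat V$ and $\phi_N(\xi)=\prod_{n\in\Phi_N}\frac{1+\xi(\gamma_n)}{2}$ by \eqref{avgtoprod}. We showed that $\phi_N\to\phi$ pointwise, and that
$$\lim_{N\to\infty}\EIP T^\gamma f=\int_\Sigma\phi\,d\rho\, f.$$
So the task reduces to identifying the operator $\int_\Sigma \phi\,d\rho$.

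\textbf{Step 1: the support of $\phi$.} By the dichotomy in the proof of Theorem \ref{IPMET}, $\phi(\xi)=0$ unless $\xi(\gamma_n)=1$ for all but finitely many $n$, that is, unless $\xi\in \mathrm{J}_{\gn}$. Hence $\phi=\phi\cdot 1_{\mathrm{J}_{\gn}}$.

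\textbf{Step 2: $\phi$ factors through $\mathcal{J}_{\gn}$.} The value $\phi_N(\xi)$ depends only on the values $\xi(\gamma_n)$, and these are unchanged when $\xi$ is multiplied by an element of $\mathrm{Ann}(\gn)$. So each $\phi_N$, and hence $\phi$, is constant on cosets of $\mathrm{Ann}(\gn)$. Accordingly we define $\omega_\Phi(\lambda)=\phi(\xi)$ for any $\xi$ in the coset $\lambda\in\mathcal{J}_{\gn}$. Explicitly, $\omega_\Phi(\lambda)=\prod_{n\ge a}\frac{1+\xi(\gamma_n)}{2}$, a product that is eventually constant because $\Phi_N=[a,b_N]$ is increasing. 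Clearly $|\omega_\Phi|\le1$.

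\textbf{Step 3: decomposition over cosets.} By Lemma \ref{junta:properties}, $\mathcal{J}_{\gn}$ is countable, so $\mathrm{J}_{\gn}$ is a countable disjoint union of cosets $C_\lambda=\xi_\lambda\mathrm{Ann}(\gn)$. Each coset is closed in $\Sigma$, being a translate of the closed subgroup $\mathrm{Ann}(\gn)$, and is therefore Borel. Countable additivity of $\rho$ in the strong topology then gives
$$\int_\Sigma\phi\,d\rho=\sum_{\lambda\in\mathcal{J}_{\gn}}\omega_\Phi(\lambda)\,\rho(C_\lambda),$$
with the series converging strongly, since the projections $\rho(C_\lambda)$ are mutually orthogonal and the coefficients are bounded.

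\textbf{Step 4: identifying $\rho(C_\lambda)$ with $\tilde P_\lambda$.} This is the step I expect to be the main obstacle, because a coset $C_\lambda$ may be uncountable, so $\rho(C_\lambda)$ need not a priori be a sum of eigenspace projections. The approach is to compute the range directly. If $g$ lies in the range of $\rho(C_\lambda)$, then for every $\gamma\in V_{\gn}$ we have
$$T^\gamma g=\int_{C_\lambda}\xi(\gamma)\,d\rho(\xi)\,g=\xi_\lambda(\gamma)\,g,$$
since $\xi(\gamma)=\xi_\lambda(\gamma)$ on $C_\lambda$ for such $\gamma$. Conversely, if $T^\gamma g=\xi_\lambda(\gamma)g$ for all $\gamma\in V_{\gn}$, then the spectral measure $\langle\rho(\cdot)g,g\rangle$ is supported on the set of $\xi$ with $\xi|_{V_{\gn}}=\xi_\lambda|_{V_{\gn}}$. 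This follows by uniqueness of Fourier–Stieltjes coefficients, after restricting to the dual of $V_{\gn}$, and that set is exactly $C_\lambda$. Thus $\rho(C_\lambda)$ is the projection onto the space of functions on which $V_{\gn}$ acts by the character $\lambda$.

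It remains to match this range with the range of $\tilde P_\lambda$, namely the closed span of the eigenspaces $L^2_\xi(\mu)$ with $\xi\in C_\lambda$. One inclusion is immediate. For the reverse inclusion I would use the decomposition $V=V_{\gn}\oplus W'$ and interpret the definition of $\tilde P_\lambda$ as the joint spectral subspace of $V_{\gn}$ for $\lambda$; the $W'$-action commutes with it and is unconstrained. If the literal span of true eigenfunctions is smaller, I would read $\tilde P_\lambda$ as this spectral projection $\rho(C_\lambda)$, consistent with the "generalized eigenspace" terminology and with Example \ref{trivial:ex}, where $\lambda=1$ gives the $V_{\gn}$-invariant functions $Z^0_{\gn}(\XX)$. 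Substituting into Step 3 then yields \eqref{IPMETlimitformula}.
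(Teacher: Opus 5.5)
Your Steps 1--3 are the paper's argument. The paper also takes the pointwise limit $\phi$ from the proof of Theorem \ref{IPMET}, notes that it vanishes off $\mathrm{J}_{\gn}$ and factors through $\mathcal{J}_{\gn}$, and writes $\int_\Sigma\phi\,d\rho=\sum_{\lambda}\omega_\Phi(\lambda)\rho(\Sigma_\lambda)$ with $\Sigma_\lambda=\{\lambda\}\times\mathrm{Ann}(\gn)$. Your Step 2 is actually the accurate form of the paper's remark that $\phi$ is ``a homomorphism in $\xi$''. That remark is false: for $\Phi_N=[1,b_N]$ and a junta $\xi\notin\mathrm{Ann}(\gn)$ one has $\phi(\xi\overline{\xi})=1>|\phi(\xi)|^2$. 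Only invariance under $\mathrm{Ann}(\gn)$ is needed, and you prove exactly that. At the last step the paper simply asserts, ``by the spectral theorem'', that the range of $\rho(\Sigma_\lambda)$ is the closed span of the eigenspaces $L^2_\xi(\mu)$, $\xi\in\Sigma_\lambda$. Your Step 4 does what the paper does not: it correctly identifies this range as $H_\lambda=\{g\in L^2(\mu): T^\gamma g=\lambda(\gamma)g \text{ for all } \gamma\in V_{\gn}\}$.

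Your doubt about the reverse inclusion is justified. It fails in general, and with $\tilde P_\lambda$ read literally the theorem itself is false. Take $V=\mathbb{F}_p^\omega$, $\gamma_n=e_{2n}$, $V'=\mathrm{Span}(e_{2n-1})$. Let $V$ act on $X=\{0,1\}^{V'}$, with the product of uniform measures, by $(T^{u+u'}x)(w)=x(w+u')$ for $u\in V_{\gn}$ and $u',w\in V'$. Then $V_{\gn}$ acts trivially and the $V'$-action is a Bernoulli shift, hence mixing, so the only eigenfunctions of $V$ are the constants. The right-hand side of \eqref{IPMETlimitformula} is therefore $\int f\,d\mu$. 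But $T^\gamma f=f$ for every $\gamma\in\IP(\gn)$, so the left-hand side is $f$.

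So you should drop the hedge. The statement and your proof are correct precisely when $\tilde P_\lambda$ denotes $\rho(\Sigma_\lambda)$, the projection onto $H_\lambda$. This is also the reading the paper tacitly uses later, for instance when Lemma \ref{measurecontrol} identifies $\mathrm{J}^{(\ell)}_{\gn}(\XX)$ with the $V_\ell$-invariant factor, which is false in the example above under the literal definition.

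If you want the literal definition, you need an extra hypothesis, such as ergodicity of the $V_{\gn}$-action. Then for $g_1,g_2\in H_\lambda$ the functions $|g_1|$ and $g_2\overline{g_1}$ are $V_{\gn}$-invariant, hence constant, so $\dim H_\lambda\le 1$. Since $V$ is abelian, $H_\lambda$ is $T^v$-invariant for every $v\in V$. Hence any nonzero element of $H_\lambda$ is a genuine $V$-eigenfunction with eigenvalue in $\Sigma_\lambda$, and the two projections coincide.
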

\begin{proof}
Let $V_{\gn}$ denote the subspace of $V$ generated by $\gn$. Choosing a basis for $V_{\gn}$ and extending this basis to $V$, we can find a complementary subspace $V'$ such that $V=V_{\gn}\oplus V'$. Let $\Sigma$ denote the Pontryagin dual of $V$ and $\Sigma_{\gn}$ the Pontryagin dual of $V_{\gn}$. From the decomposition $V=V_{\gn}\oplus V'$, we have $\Sigma = \Sigma_{\gn}\times \mathrm{Ann}(\gn)$. Equivalently, we can write every character $\xi:V\rightarrow S^1$ uniquely as a product of a character $\xi_{\gn} \in \Sigma_{\gn}$ and a character $\xi'\in \mathrm{Ann}(\gn)$. Throughout, we will identify $\Sigma_{\gn}$ as a subset of $\Sigma$ by setting $\xi_{\gn}(u\oplus u')=\xi(u)$ for $u\oplus u'\in V_{\gn}\oplus V'$.

We now follow the argument from the proof of Theorem \ref{IPMET}. In that proof, we showed that
$$\phi(\xi) = \lim_{N\rightarrow\infty}\sEIP \xi(\gamma_n)$$ exists pointwise. Furthermore, it is easy to see that $\phi$ is a homomorphism in $\xi$, and thus it suffices to compute $\phi(\xi_{\gn})$ and $\phi(\xi')$ separately. Since $\xi'$ annihilates $V_{\gn}$, a direct computation shows that $\phi(\xi')=1$. Now, for $\phi(\xi_{\gn})$ we have two cases. If $\xi_{\gn}\notin \mathcal{J}_{\gn}$, then $\phi(\xi_{\gn})=0$ (see \eqref{avgtoprod}). Otherwise, we denote by $\omega_{\Phi}$ the restriction of $\phi$ to $\mathrm{J}_{\gn}$. Since $\phi(\xi')=1$, this map factors through the quotient $\mathcal{J}_{\gn}$. Therefore, by the spectral theorem, and Lemma \ref{junta:properties}, 
\begin{align*}\lim_{N\rightarrow\infty}\sEIP T^\gamma f &= \int_{\Sigma} \phi(\xi)~d\rho(\xi)\\ &= \int_{\Sigma_{\gn}\times \mathrm{Ann}(\gn)} \phi(\xi_{\gn})\phi(\xi')\,d\rho(\xi)
\\&=\int_{\mathcal{J}_{\gn}\times\mathrm{Ann}(\gn)}\omega_{\Phi}(\xi_{\gn})\,d\rho(\xi)\cdot \\&=\sum_{\lambda\in \mathcal{J}_{\gn}} \omega_{\Phi}(\lambda)\cdot \tilde{P}_{\lambda}.
\end{align*}
To justify the last equality, let $\Sigma_{\lambda} =\{\lambda\}\times \mathrm{Ann}(\gn)$. We need to show that $\int_{\Sigma} 1_{\Sigma_{\lambda}} \, d\rho(\xi) = \rho( \Sigma_\lambda)$ is equal to the projection $\tilde{P}_{\lambda}$. By the spectral theorem, the image of the projection $\rho(\Sigma_{\lambda})$ is the closed subspace generated by the eigenspaces $L^2_\xi(\mu)$ for all $\xi \in \Sigma_\lambda$. Since $\Sigma_\lambda$ consists precisely of those characters $\xi$ such that $\xi \pmod{\mathrm{Ann}(\gn)} = \lambda$, this is exactly the definition of the projection $\tilde{P}_\lambda$. Thus, $\rho(\Sigma_\lambda) = \tilde{P}_\lambda$, which completes the proof.
\end{proof}
The next proposition is inspired by a key observation. Namely, by removing finitely many elements from each set in the F\o lner sequence $\Phi=(\Phi_N)_{N\in\mathbb{N}}$ we can ensure that the limit in \eqref{IPMETlimitformula} gets arbitrarily close to an actual projection. In fact, it gets arbitrarily close to the projection of $f$ onto the junta factor. Before we formalize this, we give an example that best illustrates this phenomenon. Consider the following modification of Example \ref{junta:ex}.
\begin{example}[Example \ref{junta:ex} revised]
Let $X = \mathbb{F}_p^\mathbb{N}$, let $V=\mathbb{F}_p^\omega$ and equip $X$ with the usual action $T^\gamma x = x+\gamma$. Again, let $\gn=(e_n)_{n\in \mathbb{N}}$ denote the standard basis for $V$, and let $f:X\rightarrow\mathbb{C}$ denote the projection onto the first coordinate. Now, consider the F\o lner sequence $\Phi'=([2,N])_{N\in \mathbb{N}}$. In that case, since $f$ depends only on the first coordinate of $\XX$, we end up in a situation which resembles that of Example \ref{trivial:ex}. Formally, we have that
    $\E_{\gamma\in \IP_{[2,N]}(\gn)}T^\gamma f = f$ for all $N$. 
\end{example}
More generally, we have the following result.
\begin{proposition}[A double-limit mean ergodic theorem along IPs.]\label{refined}
    Let $V$ be a countable vector space over $\mathbb{F}_p$ and let $\gn$ be an arbitrary sequence of elements in $V$. Then for every $V$-system $\XX=(X,\mathcal{B},\mu,T)$, every function $f\in L^2(\mu)$, and every increasing F\o lner sequence $\Phi=(\Phi_N)_{N\in\mathbb{N}}$ we have
    $$\lim_{D\rightarrow\infty} \lim_{N\rightarrow\infty} \E_{\gamma\in \IP_{\Phi_N^D}(\gn)} T^\gamma f = \E(f\mid \mathrm{J}_{\gn}(\XX)),$$
    in $L^2(\mu)$, where $\Phi_N^D = \Phi_N\setminus \{1,2,\dots,D\}$.
\end{proposition}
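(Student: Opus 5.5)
We argue spectrally, exactly as in the proofs of Theorem \ref{IPMET} and Theorem \ref{IPMETformula}. Fix $D$. The F\o lner sequence $\Phi^D=(\Phi_N^D)_{N\in\mathbb{N}}$ is again increasing, up to discarding finitely many $N$ for which $\Phi_N\subseteq\{1,\dots,D\}$, since $\Phi_N^D=[\max(a,D+1),b_N]$. By Theorem \ref{IPMET} and the product formula \eqref{avgtoprod}, the inner limit equals $\int_\Sigma \phi^D(\xi)\,d\rho(\xi)f$. Here $\phi^D(\xi)=\lim_{N\to\infty}\prod_{n\in\Phi_N^D}\frac{1+\xi(\gamma_n)}{2}$ and $\rho$ is the projection-valued spectral measure of the action.

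The key step is to compute the pointwise limit of $\phi^D$ as $D\to\infty$. We will show that it equals $1_{\mathrm{J}_{\gn}}(\xi)$, where we view $\mathrm{J}_{\gn}\subseteq\Sigma$ as the set of all juntas, not the quotient. There are two cases.

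\textbf{Case $\xi\notin\mathrm{J}_{\gn}$.} Then $\xi(\gamma_n)\neq1$ for infinitely many $n$, and each such $n>D$ contributes a factor of modulus at most $1-\varepsilon_p$. Hence $\phi^D(\xi)=0$ for every $D$.

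\textbf{Case $\xi\in\mathrm{J}_{\gn}$.} There is a finite set $A$ with $\xi(\gamma_n)=1$ for $n\notin A$. Once $D\ge\max A$, every factor in the product equals $1$, so $\phi^D(\xi)=1$.

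Thus $\phi^D\to 1_{\mathrm{J}_{\gn}}$ pointwise with $|\phi^D|\le1$. Dominated convergence for spectral integrals, used as in the proof of Theorem \ref{IPMET}, then gives convergence in $L^2$:
$$\lim_{D\to\infty}\lim_{N\to\infty}\E_{\gamma\in\IP_{\Phi_N^D}(\gn)}T^\gamma f=\rho(\mathrm{J}_{\gn})f.$$

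It remains to identify $\rho(\mathrm{J}_{\gn})$ with the conditional expectation onto $\mathrm{J}_{\gn}(\XX)$. Write $\mathrm{J}_{\gn}$ as the disjoint union $\bigcup_{\lambda\in\mathcal{J}_{\gn}}\Sigma_\lambda$ of cosets of $\mathrm{Ann}(\gn)$. This union is countable by Lemma \ref{junta:properties}. As shown at the end of the proof of Theorem \ref{IPMETformula}, $\rho(\Sigma_\lambda)=\tilde P_\lambda$. Countable additivity of $\rho$ then gives $\rho(\mathrm{J}_{\gn})=\sum_\lambda\tilde P_\lambda$, which is the orthogonal projection onto $\bigoplus_{\xi\in\mathrm{J}_{\gn}}L^2_\xi(\mu)$. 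By definition this space is $L^2(\mathrm{J}_{\gn}(\XX))$.

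The main obstacle is this last identification, for two reasons.

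\textbf{First issue: eigenvectors of a coset.} One must check that the range of $\rho(\Sigma_\lambda)$ is spanned by genuine eigenfunctions. The coset $\Sigma_\lambda$ may be uncountable, so $\rho(\Sigma_\lambda)$ need not be a sum of atoms. We handle this as follows. For $g$ in the range of $\rho(\Sigma_\lambda)$, the vector $g$ is an eigenvector of the action restricted to $V_{\gn}$, since $\Sigma\to\Sigma_{\gn}$ maps $\Sigma_\lambda$ to the single point $\lambda$. If "generated by eigenspaces" in the definition of $\tilde P_\lambda$ is read in this relative sense, the identification is immediate; this is how the proof of Theorem \ref{IPMETformula} already treats it, and we rely on it.

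\textbf{Second issue: projection versus conditional expectation.} One must verify that the projection onto the closed span of junta-eigenfunctions equals the conditional expectation onto the generated $\sigma$-algebra. This follows because products and conjugates of such eigenfunctions are again eigenfunctions, with junta eigenvalues since $\mathrm{J}_{\gn}$ is a group (Lemma \ref{junta:properties}). Bounded eigenfunctions therefore span an algebra whose closure is $L^2$ of the factor. We use the standard fact that eigenfunctions of an ergodic component have constant modulus, or we truncate.
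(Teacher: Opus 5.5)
Your proof is correct and is essentially the paper's argument. The paper applies Theorem \ref{IPMETformula} to $\Phi^D$ and lets $D\to\infty$ in the coefficients $\omega_{\Phi^D}(\lambda)\to 1$, using dominated convergence over the orthogonal decomposition $\sum_{\lambda}\tilde P_\lambda$. You instead let $D\to\infty$ at the level of the spectral measure to obtain $\rho(\mathrm{J}_{\gn})$, and only then decompose into cosets of $\mathrm{Ann}(\gn)$. Both routes rest on the same multiplier $\prod_{n}\frac{1+\xi(\gamma_n)}{2}$ and the same identification $\rho(\Sigma_\lambda)=\tilde P_\lambda$.

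Your first caveat is justified: that identification, and the proposition itself, hold only when eigenfunctions are taken relative to the $V_{\gn}$-action. For example, if $V_{\gn}$ acts trivially and a complement acts by a Bernoulli shift, the limit is $f$, while the factor generated by genuine $V$-eigenfunctions is trivial. This relative reading is the one the paper implicitly uses, for instance in the claim in Lemma \ref{measurecontrol} that $\mathrm{J}^{(\ell)}_{\gn}(\XX)$ is the $V_\ell$-invariant factor.
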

\begin{proof}[Proof of Proposition \ref{refined}]
    For every $D\in\mathbb{N}$, write $\Phi^D = (\Phi_N^D)_{N\in\mathbb{N}}$. By Theorem \ref{IPMETformula}, we have
    $$\lim_{N\rightarrow\infty} \E_{\gamma\in \IP_{\Phi_N^D}(\gn)} T^\gamma f = \sum_{\lambda\in\mathcal{J}_{\gn}} \omega_{\Phi^D}(\lambda)\cdot \tilde{P}_{\lambda} f.$$
    Since the images of $\tilde{P}_{\lambda}$ are orthogonal and $|\omega_{\Phi^D}(\lambda)|\le 1$, we have the uniform bound \begin{equation}\label{finitenorm}\norm{\sum_{\lambda\in\mathrm{J}_{\gn}} \omega_{\Phi^D}(\lambda)\cdot \tilde{P}_{\lambda} (f)}_{L^2(\mu)}\le \norm{f}_{L^2(\mu)}<\infty
    \end{equation}
    for all values of $D\in \mathbb{N}$. 
    Moreover, we have (see \eqref{avgtoprod})
    $$\omega_{\Phi^D}(\lambda) = \lim_{N\rightarrow\infty}\prod_{n\in \Phi_N^D} \left(\frac{1+\lambda(\gamma_n)}{2}\right).$$
  We see that for every $\lambda\in \mathcal{J}_{\gn}$, there exists $D$ sufficiently large such that $\omega_{\Phi^D}(\lambda)=1$ (since $\lambda(
  \gamma_n)=1$ for all but finitely many $n$). In other words, $\lim_{D\rightarrow\infty}\omega_{\Phi^D}(\lambda)=1$ pointwise. Thus, by the dominated convergence theorem, we see from \eqref{finitenorm} that
    $$\lim_{N\rightarrow\infty} \E_{\gamma\in \IP_{\Phi_N^D}(\gn)} T^\gamma f=\sum_{\lambda\in \mathrm{J}_{\gn}} \tilde{P}_{\lambda}(f) = \E(f\mid\mathrm{J}_{\gn}(\XX)).$$ 
\end{proof}
We will now illustrate our methods by proving Lemma \ref{simple}.
\begin{proof}[Proof of Lemma \ref{simple}]
The lemma is trivial when $k=1$. When $k=2$, we see by the Furstenberg correspondence principle that it suffices to show that $c_{\IP}^{\rec}(2,\delta,\mathbb{F}_p^\omega) \ge \delta^2 $ and $c(2,\delta,\mathbb{F}_p^\omega)\le \delta^2$. To prove the first inequality, we rely on Proposition \ref{refined}. Let $V=\mathbb{F}_p^\omega$, let $\gn$ be a sequence of elements in $V$, and let $A$ be an arbitrary measurable subset in some $V$-system $\XX=(X,\mathcal{B},\mu,T)$ with $\mu(A)\ge \delta$. We want to show that for every $\varepsilon>0$, the set 
$$\{\gamma\in \IP(\gn)\setminus\{0\} : \mu(A\cap T^{-\gamma}A) > \delta^2-\varepsilon\}$$ is non-empty.

Assume for the sake of contradiction that this is false. Then for some $\varepsilon_0>0$ we have
\begin{equation}\label{contradiction:simple}
    \mu(A\cap T^{-\gamma}A) \le \delta^2-\varepsilon_0
\end{equation}
for all $\gamma\in \IP(\gn)\setminus\{0\}$. Take the F\o lner sequence $\Phi=([1,N])_{N\in\mathbb{N}}$, and as usual, let $\Phi^D = ([D,N])_{N\in\mathbb{N}}$. From Proposition \ref{refined}, applied to $f=1_A$, we know that by taking the limit as $N\rightarrow\infty$ and then $D\rightarrow \infty$, the ergodic averages of $T^{\gamma}1_A$ converges to the projection onto the junta factor. Thus, we can find arbitrarily large $D$ and $N$ such that
$$\E_{\gamma\in \IP_{\Phi_N^D}(\gn)} \mu(A\cap T^{-\gamma}A) \ge \int_X 1_A \E(1_A\mid \mathrm{J}_{\gn}(\XX))\,d\mu - \varepsilon_0.$$
By Jensen's inequality, $$\int_X 1_A\cdot  \E(1_A\mid \mathrm{J}_{\gn}(\XX))\,d\mu = \int_X \E(1_A\mid \mathrm{J}_{\gn}(\XX))^2\,d\mu \ge \mu(A)^2\ge \delta^2.$$ Combining this with the inequality above, we obtain $$\E_{\gamma\in \IP_{\Phi_N^D}(\gn)} \mu(A\cap T^{-\gamma}A) \ge \delta^2 - \frac{\varepsilon_0}{2},$$ which is a contradiction to \eqref{contradiction:simple}. 
Thus, $c_{\IP}^{\rec}(2,\delta,\mathbb{F}_p^\omega)\ge \delta^2$. The inequality $c(2,\delta,\mathbb{F}_p^\omega)\le \delta^2$ follows from Lemma \ref{randombound}.
\end{proof}
\section{Characteristic factors for multiple ergodic averages along IPs}\label{characteristicfactors:sec}
In this section, we study multiple ergodic averages along IPs.
\begin{definition}[Multiple ergodic averages along IPs]
    Let $V$ be a countable vector space over $\mathbb{F}_p$, let $\XX=(X,\mathcal{B},\mu,T)$ be a $V$-system, let $f_1,\dots,f_k\in L^\infty(\mu)$ be $k$ bounded functions on $\XX$, for some $k\ge 1$, let $\gn$ be a sequence of elements in $V$, and let $\Phi=(\Phi_N)_{N\in\mathbb{N}}$ be a F\o lner sequence. The \emph{multiple ergodic average of $f_1,\dots,f_k$ along the $\IP$-F\o lner sequence $\IP_{\Phi_N}\left(\gn\right)$} is the sequence
    \begin{equation}\label{mea}
        A_N(f_1,\dots,f_k)=\EIP \prod_{i=1}^k T^{i\gamma} f_i.
    \end{equation}
\end{definition}
Our next goal is to prove the existence and establish a limit formula for these averages.

We begin with the following version of the van der Corput lemma for IPs (see \cite[Lemma 4.3]{SK}).
\begin{lemma}[van der Corput lemma for increasing $\IP$-F\o lner sequences]\label{vdc}
    Let $H$ be a Hilbert space with norm $\|\cdot\|$ and inner product $\left<\cdot,\cdot \right>$, and let $(x_n)_{n\in \mathbb{N}}$ be a sequence in $H$. Let $V$ be a vector space over $\mathbb{F}_p$, let $\gn$ be a sequence of elements in $V$, and let $\Phi=(\Phi_N)_{N\in\mathbb{N}}$ be an increasing F\o lner sequence. Then,
    $$\limsup_{N\rightarrow\infty} \|\EIP x_\gamma \|^2 \le \limsup_{M\rightarrow\infty} \E_{\gamma_1,\gamma_2\in \IP_{\Phi_M}\left(\gn\right)}\limsup_{N\rightarrow\infty} \E_{\gamma\in \IP_{\Phi_N^M}\left(\gn\right)} \left<x_{\gamma+\gamma_1},x_{\gamma+\gamma_2}\right>.$$
\end{lemma}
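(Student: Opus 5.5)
The plan is to reduce the IP van der Corput inequality to an exact decomposition of the IP-F\o lner multisets, combined with Cauchy--Schwarz. The key structural fact is that, since $\Phi$ is increasing, for $M<N$ we have $\Phi_N = \Phi_M \sqcup \Phi_N^M$ (for $M$ large, with $\Phi_N^M=\Phi_N\setminus\{1,\dots,M\}$ and $\Phi_M\supseteq$ the part of $\Phi_N$ below $M$). Indeed, once $b_M\ge$ the starting point, $\Phi_N\cap\{1,\dots,b_M\}=\Phi_M$. I would therefore index by $b_M$, or simply note that the statement only uses large $M$. Consequently, as multisets,
$$\IP_{\Phi_N}(\gn) = \IP_{\Phi_M}(\gn) + \IP_{\Phi_N^M}(\gn),$$
with each element counted with the product multiplicity. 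It follows that for every fixed $M$ and all $N>M$,
$$\EIP x_\gamma = \E_{\gamma_1\in \IP_{\Phi_M}(\gn)}\E_{\gamma\in \IP_{\Phi_N^M}(\gn)} x_{\gamma+\gamma_1}= \E_{\gamma\in \IP_{\Phi_N^M}(\gn)}\Big(\E_{\gamma_1\in \IP_{\Phi_M}(\gn)} x_{\gamma+\gamma_1}\Big).$$
This is exact, with no error term. It replaces the usual F\o lner-invariance approximation from the classical van der Corput lemma, and I would verify it directly from the definition of $\IP_{\Phi_N}$ as a multiset of subset sums over distinct indices.

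Next, apply Cauchy--Schwarz, in the form of convexity of $\|\cdot\|^2$, to the outer average:
$$\Big\|\EIP x_\gamma\Big\|^2\le \E_{\gamma\in \IP_{\Phi_N^M}(\gn)}\Big\|\E_{\gamma_1\in\IP_{\Phi_M}(\gn)}x_{\gamma+\gamma_1}\Big\|^2 = \E_{\gamma_1,\gamma_2\in \IP_{\Phi_M}(\gn)} \E_{\gamma\in\IP_{\Phi_N^M}(\gn)}\left<x_{\gamma+\gamma_1},x_{\gamma+\gamma_2}\right>.$$
Take $\limsup_{N\to\infty}$ with $M$ fixed. The average over $(\gamma_1,\gamma_2)$ ranges over a fixed finite multiset, so the limsup passes inside as an inequality ($\limsup$ of a finite average is at most the average of the $\limsup$s). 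This gives, for every sufficiently large $M$,
$$\limsup_{N}\Big\|\EIP x_\gamma\Big\|^2\le \E_{\gamma_1,\gamma_2\in\IP_{\Phi_M}(\gn)}\limsup_N\E_{\gamma\in\IP_{\Phi_N^M}(\gn)}\left<x_{\gamma+\gamma_1},x_{\gamma+\gamma_2}\right>.$$
Since the left side does not depend on $M$, taking $\limsup_{M\to\infty}$ (or even $\liminf$) on the right yields the claim.

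There are no real analytic obstacles; the only points needing care are bookkeeping. The first is making precise the multiset identity and the indexing convention relating the $M$ in $\Phi_M$ and in $\Phi_N^M$. If the paper's $\Phi_N^M$ means removing $\{1,\dots,M\}$ while $\Phi_M=[a,b_M]$, I would pass to the subsequence of $M$ values equal to some $b_{M'}$, or equivalently reparametrize. The second is the trivial observation that the conditions for boundedness are not needed, since everything is a finite average or a limsup of real numbers, possibly $+\infty$, which is harmless. I would also remark that the argument never uses the $\mathbb{F}_p$ structure beyond $V$ being an abelian group, matching \cite[Lemma 4.3]{SK}.
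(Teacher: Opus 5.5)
Your proposal is correct and is essentially the paper's proof: the exact factorization of $\IP_{\Phi_N}(\gn)$ as $\IP_{\Phi_M}(\gn)+\IP_{\Phi_N^M}(\gn)$ (the paper phrases this as the addition map being a bijection of the index sets), then Jensen's inequality on the outer average, expanding the square, moving $\limsup_N$ inside the finite average over $(\gamma_1,\gamma_2)$, and finally letting $M\to\infty$. Your indexing caveat, that $\Phi_M$ and $\Phi_N^M=\Phi_N\setminus\{1,\dots,M\}$ partition $\Phi_N$ only when $b_M=M$, is something the paper's bijection claim tacitly assumes, so your reparametrization is just more careful bookkeeping of the same argument.
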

\begin{proof}
    The proof is precisely as in \cite{SK}. We provide the details here for the sake of completeness. Let $\beta(\gamma,\gamma') = \gamma+\gamma'$, and observe that $\beta$ induces a bijection $\beta:\IP_{\Phi_M}\left(\gn\right)\times\IP_{\Phi_N^M}\left(\gn\right)\rightarrow \IP_{\Phi_N}\left(\gn\right),$ for all $N,M\in \mathbb{N}$, provided that $N>M$. In that case we have
    $$\EIP x_\gamma = \E_{\gamma\in \IP_{\Phi_N^M\left(\gn\right)} \E_{\gamma'\in \IP_{\Phi_M}\left(\gn\right)}\left(\gn\right)} x_{\gamma+\gamma'}.$$
    From Jensen's inequality, we deduce that
    \begin{align*}\norm{\E_{\gamma\in \IP_{\Phi_N^M\left(\gn\right)}} \E_{\gamma'\in \IP_{\Phi_M}\left(\gn\right)} x_{\gamma+\gamma'}}^2 &\le\E_{\gamma\in \IP_{\Phi_N^M\left(\gn\right)}} \norm{\E_{\gamma'\in \IP_{\Phi_M}\left(\gn\right)} x_{\gamma+\gamma'}}^2\\&= \E_{\gamma\in \IP_{\Phi_N^M\left(\gn\right)}} \E_{\gamma_1,\gamma_2\in \IP_{\Phi_M}\left(\gn\right)}\langle x_{\gamma+\gamma_1},x_{\gamma+\gamma_2}\rangle.\end{align*}
    Interchanging summations and then taking the limit as $N\rightarrow\infty$ on both sides, we get
     $$\limsup_{N\rightarrow\infty} \norm{\EIP x_\gamma }^2 \le  \E_{\gamma_1,\gamma_2\in \IP_{\Phi_M}\left(\gn\right)}\limsup_{N\rightarrow\infty} \E_{\gamma\in \IP_{\Phi_N^M}\left(\gn\right)} \left<x_{\gamma+\gamma_1},x_{\gamma+\gamma_2}\right>$$
     for all $M\in\mathbb{N}$. Now taking the limit as $M\rightarrow\infty$, the claim follows.
\end{proof}
The idea now is to follow the argument from the work of Host and Kra \cite{host2005nonconventional}. First, we must define $\IP$ counterparts for the Host--Kra seminorms and the Host--Kra cubic measures for vector spaces over finite fields. We start with the former.
\begin{definition}[IP Host--Kra seminorms over finite fields]
   Let $V$ be a countable vector space over $\mathbb{F}_p$, and let $\gn\subseteq V$ be an arbitrary sequence. Let $\XX = (X,\mathcal{B},\mu,T)$ be a $V$-system. For $k = 1$, define the first seminorm $\|\cdot\|_{U^1_{\IP}(\XX,\gn)}$ by setting
   $$\|f\|_{U^1_\IP (\XX,\gn)} := \sup_{\Phi} \max_{1\le i \le p-1} \lim_{N\rightarrow\infty} \norm{\E_{\IP_{\Phi_N}(\ign)} T^\gamma f}_{L^2(\mu)},$$ for all $f\in L^\infty(\mu)$,
   where the supremum is taken over all increasing F\o lner sequences $\Phi=(\Phi_N)_{N\in \mathbb{N}}$. For $k\ge 1$, define the $(k+1)^{\mathrm{st}}$ seminorm $\|\cdot\|_{U^{k+1}_{\IP}(\XX,V)}$ by setting 
   $$\|f\|_{U^{k+1}_{\IP}(\XX,\gn)} := \sup_{\Phi} \max_{1\le i\le p-1}\lim_{N\rightarrow\infty}  \left(\E_{\gamma_1,\gamma_2 \in \IP_{\Phi_N}(\ign))} \|T^{\gamma_1}f\cdot \overline{T^{\gamma_2} f}\|_{U^k_{\IP}(\XX,\gn)}^{2^k}\right)^{1/2^{k+1}},$$
   again taking the supremum over all increasing F\o lner sequences $\Phi=(\Phi_N)_{N\in \mathbb{N}}$.
\end{definition}

As in the case of the Host--Kra seminorms, these $\IP$-seminorms are characterized in terms of certain cubic measures, inductively constructed analogously to how the cubic measures are constructed in \cite{host2005nonconventional}.  Let $[k]=\{0,1\}^k$ be the $k$-dimensional cube. Using notation of \cite{host2005nonconventional}, we write a vertex $\epsilon\in [k]$ without commas and set  $|\epsilon|=\sum_{i=1}^k \epsilon_i$.  If  $\epsilon\in [k]$ and $\eta\in [\ell]$, we  concatenate them to obtain an element $\epsilon\eta\in[k+\ell]$.  Let $\mathcal{C}\colon \mathbb{C}\rightarrow\mathbb{C}$ denote the complex conjugation.  
In~\cite{host2005nonconventional}, joinings are built inductively over the invariant factor, and in our inductive construction the invariant factor is replaced by the $\gn$-junta factor.

\begin{definition}[IP cubic measures over finite fields]
    Let $V$ be a countable vector space over $\mathbb{F}_p$. For every $V$-system $\XX = (X,\mathcal{B},\mu,T)$, set $X^{[0]}=X$ and $\mu_{\gn}^{[0]} = \mu$. For $k\ge 1$, identifying $X^{[k+1]}$ with $X^{[k]}\times X^{[k]}$, we define the cubic measure $\mu_{\gn}^{[k+1]}$ on $X^{[k+1]}$ to be the relatively independent joining of $\XX^{[k]}$ with itself over the factor $\mathrm{J}_{\gn}(\XX^{[k]})$.  This means that if $(f_\epsilon)_{\epsilon\in [k+1]}$ are $2^{k+1}$ bounded real-valued functions on $X$, the measure $\mu_{\gn}^{[k+1]}$ is defined by
    \begin{align*}\int_{X^{[k+1]}}& \bigotimes_{\epsilon\in [k+1]} \tilde{f}_\epsilon \,d\mu_{\gn}^{[k+1]} =\\\int_{X^{[k]}} \E&\left(\bigotimes_{\eta\in [k]} f_{\eta0}\bigm| \mathrm{J}_{\gn}(\XX^{[k]})\right)\cdot \E\left(\bigotimes_{\eta\in [k]} f_{\eta1}\bigm| \mathrm{J}_{\gn}(\XX^{[k]})\right)\,d\mu_{\gn}^{[k]}.
    \end{align*}
    The action on $X^{[k+1]}$, $T^{[k+1]}$ is defined as the diagonal action $(T^{[k+1]})^\gamma = (T^{[k]})^\gamma\times (T^{[k]})^\gamma$.
\end{definition}
The following simple lemma will be useful soon.
\begin{lemma}\label{muign}
Let $V$ be a countable vector space over $\mathbb{F}_p$, let $\XX$ be a $V$-system, let $\gn$ be a sequence of elements in $V$ and let $i\in \mathbb{F}_p\setminus\{0\}$. Then for every $k\ge 0$ we have $$\mu^{[k]}_{\gn} = \mu^{[k]}_{\ign}.$$
\end{lemma}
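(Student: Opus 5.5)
Since the cubic measures $\mu^{[k]}_{\gn}$ are built inductively, each step being a relatively independent self-joining over the junta factor $\mathrm{J}_{\gn}(\XX^{[k]})$, I will argue by induction on $k$. The claim then reduces to the following statement: for every $V$-system $\YY$, the junta factor depends only on the set of characters declared to be juntas, and
$$\mathrm{J}_{\gn}=\mathrm{J}_{\ign}\quad\text{as subgroups of }\Sigma.$$
The base case $k=0$ is trivial, since $\mu^{[0]}_{\gn}=\mu=\mu^{[0]}_{\ign}$. Assume $\mu^{[k]}_{\gn}=\mu^{[k]}_{\ign}$. Then the two systems $\XX^{[k]}$ (same space, same diagonal action, same measure) coincide. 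If they have the same junta factor, the relatively independent joinings defining $\mu^{[k+1]}$ coincide too.

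The key step is therefore the equality of the junta groups. A character $\lambda\in\Sigma$ takes values in the $p$-th roots of unity, and $\lambda(i\gamma_n)=\lambda(\gamma_n)^i$. Since $i\in\mathbb{F}_p\setminus\{0\}$ is invertible mod $p$, we get $\lambda(\gamma_n)^i=1$ if and only if $\lambda(\gamma_n)=1$. So $\lambda(\gamma_n)=1$ for all but finitely many $n$ exactly when $\lambda(i\gamma_n)=1$ for all but finitely many $n$, i.e.\ $\mathrm{J}_{\gn}=\mathrm{J}_{\ign}$. The same argument gives $\mathrm{Ann}(\gn)=\mathrm{Ann}(\ign)$, although this is not needed here.

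It follows that for any $V$-system $\YY$, the eigenfunctions with junta eigenvalues are the same for both sequences, so $\mathrm{J}_{\gn}(\YY)=\mathrm{J}_{\ign}(\YY)$ as factors. In particular the conditional expectations onto these factors agree. I then plug this into the defining formula for $\mu^{[k+1]}$, applied to tensor products $\bigotimes_{\epsilon\in[k+1]}f_\epsilon$. Because such tensor products span a dense subalgebra, the two measures agree on them and hence agree as measures. This closes the induction.

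I do not expect a real obstacle. The only point needing care is bookkeeping: the definition of $\mu^{[k+1]}$ uses the junta factor of the system $\XX^{[k]}$, which is equipped with the measure $\mu^{[k]}_{\gn}$. That is exactly why the inductive hypothesis must assert equality of the measures, and not merely of some derived quantity.
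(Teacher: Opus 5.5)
Your proposal is correct and follows the paper's argument. The paper likewise notes that $\lambda(\gamma_n)=1$ if and only if $\lambda(i\gamma_n)=1$, hence $\mathrm{J}_{\gn}=\mathrm{J}_{\ign}$ and $\mathrm{J}_{\gn}(\XX)=\mathrm{J}_{\ign}(\XX)$, and then concludes by induction on $k$. You fill in details the paper leaves implicit: the invertibility of $i$ modulo $p$, and the need for the inductive hypothesis to assert equality of the measures $\mu^{[k]}$, since the junta factor of $\XX^{[k]}$ is taken with respect to that measure.
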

\begin{proof}
    By definition, $\mathrm{J}_{\gn} = \mathrm{J}_{\ign}$. Indeed, a character of $V$ evaluates to $1$ on $\gamma_n$ for all but finitely many $n$ if and only if it evaluates $1$ on $i\cdot \gamma_n$ for the same set of $n$. It then follows immediately that $\mathrm{J}_{\gn}(\XX) = \mathrm{J}_{\ign}(\XX)$ and so a simple proof by induction on $k$ gives the desired claim.
\end{proof}

The following Proposition is a variant of \cite[Proposition 4.7]{SK}.
\begin{proposition}[IP-seminorms are controlled by the IP cubic measures]
    Let $V$ be a countable vector space over $\mathbb{F}_p$. Let $\XX=(X,\mathcal{B},\mu,T)$ be a $V$-system, let $\gn$ be a sequence of elements in $V$, and let $k\ge 1$. Then for every $f\in L^\infty(\mu)$, we have
    $$\|f\|_{U^k_{\IP}(\XX,(\gamma_n)_{n\in\mathbb{N}})}^{2^k} \le \int_{X^{[k]}} \bigotimes_{\epsilon\in [k]} \mathcal{C}^{|\epsilon|} f \,d\mu_{\gn}^{[k]}. $$ Here $\mathcal{C}$ is the complex-conjugation map $z\mapsto \overline{z}$.
\end{proposition}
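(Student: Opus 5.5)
We argue by induction on $k$, following Host--Kra and \cite[Proposition 4.7]{SK}. Throughout, fix an increasing F\o lner sequence $\Phi$ and $1\le i\le p-1$. Lemma~\ref{muign} gives $\mu^{[k]}_{\ign}=\mu^{[k]}_{\gn}$, so it suffices to bound the quantity inside the supremum for the sequence $\ign$ by the cube integral taken with respect to $\mu^{[k]}_{\ign}$. Renaming $\ign$ as $\gn$, we may therefore take $i=1$. The right-hand side does not depend on $\Phi$ or $i$, so the supremum and maximum are then harmless.

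\emph{Base case $k=1$.} By Theorem~\ref{IPMETformula}, the limit $\lim_N \EIP T^\gamma f=\sum_{\lambda}\omega_\Phi(\lambda)\tilde P_\lambda f$ exists. Since the ranges of the $\tilde P_\lambda$ are orthogonal and $|\omega_\Phi|\le 1$,
$$\lim_N\norm{\EIP T^\gamma f}^2=\sum_{\lambda\in\mathcal{J}_{\gn}}|\omega_\Phi(\lambda)|^2\norm{\tilde P_\lambda f}^2\le \sum_\lambda \norm{\tilde P_\lambda f}^2=\norm{\E(f\mid \mathrm{J}_{\gn}(\XX))}^2 .$$
The last expression equals $\int f\otimes\overline f\,d\mu^{[1]}_{\gn}$ by the definition of the relatively independent joining over the junta factor. (We interpret the definition of $\mu^{[k+1]}$ with the conjugation $\mathcal{C}^{|\epsilon|}$ for complex $f$.)

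\emph{Inductive step.} Assume the bound for $k$ and apply it to $g=T^{\gamma_1}f\cdot\overline{T^{\gamma_2}f}$. This gives
$$\norm{g}_{U^k_{\IP}}^{2^k}\le\int_{X^{[k]}}\bigotimes_{\epsilon}\mathcal{C}^{|\epsilon|}\bigl(T^{\gamma_1}f\,\overline{T^{\gamma_2}f}\bigr)\,d\mu^{[k]}_{\gn}
=\int_{X^{[k]}}(T^{[k]})^{\gamma_1}F\cdot\overline{(T^{[k]})^{\gamma_2}F}\,d\mu^{[k]}_{\gn},$$
where $F=\bigotimes_{\eta\in[k]}\mathcal{C}^{|\eta|}f$. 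This equality uses that $T^{[k]}$ is the diagonal action and that $\mu^{[k]}_{\gn}$ is $T^{[k]}$-invariant; invariance follows inductively, since relatively independent joinings over a factor preserve invariance under the diagonal action.

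Averaging over $\gamma_1,\gamma_2\in\IP_{\Phi_N}(\gn)$, the right-hand side becomes $\norm{\E_{\gamma\in\IP_{\Phi_N}(\gn)}(T^{[k]})^\gamma F}^2_{L^2(\mu^{[k]}_{\gn})}$. By the base case applied in the system $\XX^{[k]}$, its limit in $N$ is at most $\norm{\E(F\mid \mathrm{J}_{\gn}(\XX^{[k]}))}^2$, and by definition this equals $\int\bigotimes_{\epsilon\in[k+1]}\mathcal{C}^{|\epsilon|}f\,d\mu^{[k+1]}_{\gn}$. Taking $2^{k+1}$-th roots, or rather raising both sides to the appropriate power, matches the definition of $\norm{f}_{U^{k+1}_{\IP}}$.

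\emph{Main obstacle.} The seminorm is defined with an inner limit in $N$ whose existence is not obvious, and the two definitions must line up exactly. Concretely, we need that the $U^{k+1}$ expression, which is the limit of the average of $\norm{g}^{2^k}$, is bounded by the limit of the average of the cube integrals. This holds termwise from the inductive hypothesis. The remaining step is to pass from the cube integral to the norm of the IP-average on $X^{[k]}$, where Theorem~\ref{IPMET} guarantees that the limit exists. A further subtlety is that the inductive hypothesis is applied to $U^k_{\IP}(\XX,\gn)$ for the original $\gn$, while the outer average runs over $\IP(\ign)$. Lemma~\ref{muign} handles this, together with the observation that $\mathrm{J}_{\gn}$ is unchanged under $\gamma_n\mapsto i\gamma_n$, which is also what makes the base-case bound independent of $i$.
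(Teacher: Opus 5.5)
Your proposal is correct and follows essentially the same route as the paper. It inducts on $k$, with the base case from Theorem~\ref{IPMETformula} using $|\omega_\Phi|\le 1$ and orthogonality of the $\tilde P_\lambda$, and an inductive step that applies the hypothesis to $T^{\gamma_1}f\cdot\overline{T^{\gamma_2}f}$, rewrites the averaged cube integral as $\norm{\E_{\gamma}(T^{[k]})^\gamma F}^2_{L^2(\mu^{[k]}_{\gn})}$, invokes the base case in $\XX^{[k]}$, and uses Lemma~\ref{muign} to pass between $\ign$ and $\gn$; your caveat about the existence of the inner limit in the seminorm's definition is one the paper also passes over, and reading it as a $\limsup$ leaves the argument unchanged.
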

\begin{proof}
We prove the proposition by induction on $k$. When $k=1$, Theorem \ref{IPMETformula} gives
\begin{align*}
    \norm{f}_{U^1_{\IP}(\XX,\gn)}^2 &= \sup_{\Phi}\max_{1\le i \le p-1} \lim_{N\rightarrow\infty} \norm{\E_{\gamma\in \IP_{\Phi_N}(\ign)} T^\gamma f}_{L^2(\mu)}^2\\&=\sup_{\Phi}\max_{1\le i \le p-1}\norm{\sum_{\lambda\in \mathcal{J}_{\ign}}\omega^{(i)}_{\Phi}(\lambda)\cdot \tilde{P}_{\lambda}(f)}_{L^2(\mu)}^2
\end{align*} 
Recall that $\mathrm{J}_{\gn} = \mathrm{J}_{\ign}$. Since $|\omega^{(i)}_{\Phi}|\le1$ and generalized eigenspaces associated with different eigenvalues are orthogonal, we deduce that the above is bounded by
\begin{align*}\norm{\sum_{\lambda\in \mathcal{J}_{\ign}} \tilde{P}_{\lambda}(f)}_{L^2(\mu)}^2 &= \norm{\E(f\mid \mathrm{J}_{\ign}(\XX))}_{L^2(\mu)}^2\\&= \norm{\E(f\mid \mathrm{J}_{\gn}(\XX))}_{L^2(\mu)}^2\\& = \int_{X^{[1]}} f\otimes\overline{f}\,d\mu_{\gn}^{[1]}.
\end{align*}
Let $k\ge 2$. From Theorem \ref{IPMETformula}, Lemma \ref{muign}, the induction hypothesis, and the bound $|\omega^{(i)}_{\Phi}|\le 1$, we have 
\begin{align*}
    \norm{f}&_{U^k_{\IP}(\XX,\gn)}^{2^k} \\&= \sup_{\Phi} \max_{1\le i \le p-1}\lim_{N\rightarrow\infty} \E_{\gamma_1,\gamma_2\in \IP_{\Phi_N}(\ign)} \|T^{\gamma_1}f\cdot \overline{T^{\gamma_2}f}\|_{U^{k-1}_{\IP}(\XX,\ign)}^{2^{k-1}}\\&\le \sup_{\Phi} \max_{1\le i \le p-1}\lim_{N\rightarrow\infty}\E_{\gamma_1,\gamma_2\in \IP_{\Phi_N}(\ign)} \int_{X^{[k-1]}} \bigotimes_{\epsilon\in [k-1]}\mathcal{C}^{|\epsilon|}\left(T^{\gamma_1}f\cdot\overline{T^{\gamma_2}f}\right)\,d\mu_{\ign}^{[k-1]}\\&=\sup_{\Phi} \max_{1\le i \le p-1}\lim_{N\rightarrow\infty} \int_{X^{[k-1]}} \left|\E_{\gamma\in \IP_{\Phi_N}(\ign)} \bigotimes_{\epsilon\in [k-1]}\mathcal{C}^{|\epsilon|}T^\gamma f\right|^2\,d\mu_{\ign}^{[k-1]}
    \\&=\sup_{\Phi} \max_{1\le i \le p-1}\lim_{N\rightarrow\infty} \int_{X^{[k-1]}} \left|\E_{\gamma\in \IP_{\Phi_N}(\ign)} (T^{[k-1]})^\gamma \bigotimes_{\epsilon\in [k-1]}\mathcal{C}^{|\epsilon|} f\right|^2\,d\mu_{\ign}^{[k-1]}\\&\le  \max_{1\le i \le p-1}\int_{X^{[k-1]}}\E\left(\bigotimes_{\epsilon\in [k-1]}\mathcal{C}^{|\epsilon|}f\mid \mathrm{J}_{\ign}(\XX^{[k-1]})\right) \cdot \overline{\E\left(\bigotimes_{\epsilon\in [k-1]}\mathcal{C}^{|\epsilon|}f\mid \mathrm{J}_{\ign}(\XX^{[k-1]})\right)}\,d\mu_{\ign}^{[k-1]}\\&=\int_{X^{[k-1]}}\E\left(\bigotimes_{\epsilon\in [k-1]}\mathcal{C}^{|\epsilon|}f\mid \mathrm{J}_{\gn}(\XX^{[k-1]})\right) \cdot \overline{\E\left(\bigotimes_{\epsilon\in [k-1]}\mathcal{C}^{|\epsilon|}f\mid \mathrm{J}_{\gn}(\XX^{[k-1]})\right)}\,d\mu_{\gn}^{[k-1]}
    \\&=
    \int_{X^{[k]}} \bigotimes_{\epsilon\in [k]} \mathcal{C}^{|\epsilon|} f \,d\mu_{\gn}^{[k]}.
\end{align*}
completing the argument.
\end{proof}
Next, we show that the IP seminorms control multiple ergodic averages.
\begin{proposition}\label{seminormcontrol:prop}
    Let $V$ be a vector space over $\mathbb{F}_p$, let $\gn$ be a sequence of elements in $V$, let $\XX=(X,\mathcal{B},\mu,T)$ be a $V$-system, let $1\le k< p$, and let $f_1,\dots,f_k\in L^\infty(\mu)$ be bounded functions satisfying that  $\|f_i\|_{L^\infty(\mu)}\le 1$, for all $1\le i \le k$. Then for any increasing F\o lner sequence $\Phi=(\Phi_N)_{N\in\mathbb{N}}$, we have
    \begin{equation}\label{seminormcontrol}
    \limsup_{N\rightarrow\infty} \norm{\EIP \prod_{i=1}^k T^{i\gamma }f_i}_{L^2(\mu)} \le \min_{1\le j \le k}\|f_j\|_{U^k_{\IP}(\XX,\gn)}. 
    \end{equation}
\end{proposition}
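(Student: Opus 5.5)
We argue by induction on $k$, in the style of Host--Kra \cite{host2005nonconventional}, with the van der Corput lemma for increasing $\IP$-F\o lner sequences (Lemma \ref{vdc}) in place of the classical one. The hypothesis $k<p$ guarantees that $1,\dots,k$ and their pairwise differences are nonzero in $\mathbb{F}_p$. Note that the seminorm definition already contains a $\max$ over dilations $i\cdot\gamma_n$ and a $\sup$ over increasing F\o lner sequences; these are what absorb the rescalings of the $\IP$ that appear in the induction.

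\textbf{Base case $k=1$.} Here $\EIP T^\gamma f_1$ has $L^2$-limit bounded by $\|f_1\|_{U^1_{\IP}(\XX,\gn)}$ by the definition (take $i=1$ and the given $\Phi$); the limit exists by Theorem \ref{IPMET}.

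\textbf{Inductive step.} Assume the claim for $k-1$ and fix $j$. First suppose $j\neq 1$ (the case $j=1$ is symmetric; see below). Apply Lemma \ref{vdc} to $x_\gamma=\prod_{i=1}^k T^{i\gamma}f_i$. Using $\IP$-additivity, $x_{\gamma+\gamma_1}$ pairs with $x_{\gamma+\gamma_2}$, and after composing with $T^{-\gamma}$ and using invariance of $\mu$ we get
\[
\langle x_{\gamma+\gamma_1},x_{\gamma+\gamma_2}\rangle=\int \prod_{i=1}^{k-1} T^{i\gamma}\bigl(T^{(i+1)\gamma_1}f_{i+1}\cdot \overline{T^{(i+1)\gamma_2}f_{i+1}}\bigr)\cdot \bigl(T^{\gamma_1}f_1\overline{T^{\gamma_2}f_1}\bigr)\,d\mu .
\]
This is an average of length $k-1$ along the same $\IP$ over $\Phi^M$. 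By Cauchy--Schwarz and the induction hypothesis (applied to the index $j-1$), its $\limsup_N$ is at most $\|T^{j\gamma_1}f_j\overline{T^{j\gamma_2}f_j}\|_{U^{k-1}_{\IP}(\XX,\gn)}$. Here $\Phi^M$ is again increasing, so the hypothesis applies.

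Averaging over $\gamma_1,\gamma_2\in\IP_{\Phi_M}(\gn)$ and applying Hölder, we obtain
\[
\Bigl(\E_{\gamma_1,\gamma_2}\|T^{\gamma_1'}f_j\overline{T^{\gamma_2'}f_j}\|^{2^{k-1}}_{U^{k-1}_{\IP}}\Bigr)^{1/2^{k-1}},\qquad \gamma_i'=j\gamma_i .
\]
Now $j\gamma_1,j\gamma_2$ range over $\IP_{\Phi_M}(j\cdot\gamma_n)$. Taking $\limsup_M$ then gives at most $\|f_j\|^{2}_{U^k_{\IP}}$, using the dilation index $i=j$ in the definition. Lemma \ref{muign} ensures that dilating changes nothing at the level of the lower seminorms. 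Combining this with the left side of Lemma \ref{vdc} (squared norm) yields \eqref{seminormcontrol}.

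\textbf{Case $j=1$.} Instead compose with $T^{-k\gamma}$ (or $T^{-\gamma}$ after shifting roles) so that $f_k$ becomes the untouched factor and the remaining functions are differenced. Concretely, write $\langle x_{\gamma+\gamma_1},x_{\gamma+\gamma_2}\rangle$ with $T^{-k\gamma}$ to get an average of the form $\prod_{i} T^{-(k-i)\gamma}(\cdot)$. Since $-(k-i)$ are distinct nonzero elements of $\mathbb{F}_p$, this is a dilated progression of length $k-1$ along $-\gamma_n$. Handling it this way means the induction hypothesis must be stated for arbitrary dilations $(c\cdot\gamma_n)$, which is harmless since the seminorms are dilation-independent by Lemma \ref{muign} and the definition's max over $i$.

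\textbf{Main obstacle.} The expected difficulty is bookkeeping. One must ensure that the limsup over $N$ inside Lemma \ref{vdc} and the seminorm limits (defined with $\lim$ and $\sup_\Phi$) line up. One must also check that the products $T^{j\gamma_1}f\overline{T^{j\gamma_2}f}$ indexed by the $\IP$ of $j\gamma_n$ match the definition exactly, including multiset issues. I would first prove a slightly more general statement allowing arbitrary distinct nonzero coefficients $c_1,\dots,c_k\in\mathbb{F}_p$ in place of $1,\dots,k$, which makes the induction close cleanly.
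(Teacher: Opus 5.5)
Your proposal is correct and follows essentially the same route as the paper: induction on $k$ via the IP van der Corput lemma, composing with $T^{-\gamma}$ to free the $f_1$-term, then Cauchy--Schwarz with the inductive hypothesis along $\Phi^M$, then H\"older and the dilation index $i=j$ in the seminorm definition. The $f_1$ case is handled by reversing the progression; your composition with $T^{-k\gamma}$ (or your version with general distinct nonzero coefficients $c_1,\dots,c_k$) is exactly what the paper's ``replace $T$ with $T^{-1}$'' amounts to. Your bookkeeping is also slightly more careful than the paper's in two places. First, you correctly obtain $\|f_j\|_{U^k_{\IP}}^2$ as the bound for the squared norm. Second, you note that the seminorms are unchanged under dilating the sequence; strictly, this follows from the $\max$ over $i$ in the seminorm definition rather than from Lemma \ref{muign}.
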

\begin{proof}
    We prove this proposition by induction on $k$. When $k=1$, the claim follows directly from the definition. Let $k\ge 2$, and assume inductively that the claim holds for all smaller values of $k$. Setting $x_\gamma  = \prod_{i=1}^k T^{i\gamma} f_i$, Lemma \ref{vdc} gives that 
    \begin{align*}
        &\limsup_{N\rightarrow\infty} \norm{\EIP \prod_{i=1}^k T^{i\gamma }f_i}_{L^2(\mu)}^2\\&\le \limsup_{M\rightarrow\infty}\E_{\gamma_1,\gamma_2 \in \IP_{\Phi_M}} \limsup_{N\rightarrow\infty}\E_{\gamma\in \IP_{\Phi_N^M}} \int_X \prod_{i=1}^k T^{i(\gamma+\gamma_1)}f_i\cdot \overline{T^{i(\gamma+\gamma_2)}f_i}\,d\mu\\&=\limsup_{M\rightarrow\infty}\E_{\gamma_1,\gamma_2 \in \IP_{\Phi_M}} \limsup_{N\rightarrow\infty}\E_{\gamma\in \IP_{\Phi_N^M}} \int_X \left(T^{\gamma_1} f_1\cdot \overline{T^{\gamma_2}f_1}\right)\cdot \left(\prod_{i=1}^{k-1}T^{i\gamma}\left(T^{(i+1)\gamma_1}f_{i+1} \cdot \overline{T^{(i+1)\gamma_2}f_{i+1}}\right)\right)\,d\mu,
    \end{align*}
    where in the last equality we used the fact that $T^\gamma$ preserves the measure $\mu$.
    By the Cauchy-Schwarz inequality, and the induction hypothesis, we can bound the above by
\begin{align*}\limsup_{M\rightarrow\infty} &\E_{\gamma_1,\gamma_2\in \IP_{\Phi_M}} \norm{T^{\gamma_1}f_1\cdot \overline{T^{\gamma_2}f_1}}_{L^2(\mu)}\cdot \norm{ \limsup_{N\rightarrow\infty}\E_{\gamma\in \IP_{\Phi_N^M}}\left(\prod_{i=1}^{k-1}T^{i\gamma}\left(T^{(i+1)\gamma_1}f_{i+1} \cdot \overline{T^{(i+1)\gamma_2}f_{i+1}}\right)\right)}_{L^2(\mu)} \\&\le \limsup_{M\rightarrow\infty}\E_{\gamma_1,\gamma_2\in \IP_{\Phi_M}} \norm{\limsup_{N\rightarrow\infty} \E_{\gamma\in \IP_{\Phi_N}}\left(\prod_{i=1}^{k-1}T^{i\gamma}\left(T^{(i+1)\gamma_1}f_{i+1} \cdot \overline{T^{(i+1)\gamma_2}f_{i+1}}\right)\right)}_{L^2(\mu)} \\&\le \min_{1\le i \le k-1}\limsup_{M\rightarrow\infty} \E_{\gamma_1,\gamma_2\in \IP_{\Phi_M}} \norm{T^{(i+1)\gamma_1}f_{i+1}\cdot \overline{T^{(i+1)\gamma_2}f_{i+1}}}_{U^{k-1}_{\IP}(\XX,(\gn))}\\&=\min_{2\le i \le k} \|f_i\|_{U^{k}_{\IP}(\XX,\gn)}.
\end{align*}
To prove that it is also smaller than $\|f_1\|_{U^k_{\IP}(\XX,\gn)}$, one may replace $T$ with $T^{-1}$, since $T$ preserves the $L^2(\mu)$-norm, one may apply the same argument with $f_{k-i+1}$ taking the role of $f_i$, in particular, $f_1$ now takes the role of $f_k$.
\end{proof}
Finally, we will show that while the $\IP$-seminorms are generally larger than the Host--Kra seminorms, their characteristic factors are equivalent for $k>1$ (note that Theorem \ref{IPMETformula} shows that they are not equivalent for $k=1$). The analogue of the Host--Kra factors for vector spaces over finite fields were defined and studied in \cite{btz,btz2}. Throughout, we let $\|\cdot\|_{U^k(\XX,V)}$ denote the $k^{\mathrm{th}}$ Host-Kra seminorm, and $Z^k(\XX,V)$ denote the $k^{\mathrm{th}}$ Host--Kra factor, associated with the action on $V$ on the measure space $\XX=(X,\mathcal{B},\mu)$.
\begin{lemma}\label{measurecontrol}
Let $V$ be a countable vector space over $\mathbb{F}_p$, let $\gn$ be a sequence of elements in $V$, let $\XX=(X,\mathcal{B},\mu,T)$ be a $V$-system, let $k\ge 2$, and let $(f_\epsilon)_{\epsilon\in [k]}$ be $2^k$ bounded functions on $\XX$. If $\|f_\epsilon\|_{U^k(\XX,V_{\gn})} = 0$ for some $\epsilon\in [k]$, where $V_{\gn}$ is the subspace of $V$ generated by $\gn$, then
$$\int_X \bigotimes_{\epsilon\in [k]} \mathcal{C}^{|\epsilon|} f_\epsilon \,d\mu_{\gn}^{[k]} = 0.$$
\end{lemma}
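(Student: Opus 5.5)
We will compare the IP cubic measure $\mu^{[k]}_{\gn}$ with the usual Host--Kra cubic measure $\mu^{[k]}$ associated with the action of $W:=V_{\gn}$, which is built inductively as relatively independent self-joinings over the $W$-invariant factors $\mathcal{I}_W(\XX^{[j]})$. The key observation is that every $W$-invariant function on $\XX^{[j]}$ is an eigenfunction whose eigenvalue restricts trivially to $W$. Such an eigenvalue lies in $\mathrm{Ann}(\gn)\subseteq \mathrm{J}_{\gn}$ by Lemma \ref{junta:properties}. Hence $\mathcal{I}_W(\XX^{[j]})\subseteq \mathrm{J}_{\gn}(\XX^{[j]})$, so the IP construction joins over a larger factor at each step. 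Conversely, the junta factor is generated by eigenfunctions whose eigenvalues, restricted to $W$, lie in the countable group $\mathcal{J}_{\gn}$. Thus $\mathrm{J}_{\gn}(\XX^{[j]})$ is contained in the factor spanned by $W$-eigenfunctions with eigenvalues in $\Sigma_{\gn}$, i.e.\ in the Kronecker factor $Z^1(\XX^{[j]},W)$ of the $W$-action. The plan is then a Host--Kra style induction showing that, for $k\ge 2$, the integral against $\mu^{[k]}_{\gn}$ is controlled by $\|f_\epsilon\|_{U^k(\XX,W)}$.

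The inductive estimate I aim for is a Cauchy--Schwarz--Gowers inequality
$$\left|\int \bigotimes_{\epsilon\in[k]}\mathcal{C}^{|\epsilon|}f_\epsilon\,d\mu^{[k]}_{\gn}\right|\le \prod_{\epsilon}\left(\int \bigotimes_{\eta\in[k]}\mathcal{C}^{|\eta|}f_\epsilon\,d\mu^{[k]}_{\gn}\right)^{1/2^k},$$
which holds for any measure obtained by iterated relatively independent self-joinings. It follows by applying Cauchy--Schwarz coordinate by coordinate, exactly as in \cite{host2005nonconventional}. This reduces the lemma to showing that $\|f\|_{U^k(\XX,W)}=0$ implies $\int\bigotimes\mathcal{C}^{|\epsilon|}f\,d\mu^{[k]}_{\gn}=0$.

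For this step I would use $k\ge2$ in the following way. At the final stage,
$$\int\bigotimes_{[k]}\mathcal{C}^{|\epsilon|}f\,d\mu^{[k]}_{\gn}=\big\|\E(F\mid \mathrm{J}_{\gn}(\XX^{[k-1]}))\big\|^2_{L^2(\mu^{[k-1]}_{\gn})},\qquad F=\bigotimes_{[k-1]}\mathcal{C}^{|\epsilon|}f.$$
Since $\mathrm{J}_{\gn}$ is generated by eigenfunctions, this quantity is $\sum_{\lambda}\|P_\lambda F\|^2$ over junta eigenvalues. One checks that the projection of $F$ onto the $\lambda$-eigenspace is expressible through the measure $\mu^{[k-1]}_{\gn}$ twisted by $\lambda$. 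Comparing with the Host--Kra cube, I would show inductively that $\mu^{[j]}_{\gn}$ is the relatively independent product of $\mu^{[j]}$-type pieces over Kronecker-type factors. Concretely, I would prove by induction that for $j\ge1$, $\mu^{[j]}_{\gn}$ is absolutely continuous with respect to a measure sandwiched between $\mu^{[j]}$ and $\mu^{[j+1]}$ restricted to a face. More precisely, the relative product over a factor contained in $Z^1$ can be dominated by the Host--Kra measure one dimension higher, since $Z^1(\XX^{[j-1]})$-joinings are absorbed by $\mu^{[j+1]}$ via the identity $\|\E(F\mid Z^1)\|$ being controlled by the $U^2$ seminorm of $F$. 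This would give
$$\int\bigotimes_{[k]}\mathcal{C}^{|\epsilon|}f\,d\mu^{[k]}_{\gn}\le \|f\|_{U^{k+1}(\XX,W)}^{2^k}$$
at worst; with the $k\ge2$ hypothesis I would then try to sharpen this to $U^k$.

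The main obstacle is exactly that the junta factor exceeds the invariant factor, so naively one only gets control by $U^{k+1}$, which is not enough. To recover $U^k$ I would exploit the specific structure: a junta eigenvalue $\lambda$ is nontrivial on only finitely many $\gamma_n$. Averaging over $\IP$-F\o lner sets with the first $D$ generators removed kills it, as in Proposition \ref{refined}. I would therefore rewrite the cube integral as a limit of IP averages over $\Phi^D$ along the proof of the previous proposition. Then I would use the fact that for $k\ge2$ the lower level already contains $W$-Kronecker components, which are absorbed, so that junta and invariant joinings produce the same null sets. This amounts to showing $\|\cdot\|_{U^k_{\IP}}$ and $\|\cdot\|_{U^k(\XX,W)}$ vanish together, and I expect this is where the real work lies.
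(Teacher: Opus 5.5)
There is a genuine gap: the proposal never proves the implication the lemma is about. After your Cauchy--Schwarz--Gowers reduction you still need that $\|f\|_{U^k(\XX,W)}=0$ forces $\int\bigotimes_{\epsilon\in[k]}\mathcal{C}^{|\epsilon|}f\,d\mu^{[k]}_{\gn}=0$. For this you offer only a heuristic $U^{k+1}$ bound (the absolutely continuous ``sandwiched'' measure is never constructed) plus a hope of sharpening it to $U^k$.

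The route you suggest through IP averages also runs the wrong way. The preceding proposition gives only $\|f\|^{2^k}_{U^k_{\IP}(\XX,\gn)}\le\int\bigotimes_{\epsilon\in[k]}\mathcal{C}^{|\epsilon|}f\,d\mu^{[k]}_{\gn}$. So showing that $U^k_{\IP}$ and $U^k(\XX,W)$ vanish together would not give vanishing of the cube integral.

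Separately, the Cauchy--Schwarz--Gowers inequality for $\mu^{[k]}_{\gn}$ is not automatic for an arbitrary iterated relatively independent joining. Host and Kra use the invariance of the cube measure under the symmetries of $\{0,1\}^k$ to apply Cauchy--Schwarz in every direction, and you have not checked this for joinings over junta factors.

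The missing idea is to make your ``discard the first generators'' intuition precise through finite-index subgroups. For $\ell\ge 1$ let $V_\ell=\mathrm{Span}_{\mathbb{F}_p}\{\gamma_n : n>\ell\}$, which has index at most $p^\ell$ in $W$. Let $\mathrm{J}^{(\ell)}$ be the group of characters with $\xi(\gamma_n)=1$ for all $n>\ell$. Then:
\begin{itemize}
\item $\mathrm{J}_{\gn}=\bigcup_\ell \mathrm{J}^{(\ell)}$ is an increasing union, and the factor generated by $\mathrm{J}^{(\ell)}$ is exactly the $V_\ell$-invariant factor.
\item Hence the cube measure $\mu^{[k]}_\ell$ obtained by joining over the $\mathrm{J}^{(\ell)}$-factors at every level is literally the Host--Kra cube measure of the $V_\ell$-action. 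For that measure, symmetry and Cauchy--Schwarz--Gowers are known.
\item An induction on $k$ using martingale convergence shows $\mu^{[k]}_\ell\to\mu^{[k]}_{\gn}$ as $\ell\to\infty$.
\end{itemize}
The hypothesis $k\ge 2$ enters through Leibman's theorem: for $k\ge2$, $\|f\|_{U^k(\XX,W)}=0$ if and only if $\|f\|_{U^k(\XX,W')}=0$ for every finite-index subgroup $W'\le W$. This fails for $k=1$, for example for $\mathbb{F}_p$ acting on itself by translation with $W'=\{0\}$ and $f$ a nontrivial character. It is not the vague ``absorption of Kronecker components'' you describe. By Leibman's theorem each $\int\bigotimes_{\epsilon\in[k]}\mathcal{C}^{|\epsilon|}f_\epsilon\,d\mu^{[k]}_\ell$ vanishes, and one passes to the limit in $\ell$. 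Without this approximation by Host--Kra measures of finite-index subgroups, your argument has no mechanism for getting from $U^{k+1}$ down to $U^k$.
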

\begin{proof}
    For every $\ell\in\mathbb{N}$, let $\mathrm{J}^{(\ell)}_{\gn}\subseteq \mathrm{J}_{\gn}$ be the subgroup generated by all the characters $\xi$ satisfying that $\xi(\gamma_n)=1$ for all $n\ge \ell$. It is easy to see that $\mathrm{J}^{(\ell)}_{\gn}$ are increasing subgroups and $\mathrm{J}_{\gn}=\bigcup_{\ell=1}^\infty\mathrm{J}^{(\ell)}_{\gn}$ is the direct limit. In particular, for each $\ell$, and every $V$-system $\XX$, we obtain a factor $\mathrm{J}^{(\ell)}_{\gn}(\XX)$ generated by the eigenfunctions whose eigenvalues are in $\mathrm{J}^{(\ell)}_{\gn}$. From the construction, \begin{equation}\label{juntaapprox}\mathrm{J}_{\gn}(\XX)=\bigvee_{\ell\ge 1} \mathrm{J}^{(\ell)}_{\gn}(\XX)
    \end{equation}
    is the minimal factor extending  $\mathrm{J}^{(\ell)}_{\gn}(\XX)$ for all $\ell\ge 1$ (i.e., the inverse limit). For each $\ell\ge 1$, we may define $\XX_{\ell}^{[k]}$ and $\mu_{\ell,\gn}^{[k]}$ the same way as we define $\XX^{[k]}$ and $\mu_{\gn}^{[k]}$, but replacing every instance of $\mathrm{J}_{\gn}(\XX)$ with $\mathrm{J}_{\gn}^{(\ell)}(\XX)$.\\
   \noindent \textbf{Claim:} For every $k\ge 0$, we have
   \begin{itemize}
    \item[(1)] $\lim_{\ell\rightarrow\infty} \mu_{\ell,\gn}^{[k]} = \mu_{\gn}^{[k]}$ in the weak$^*$ topology.
    \item[(2)] $\XX^{[k]}$ is the inverse limit of the increasing sequence of factors $\XX_{\ell}^{[k]}$ as $\ell\rightarrow\infty$.
   \end{itemize}
   \noindent \textbf{Proof of claim:} 
   We prove these two statements simultaneously by induction on $k$. When $k=0$, we have $\XX_\ell^{[0]} = \XX^{[0]}$, and $\mu_{\ell,\gn}^{[0]} = \mu_{\gn}^{[0]}$ for all $\ell\ge 1$, hence the claim follows trivially. Let $k\ge 1$, and assume inductively that we have already established $(1)$ and $(2)$ for smaller values of $k$. We prove $(1)$ for this value of $k$ first. Let $f,g$ be continuous functions on $X^{[k-1]}$ and write
   \begin{align*}
       \tilde{f} = \E_{\mu^{[k-1]}_{\gn}}\left(f\mid \mathrm{J}_{\gn}(\XX^{[k-1]})\right) \intertext{and}
       \tilde{g} = \E_{\mu^{[k-1]}_{\gn}}\left(g\mid \mathrm{J}_{\gn}(\XX^{[k-1]})\right).
   \end{align*}
  Now, observe that $\mathrm{J}_{\gn}(\XX^{[k-1]})$ is the inverse limit of $\mathrm{J}_{\gn}^{(\ell)} (\XX^{[k-1]}_{\ell})$. Indeed, take $h$ measurable with respect to $\mathrm{J}_{\gn}(\XX^{[k-1]})$, then for all $\ell_1$ sufficiently large, it is arbitrarily close to a function in $\mathrm{J}_{\gn}^{(\ell_1)}(\XX^{[k-1]})$, which can then be arbitrarily approximated by $\mathrm{J}_{\gn}^{(\ell_1)}(\XX_{\ell_2}^{[k-1]})$ for all $\ell_2$ sufficiently large. Since all the factors are increasing, the latter is smaller than
   $\mathrm{J}_{\gn}^{(\ell)}(\XX_{\ell}^{[k-1]})$ where $\ell=\max(\ell_1,\ell_2)$. Hence, by the Doob's convergence theorem, we have that $\tilde{f}_{\ell}\rightarrow \tilde{f}$ and $\tilde{g}_{\ell}\rightarrow \tilde{g}$ pointwise $\mu^{[k-1]}_{\gn}$-almost everywhere, where
     \begin{align*}
       \tilde{f}_{\ell} = \E_{\mu^{[k-1]}_{\ell,\gn}}\left(f\mid \mathrm{J}^{(\ell)}_{\gn}(\XX_{\ell}^{[k-1]}) \right)\intertext{and}
       \tilde{g}_{\ell} = \E_{\mu^{[k-1]}_{\ell,\gn}}\left(g\mid \mathrm{J}^{(\ell)}_{\gn}(\XX_{\ell}^{[k-1]})\right) .
   \end{align*}
    By Egorov's theorem, for every $\varepsilon>0$, we can identify a set $A\subseteq X^{[k-1]}$ with $\mu_{\gn}^{[k-1]}(A)<\varepsilon$, such that the above pointwise convergences are uniform outside of $A$. Thus we have
   \begin{align*}\int_{X^{[k]}} f\otimes g\,d\mu_{\gn}^{[k]} &= \int_{X^{[k-1]}} \tilde{f}\cdot \tilde{g} \,d\mu_{\gn}^{[k-1]}\\&\approx_{\varepsilon} \int_{X^{[k-1]}\setminus A} \tilde{f}\cdot \tilde{g}\,d\mu_{\gn}^{[k-1]}\\
   &\approx_{\varepsilon} \int_{X^{[k-1]}\setminus A} \tilde{f}_{\ell}\cdot \tilde{g}_{\ell}\,d\mu_{\ell,\gn}^{[k-1]}\\
   &\approx_{\varepsilon}\int_{X^{[k-1]}} \tilde{f}_{\ell}\cdot \tilde{g}_{\ell}\,d\mu_{\ell,\gn}^{[k-1]}\\&= \int_{X^{[k]}} f\otimes g\,d\mu_{\ell,\gn}^{[k]}
   \end{align*}
   where by $\approx_\varepsilon$ we mean that the difference between the two components is  $o_{\varepsilon\rightarrow0}(1)$, and here $\ell$ is sufficiently large with respect to $\varepsilon$. This proves $(1)$. Property $(2)$ follows from $(1)$, the induction hypothesis and \eqref{juntaapprox}, completing the proof of the claim.

   We return to the proof of the lemma. For every $\ell\ge 1$, let $V_{\ell}$ denote the subspace of $V_{\gn}$ generated by all the elements of $\gn$ other than the first $\ell$ elements. Clearly $V_{\ell}$ has finite index in $V_{\gn}$. Here we need to apply an important result of Leibman, originally established for $\mathbb{Z}$-actions in \cite{leibman-hkz} (see \cite[Lemma A.1]{ABS} for general countable abelian group actions), which asserts that $\|f_{\epsilon}\|_{U^k(\XX,V_{\gn})} = 0$ if and only if $\|f_{\epsilon}\|_{U^k(\XX,V_{\ell})}=0$ for all $\ell\ge 1$. Our main observation now is the fact that $\mathrm{J}^{(\ell)}_{\gn}(\XX)$ is precisely the $V_{\ell}$-invariant factor of $\XX$, and therefore it follows from the work of Host and Kra that 
   $$\int_{\XX} \bigotimes_{\epsilon\in [k]} \mathcal{C}^{|\varepsilon|} f_\epsilon \,d\mu_{\ell,\gn}^{[k]} = 0$$ for all $\ell\ge 0$. Taking $\ell\rightarrow\infty$, the first property of the previous claim gives the desired result.
\end{proof}
\section{Limit formulae along IPs in $\mathbb{F}_p^\omega$-systems}\label{formula:sec}
\subsection{The structure theorem for $\mathbb{F}_p^\omega$-systems}
\begin{definition}[Polynomials and Weyl systems]
    Fix a prime $p$, let $V$ be a countable vector space over $\mathbb{F}_p$, let $\XX=(X,\mathcal{B},\mu,T)$ be a $V$-system, and let $K$ be a compact abelian group.
    \begin{itemize}
        \item[(1)] $\XX$ is called a \emph{rotational $V$-system}\footnote{Also known as Kronecker system in the literature.} if it is isomorphic to a compact abelian $p$-torsion group $U$, equipped with the Borel $\sigma$-algebra, the Haar measure, and an action of $V$ by rotations $T^\gamma u = \phi(\gamma)+u$, where $\phi:V\rightarrow U$ is a homomorphism.
        \item[(2)] The \emph{abelian extension} of $\XX$ by a cocycle $\rho:V\times X\rightarrow K$ is the system $\XX\times_\rho K$, defined as the product $X\times K$, with the product $\sigma$-algebra and the product measure (where $K$ is equipped with the Haar measure), and the action of $V$ is given by 
        $$T_\rho^\gamma (x,k) = (T^\gamma x, \rho(\gamma,x)+k).$$
        \item[(3)] A function $f:X\rightarrow K$ is called a polynomial of degree $<k$ for some $k\in \mathbb{N}\cup\{0\}$, if for every $\gamma_1,\dots,\gamma_k\in V$ we have $\partial_{\gamma_1}\dots\partial_{\gamma_k} f = 0_K$, where here $\partial_\gamma f(x) = f(T^\gamma x)-f(x)$.\footnote{When $K=\mathbb{C}$ we will assume that the derivative is multiplicative, i.e. $\partial_\gamma f(x) = f(T^\gamma x)\cdot\overline{f(x)}$.} Here we use the convention that $f=0$ is a polynomial of degree $<0$. More generally, a cocycle $\rho:V\times X\rightarrow K$ is called a polynomial of degree $<k$ if $\rho(\gamma,\cdot)$ is a polynomial of degree $<k$ for all $\gamma\in V$.
        \item[(4)] $\XX$ is called a \emph{Weyl $V$-system of order $k$} if there exists a rotational system $U_1$, and compact abelian $p$-torsion groups $U_2,\dots,U_{k}$ such that
        \begin{equation}\label{form}\XX = U_1\times_{\rho_1} U_2\times\dots\times_{\rho_{k-1}} U_k\end{equation} where $\rho_i:V\times (U_1\times\dots\times U_{i-1})\rightarrow U_i$ is a polynomial of degree $<i$. A Weyl system is called \emph{continuous} if in addition all the cocycles are continuous.
    \end{itemize}
\end{definition}
Let $V$ be a vector space over $\mathbb{F}_p$, and let $\XX=(X,\mathcal{B},\mu,T)$ be a $V$-system. Let $Z^0(\XX) = (Z^0(X),\mathcal{B}_0,\mu_0,T_0) $ denote the invariant factor of $\XX$. The ergodic decomposition gives rise to ergodic probability measures $\mu_s$ on $\XX$ such that
\begin{equation}\label{ergdec}
    \mu = \int_{Z^0(X)} \mu_s\,d\mu_0(s).
\end{equation}
Furthermore, since the action of $T$ on $Z^0(\XX)$ is trivial, we have $T^\gamma \mu_s = \mu_{T^\gamma s} = \mu_s$. Hence, we can view the ergodic component $\XX_s=(X,\mathcal{B},\mu_s,T)$ as an ergodic $V$-system. It is a classical result that for every $k\ge 0$ almost every ergodic component of $Z^k(\XX)$, is a system of order $k$ (cf. \cite{host2005nonconventional}). In other words, $Z^k(Z^k(\XX)_s) = Z^k(\XX)_s$.
We then have the following structure theorem of Bergelson, Tao and Ziegler \cite{bergelson2010inverse,btz2}.
\begin{theorem}[Structure theorem for $\mathbb{F}_p^\omega$-systems]\label{btz:structure}
    Let $V$ be a vector space over $\mathbb{F}_p$, and let $\XX=(X,\mathcal{B},\mu,T)$ be a $V$-system. Then almost every ergodic component of $Z^k(\XX)$ is isomorphic to a continuous Weyl system of order $k$.
\end{theorem}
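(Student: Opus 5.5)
The statement is Theorem \ref{btz:structure}, which the paper attributes to Bergelson, Tao and Ziegler \cite{bergelson2010inverse,btz2}. My plan is to reduce it to their structure theorem for ergodic systems of order $k$, using the ergodic decomposition \eqref{ergdec}. I would proceed in three steps.

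\textbf{Step 1: pass to ergodic components.} Apply \eqref{ergdec} to the factor $Z^k(\XX)$. By the classical fact recalled just before the theorem (cf. \cite{host2005nonconventional}), almost every ergodic component $Z^k(\XX)_s$ is an ergodic $V$-system of order $k$, that is, $Z^k(Z^k(\XX)_s)=Z^k(\XX)_s$. The Host--Kra construction and the identification $Z^k(\XX)_s = Z^k(\XX_s)$ only use that $V$ is a countable abelian group, so this step carries over verbatim to $V$-actions. The theorem is therefore reduced to showing that every ergodic $V$-system of order $k$ is isomorphic to a continuous Weyl system of order $k$.

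\textbf{Step 2: the ergodic statement, by induction on $k$.} For $k=1$, an ergodic system of order $1$ is the Kronecker factor. It is a rotation on the compact abelian group $U_1$ dual to its group of eigenvalues. Every eigenvalue is a character of $V$, hence $p$-torsion, so $U_1$ is $p$-torsion. For the inductive step, by the general theory (e.g.\ \cite{ABS}), $Z^k$ is an abelian extension of $Z^{k-1}$ by a compact abelian group $U_k$ via a cocycle $\rho$ of type $<k$. By induction, $Z^{k-1}$ is a continuous Weyl system $U_1\times_{\rho_1}\dots\times_{\rho_{k-2}}U_{k-1}$. It then remains to show:
\begin{enumerate}[label=(\roman*)]
\item $U_k$ can be taken $p$-torsion;
\item $\rho$ is cohomologous to a polynomial cocycle of degree $<k$ that is continuous in the Weyl coordinates.
\end{enumerate}

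\textbf{Step 3: the key inputs.} Both (i) and (ii) are exactly the content of the Bergelson--Tao--Ziegler analysis. Their ``exact roots'' and cohomology arguments show that type-$(k-1)$ cocycles on $\mathbb{F}_p^\omega$-systems of order $k-1$ are cohomologous to phase polynomials of bounded degree. They also show that one may reduce to finite-order, hence $p$-torsion, values after splitting off the appropriate divisible parts. Continuity comes from choosing polynomial representatives, which are automatically continuous on the Weyl model, as shown in \cite{btz2}.

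I expect (ii) to be the main obstacle. In low characteristic ($k\ge p$) the polynomial phases of degree $<k$ can be non-classical, with values of order $p^r$, so $p$-torsion of $U_k$ is not automatic. However, since here $k<p$, which is the regime of interest in the paper, I would first invoke the high-characteristic case of \cite{bergelson2010inverse}. There phase polynomials of degree $<k$ take values in the $p$-th roots of unity, and (i) follows directly. I would then cite \cite{btz2} for the general case.
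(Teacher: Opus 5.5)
Your proposal matches the paper, which gives no argument of its own and simply attributes the statement to Bergelson--Tao--Ziegler; your reduction via the ergodic decomposition, using the fact recalled just before the theorem that almost every ergodic component of $Z^k(\XX)$ is of order $k$, to their structure theorem for ergodic systems of order $k$ is exactly the content of that citation. One correction: ``finite-order, hence $p$-torsion'' is not a valid inference, since finite order only gives $p^r$-torsion (as your own last paragraph observes), so the $p$-torsion of the structure groups must be taken from the high-characteristic theorem of \cite{bergelson2010inverse}, which is in any case the only regime the paper uses (ergodic components of $Z^{k-1}$ with $k\le p$).
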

Let $\XX$ be a Weyl system of order $k$ and write it in the form \eqref{form}. For the sake of notational simplicity, we abbreviate $\mathrm{HP}_{0,1,\dots,k}(\XX)$ for $\mathrm{HP}_{0,1,\dots,k}(U_1,\dots,U_k)$. We need two simple lemmas. The first is a vector space analogue of the unique ergodicity result of Parry \cite{Parryunique} for nilmanifolds and $\mathbb{Z}$-action. In our special case we can take advantage of the continuity of polynomials, which simplifies the argument significantly.
\begin{lemma}[Unique ergodicity]\label{uni-erg}
    Let $k\ge 1$, and let $V$ be a vector space over $\mathbb{F}_p$. Then any ergodic continuous Weyl $V$-system $\XX$ is uniquely ergodic. Namely, the tensor product of the Haar measures on the structure groups of $\XX$ is the unique $V$-invariant Borel probability measure on $\XX$.
\end{lemma}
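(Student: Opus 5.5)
We prove unique ergodicity by induction on the order $k$, with a Fourier-analytic argument at each stage. Let $\nu$ be a $V$-invariant Borel probability measure on $\XX = U_1\times_{\rho_1} U_2\times\dots\times_{\rho_{k-1}} U_k$. It suffices to show that $\int \chi\,d\nu = \int\chi\,d\mu$ for every character $\chi=\chi_1\otimes\dots\otimes\chi_k$ of $U_1\times\dots\times U_k$. Such characters span a dense subalgebra of $C(X)$ by Stone--Weierstrass, and the right-hand side is $1$ if $\chi$ is trivial and $0$ otherwise.

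\textbf{Base case and induction setup.} For $k=1$, $\XX=U_1$ is a rotational system $u\mapsto u+\phi(\gamma)$, and ergodicity forces $\phi(V)$ to be dense in $U_1$. Invariance of $\nu$ gives $\hat\nu(\chi)=\chi(\phi(\gamma))\hat\nu(\chi)$ for all $\gamma$. If $\chi\neq 1$, density gives some $\gamma$ with $\chi(\phi(\gamma))\ne 1$, so $\hat\nu(\chi)=0$. Hence $\nu$ is Haar.

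For the inductive step, let $\YY=U_1\times_{\rho_1}\dots\times_{\rho_{k-2}}U_{k-1}$, a factor of $\XX$. It is an ergodic continuous Weyl system of order $k-1$, being a factor of an ergodic system. The pushforward of $\nu$ to $\YY$ is invariant, so by induction it is the Haar (product) measure $\mu_Y$. This settles all characters with $\chi_k$ trivial.

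\textbf{Characters with $\chi_k\neq 1$.} Fix $\chi=\chi'\otimes\chi_k$ with $\chi_k\ne1$ and set $F(y,u)=\chi'(y)\chi_k(u)$. Invariance gives
$$\int F\,d\nu = \int \chi'(T^\gamma y)\,\chi_k(\rho_{k-1}(\gamma,y))\,\chi_k(u)\,d\nu .$$
Disintegrating $\nu$ over $\YY$, write $\nu=\int \nu_y\,d\mu_Y(y)$ and $h(y)=\int_{U_k}\chi_k\,d\nu_y$. Then $h\in L^\infty(\mu_Y)$ and it satisfies
$$\chi'\cdot h = \left(\chi'\circ T^\gamma\right)\cdot \left(\chi_k\circ\rho_{k-1}(\gamma,\cdot)\right)\cdot \left(h\circ T^\gamma\right)$$
$\mu_Y$-a.e., because the fibre measures transform as $\nu_{T^\gamma y}=(\rho_{k-1}(\gamma,y)+\cdot)_*\nu_y$. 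Now set $G=\chi' h$. This $G$ is an $L^2(\mu_Y)$ function solving $G\circ T^\gamma\cdot \chi_k\circ\rho_{k-1}(\gamma,\cdot) = G$. The function $G(y)\overline{\chi_k(u)}$ on $\XX$ is then $T$-invariant with respect to the Haar product measure $\mu$.

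Since $\XX$ is ergodic under $\mu$, this function is a.e. constant. Because $\chi_k\neq1$, it is orthogonal to constants: integrating over the fibre variable $u$ gives zero. Hence $G=0$ a.e., and therefore $\int F\,d\nu=\int \chi' h\,d\mu_Y=0$, which is the required value.

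\textbf{Main obstacle.} The delicate point is the measure-theoretic bookkeeping in the disintegration step. We must verify the transformation rule for the fibre measures $\nu_y$ of a merely invariant measure $\nu$. This is where continuity of the cocycle $\rho_{k-1}$ (and measurability of the disintegration) is used, so that everything is defined $\mu_Y$-a.e. without a null-set issue.

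We must also check that the invariance identity holds for each $\gamma$ simultaneously on a conull set. This is harmless because $V$ is countable. The identification of the base marginal with $\mu_Y$ via the induction hypothesis is exactly what allows us to transfer to the ergodic system $(\XX,\mu)$ and conclude.
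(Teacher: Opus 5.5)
Your strategy works, but the central identity in the case $\chi_k\neq 1$ is not what the fibre rule gives. From $\nu_{T^\gamma y}=(\rho_{k-1}(\gamma,y)+\cdot)_*\nu_y$ you get an equation for $h$ alone:
\[
h(T^\gamma y)=\int_{U_k}\chi_k\bigl(u+\rho_{k-1}(\gamma,y)\bigr)\,d\nu_y(u)=\chi_k\bigl(\rho_{k-1}(\gamma,y)\bigr)\,h(y)\qquad\text{for }\mu_Y\text{-a.e. }y.
\]
Your relation $\chi' h=(\chi'\circ T^\gamma)\,(\chi_k\circ\rho_{k-1}(\gamma,\cdot))\,(h\circ T^\gamma)$ does not follow from this. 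The reason is that $\chi'$ is a character of $U_1\times\dots\times U_{k-1}$, not a $T$-eigenfunction, so nothing relates $\chi'\circ T^\gamma$ to $\chi'$. Consequently $G=\chi'h$ satisfies no cocycle equation you can establish before knowing $h=0$.

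It looks as if you turned the scalar identity $\int F\,d\nu=\int F\circ T^\gamma\,d\nu$ into a pointwise one. That identity reads $\int \chi' h\,d\mu_Y=\int (\chi'\circ T^\gamma)(\chi_k\circ\rho_{k-1}(\gamma,\cdot))\,h\,d\mu_Y$, and you replaced $h$ by $h\circ T^\gamma$. There is also a conjugation slip: a solution of $G\circ T^\gamma\cdot\chi_k\circ\rho_{k-1}(\gamma,\cdot)=G$ would make $G(y)\chi_k(u)$ invariant, not $G(y)\overline{\chi_k(u)}$.

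The repair is to drop $\chi'$:
\begin{itemize}
\item By the display, $H(y,u)=h(y)\overline{\chi_k(u)}$ satisfies $H\circ T^\gamma=H$ $\mu$-a.e. for every $\gamma$. This is where $\pi_*\nu=\mu_Y$ is used.
\item Ergodicity of $\mu$ together with $\int H\,d\mu=0$ forces $h=0$ $\mu_Y$-a.e.
\item Only then does $\chi'$ enter, through $\int F\,d\nu=\int\chi' h\,d\mu_Y=0$.
\end{itemize}
Also, the fibre rule does not use continuity of $\rho_{k-1}$. It follows from uniqueness of disintegration and $T^\gamma$-invariance of $\mu_Y$, with $\rho_{k-1}$ merely Borel.

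With that correction your proof is complete, and it takes a different route from the paper. The paper shows directly that ergodic averages of continuous functions converge at every point. It reduces to $f_\chi(x,u)=f(x)\chi(u)$ and applies van der Corput with the cocycle identity. It then uses the induction hypothesis on the continuous functions $\partial_{\gamma'}f\cdot\chi\circ\rho_{\gamma'}$, which is where continuity of the cocycles is essential, and finishes by asserting $\|f_\chi\|_{U^2(\XX)}=0$.

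Yours is the classical Furstenberg-type argument for compact group extensions. It identifies the invariant measures directly and uses nothing beyond ergodicity of $\mu$. This is a genuine advantage. The paper's last step needs $f_\chi$ to be orthogonal to the Kronecker factor, which fails for instance when the top cocycle does not depend on the base point: then $f_\chi$ with $f\equiv 1$ is an eigenfunction and $\|f_\chi\|_{U^2(\XX)}\neq 0$. Your proof handles that case without change.

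What the paper's route gives directly is everywhere convergence of averages of continuous functions, the form used in Claim 1 of Theorem \ref{Weyllimit}. From your conclusion this follows by the standard weak$^*$-compactness argument along F\o lner sequences. That step, not the disintegration, is where continuity of the cocycles (so that $V$ acts by homeomorphisms) actually matters.
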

\begin{proof}
    We induct on $k$. When $k=1$, $\XX=U_1$ is a rotational system. Write $T^\gamma x = x+\phi(\gamma)$, it follows from the Pontryagin duality that the image of $\phi$ is dense in $\XX$. Indeed, if not then $\phi(V)$ is annihilated by some non-trivial character $\chi:U_1\rightarrow S^1$, but then $\chi$ is a non-constant invariant function by contradiction. Thus, every $V$-invariant measure must be $U_1$-invariant and is therefore the Haar measure. Now, suppose that the claim holds for continuous Weyl systems of order $k-1$ and write $\XX = \XX_{k-1}\times_{\rho} U$ for some compact abelian group $U$, a continuous cocycle $\rho:V\times \XX_{k-1}\rightarrow U$ and a continuous Weyl $V$-system $\XX_{k-1}$ of order $k-1$. It suffices to show that the ergodic average of any continuous function in $C(\XX)$ converges pointwise (everywhere). By Stone-Weierstrass theorem, the algebra generated by $f_\chi(x,u) = f(x)\chi(u)$ where $f:\XX_{k-1}\rightarrow \mathbb{C}$ is continuous and $\chi\in \widehat{U}$, is dense in $C(\XX)$. Hence, it suffices to prove pointwise convergence for these functions. If $\chi$ is trivial, then the convergence follows by the induction hypothesis. Otherwise, suppose that $\chi\neq 1$, fix some $(x_0,u_0)\in \XX$, and let $\Phi=(\Phi_N)_{N\in\mathbb{N}}$ be any F\o lner sequence of $V$. Then, by the van der Corput lemma and the cocycle identity (i.e., $\rho(\gamma+\gamma',x) = \rho(\gamma,x)\cdot\rho(\gamma',T^\gamma x)$), we have
    \begin{align*}\limsup_{N\rightarrow\infty} \left|\E_{\gamma\in \Phi_N} T^\gamma f_\chi(x_0,u_0)\right|^2 &\le \limsup_{M\rightarrow\infty}\E_{\gamma'\in \Phi_M}\left|\limsup_{N\rightarrow\infty}\E_{\gamma\in \Phi_N}f_\chi(T^{\gamma+\gamma'}(x_0,u_0))\cdot \overline{f_\chi(T^\gamma (x_0,u_0))}\right|^2 \\&=\limsup_{M\rightarrow\infty}\E_{\gamma'\in \Phi_M}\left|\limsup_{N\rightarrow\infty}\E_{\gamma\in \Phi_N} (\partial_{\gamma'} f\cdot \chi\circ\rho_{\gamma'}) (T^\gamma x_0)\right|^2
    \end{align*}
    where $\partial_{\gamma'}f = f\circ T^{\gamma'}\cdot \overline{f}$. Since $ \partial_{\gamma'} f\cdot \chi\circ\rho_{\gamma'}$ is continuous, we see by the induction hypothesis that the inner $\limsup$ converges to the integral and we get
    \begin{align*}
        \limsup_{M\rightarrow\infty}\E_{\gamma'\in \Phi_M}\left|\int \partial_{\gamma'}f\cdot \chi\circ \rho_{\gamma'}\,d\mu\right|^2 &=\limsup_{M\rightarrow\infty}\E_{\gamma'\in \Phi_M}\left|\int \partial_{\gamma'}f_{\chi}\,d\mu\right|^2\\&=\norm{f_{\chi}}_{U^2(\XX)} =0.
    \end{align*}
    Therefore, $\lim_{N\rightarrow\infty}\E_{\gamma\in \Phi_N} T^\gamma f_\chi(x_0,u_0) = \int f_{\chi}\,d\mu = 0$, and this completes the proof.
\end{proof}
The following lemma is folklore. 
\begin{lemma}[The Junta factor is a rotational system]
      Let $V$ be a vector space over $\mathbb{F}_p$, let $k\ge 1$, and let $\XX$ be a $V$-system. Let $\gn$ be a sequence of elements in $V$, and suppose that $\XX$ is ergodic with respect to the action of $V_{\gn}:=\mathrm{Span}_{\mathbb{F}_p}\left(\gn\right)$. Then, $\mathrm{J}_{\gn}(\XX)$ is isomorphic to a rotational $V$-system.
\end{lemma}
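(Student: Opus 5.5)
We prove that $\mathrm{J}_{\gn}(\XX)$ is a factor generated by eigenfunctions with pairwise distinct eigenvalues, each eigenspace one-dimensional, and then apply the classical Halmos--von Neumann argument to realize it as a rotation on a compact abelian $p$-torsion group. Write $\mathcal{E}\subseteq \mathrm{J}_{\gn}$ for the set of characters $\lambda$ with $L^2_\lambda(\mu)\neq 0$.

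\textbf{Step 1: eigenspaces are one-dimensional and $\mathcal{E}$ is countable.} Fix $\lambda\in\mathcal{E}$ and take $f,g\in L^2_\lambda(\mu)$. Since $T^\gamma$ acts by unimodular multiplication on eigenfunctions, $|f|$ is $V$-invariant, hence $V_{\gn}$-invariant, hence constant by ergodicity. Then $f\bar g$ is $V$-invariant, hence constant. So $L^2_\lambda(\mu)=\mathbb{C}f_\lambda$ with $|f_\lambda|=1$, and $L^2_\lambda\perp L^2_{\lambda'}$ for $\lambda\neq\lambda'$. Separability of $L^2(\mu)$ (as $X$ is compact metric) forces $\mathcal{E}$ to be countable.

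\textbf{Step 2: $\mathcal{E}$ is a group.} If $\lambda,\lambda'\in\mathcal{E}$, then $f_\lambda f_{\lambda'}$ is a nonzero eigenfunction with eigenvalue $\lambda\lambda'\in\mathrm{J}_{\gn}$, and $\overline{f_\lambda}$ has eigenvalue $\bar\lambda$. Thus $\mathcal{E}$ is a subgroup of $\mathrm{J}_{\gn}$. Since every character of $V$ takes values in the $p$-th roots of unity, $\mathcal{E}$ is a countable discrete $p$-torsion group. Let $U=\widehat{\mathcal{E}}$; this is a compact metrizable abelian $p$-torsion group.

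\textbf{Step 3: construct the rotation and the isomorphism.} Define $\phi:V\to U$ by $\phi(\gamma)(\lambda)=\lambda(\gamma)$; this is a homomorphism. The rotational system is $u\mapsto u+\phi(\gamma)$ on $(U,\mu_U)$. To identify it with $\mathrm{J}_{\gn}(\XX)$, we must choose the normalizations of the $f_\lambda$ so that $\lambda\mapsto f_\lambda$ is multiplicative almost everywhere.

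This choice is the main obstacle. Since $f_\lambda f_{\lambda'}=c(\lambda,\lambda')f_{\lambda\lambda'}$ for constants $c(\lambda,\lambda')\in S^1$, $c$ is a $2$-cocycle on $\mathcal{E}$, and we need it to be a coboundary. Instead of solving this abstractly, we would use the standard trick of mapping points to characters. Pick Borel representatives of the $f_\lambda$ for a countable generating set and define $\pi:X\to \widehat{\mathcal{E}}$ modulo the cocycle. The cleaner route is to choose a basis: $\mathcal{E}$ is an $\mathbb{F}_p$-vector space, so it has a basis $(\lambda_j)_j$. Choose $f_{\lambda_j}$ and note that $f_{\lambda_j}^p$ is invariant and of modulus one, hence constant; rescale so that $f_{\lambda_j}^p=1$. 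Then define $f_{\sum a_j\lambda_j}=\prod f_{\lambda_j}^{a_j}$ (written additively), which gives a multiplicative choice.

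Next set $\pi(x)(\lambda)=f_\lambda(x)$. Off a null set this lies in $U$ after discarding countably many null sets, and the relation $\pi(T^\gamma x)=\pi(x)+\phi(\gamma)$ holds. The pushforward $\pi_*\mu$ is a measure on $U$ whose Fourier coefficients are $\int f_\lambda\,d\mu=0$ for $\lambda\neq 1$, because $f_\lambda$ is orthogonal to the constants. Hence $\pi_*\mu=\mu_U$. Finally, $\pi^{-1}$ of the Borel $\sigma$-algebra is generated by the $f_\lambda$, so it is exactly $\mathrm{J}_{\gn}(\XX)$, and characters of $U$ pull back to the $f_\lambda$, which span $L^2(\mathrm{J}_{\gn}(\XX))$. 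This shows $\pi$ is an isomorphism of $\mathrm{J}_{\gn}(\XX)$ with the rotational system $(U,\phi)$.
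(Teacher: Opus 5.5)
Your proof is correct, but it takes a different route from the paper. The paper's proof is three sentences long. First, by classical Halmos--von Neumann theory, the Kronecker factor of the ergodic system $\XX$ (the factor generated by all eigenfunctions) is a rotational system. Second, since $\mathrm{J}_{\gn}(\XX)$ is generated by a subset of those eigenfunctions, it is a factor of the Kronecker factor. Third, a factor of an ergodic rotation is again a rotation, namely a quotient $U/H$.

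You instead carry out the Halmos--von Neumann construction directly on the group $\mathcal{E}$ of junta characters that actually occur as eigenvalues. In general, the one nontrivial step of that theorem is normalising the eigenfunctions so that $\lambda\mapsto f_\lambda$ is multiplicative. This amounts to showing that a symmetric $S^1$-valued $2$-cocycle is a coboundary, which uses the divisibility of $S^1$. You handle this step explicitly via $p$-torsion: you choose an $\mathbb{F}_p$-basis $(\lambda_j)$ of $\mathcal{E}$, normalise so that $f_{\lambda_j}^p=1$, and extend multiplicatively.

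The paper's version is shorter, but it rests on two black-box classical facts. It also relies implicitly on the factor inheriting ergodicity, which is what makes ``a factor of a rotation is a rotation'' valid. Your version is self-contained and produces an explicit model $\mathrm{J}_{\gn}(\XX)\cong\widehat{\mathcal{E}}$ with rotation $\phi(\gamma)(\lambda)=\lambda(\gamma)$. This makes it transparent that the group is metrizable and $p$-torsion, as the paper's definition of a rotational system requires.

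Two cosmetic points. First, delete the abandoned sentence about mapping points to characters ``modulo the cocycle''. Second, if you choose Borel representatives of the $f_{\lambda_j}$ that take values in the $p$-th roots of unity everywhere, then $\pi(x)$ is a genuine character of $\mathcal{E}$ for every $x$. Null sets then only need to be discarded for the countably many equivariance identities $f_{\lambda_j}\circ T^{\gamma}=\lambda_j(\gamma)f_{\lambda_j}$.
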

\begin{proof}
    It is a classical result that the maximal rotational factor (the Kronecker factor) is the factor generated by all the eigenfunctions of $\XX$. Since $\mathrm{J}_{\gn}(\XX)$ is generated by a subset of these eigenfunctions, it is a sub-factor of the rotational factor of $\XX$. Finally, any factor of a rotational system is itself a rotational system. This completes the proof.
\end{proof}
Now suppose that $\XX$ is an ergodic Weyl system written as in \eqref{form}. From the previous lemma, we see that $\mathrm{J}_{\gn}(\XX)$ is a quotient of $U_1$. However, since $U_1$ is $p$-torsion, any closed subgroup has a complement. In other words, there exists a decomposition $U_1 = \mathrm{J}_{\gn}(\XX) \times U_1'$, for some closed subgroup $U_1'\le U_1$. This observation leads to the following definition.
\begin{definition}[The IP-Hall-Petresco groups]\label{HP:def}
      Let $V$ be a vector space over $\mathbb{F}_p$, let $k \ge 1$, and let $\gn\subseteq V$. Suppose that $\XX$ is a Weyl $V_{\gn}$-system of order $m$. For $\XX$ written in the form  \eqref{form} and $U_1 = \mathrm{J}_{\gn}(\XX)\times U_1'$, we define $$\mathrm{HP}_{0,1,\dots,k}^{\IP(\gn)}(\XX) = \mathrm{HP}_{0,0,\dots,0}(\mathrm{J}_{\gn}(\XX))\times \mathrm{HP}_{0,1,\dots,k}(U_1',U_2,\dots,U_m).$$
      As usual, this compact abelian group is equipped with the Borel $\sigma$-algebra, and Haar measure.
\end{definition}

Our main goal is to establish the following limit formula.
\begin{theorem}[Double limit formula for continuous Weyl systems]\label{Weyllimit}
    Let $V$ be a vector space over $\mathbb{F}_p$, let $k,m\geq 1$, and let $\XX$ be a continuous Weyl system of order $m$. Let $\gn$ be a sequence of elements in $V$, and let $\Phi=(\Phi_N)_{N\in\mathbb{N}}$ be an increasing F\o lner sequence. Finally, let $\mu = \int_{Z^0_{\gn}(\XX)} \mu_s\,d\mu^0_{\gn}(s)$ denote the ergodic decomposition of $\XX$ with respect to the action of $V_{\gn}\coloneqq\mathrm{Span}_{\mathbb{F}_p}(\gn)$.  
    Then for all bounded functions $f_0,\dots,f_k\in L^\infty(\mu)$, we have
    \begin{align*}\lim_{D\rightarrow\infty} \lim_{N\rightarrow\infty} &\E_{\IP_{\Phi_N^D}(\gn)} \int_{\XX} \prod_{i=0}^k T^{i\gamma} f_i\,d\mu \\&= \int_{Z^0_{\gn}(\XX)}\int_{\mathrm{HP}_{0,1,\dots,k}^{\IP(\gn)}(\XX_s)} \bigotimes_{i=0}^k f_i(\bm{x})\,d\mu_{\mathrm{HP}_{0,1,\dots,k}^{\IP(\gn)}(\XX_s)}(\bm{x})\,d\mu_{\gn}^0(s),
    \end{align*}
    where the ergodic components $\XX_s$, for almost all $s\in Z^0_{\gn}(\XX)$, are assumed to be written as in \eqref{form}.
\end{theorem}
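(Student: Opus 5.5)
First, reduce to the ergodic case and to the generating subgroup. Only elements of $V_{\gn}$ occur in the averages, so I restrict the action to $V_{\gn}$ and disintegrate $\mu=\int\mu_s\,d\mu^0_{\gn}(s)$. For each fixed $D,N$ the average is linear in the measure, so it suffices to prove the formula for each $\XX_s$. The passage of $\lim_{D}\lim_{N}$ through $\int d\mu^0_{\gn}(s)$ follows from dominated convergence, since all quantities are bounded by $\prod\|f_i\|_\infty$. By Theorem \ref{btz:structure}, together with the fact that $\XX$ is already a Weyl system, a.e. $\XX_s$ is an ergodic continuous Weyl $V_{\gn}$-system of order $m$, written as in \eqref{form}. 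By Lemma \ref{uni-erg} it is uniquely ergodic. So from now on $\XX$ is ergodic for $V_{\gn}$ and $\gn$ spans the acting group.

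Next, reduce to continuous functions and then to characters. Both sides are multilinear and bounded in $L^{k+1}$-type norms uniformly in $N,D$. The right side is controlled because each coordinate projection of the HP group pushes its Haar measure to the Haar measure on $\XX$. So by density I may take $f_i$ continuous, and by Stone–Weierstrass, products $f_i=\prod_j\chi_{ij}(u_j)$ of characters on $U_1\times\dots\times U_m$.

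The core computation uses the polynomial structure of the orbit. Because the cocycles are polynomials of degree $<j$, for a fixed point $x$ the map $n\mapsto T^{n\gamma}x$, restricted to the $U_j$-coordinate, is a polynomial of degree $\le j$ in $n$ (this uses $k<p$ and $p$-torsion, as in \cite{btz2}). Hence $(T^{i\gamma}x)_{i=0}^k\in \mathrm{HP}_{0,1,\dots,k}(U_1,\dots,U_m)$. Write its polynomial coefficients as $u_0=x$ and $u_j=u_j(\gamma,x)$. The first coefficient $u_1(\gamma,x)$ in the $U_1$-component equals $\phi(\gamma)$. Splitting $U_1=\mathrm{J}_{\gn}(\XX)\times U_1'$, its $\mathrm{J}_{\gn}$-part is $\pi(\phi(\gamma))$. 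By Proposition \ref{refined} applied on the rotation $U_1$, this part becomes trivial in the double limit: every character of $\mathrm{J}_{\gn}(\XX)$ is a junta and is eventually $1$ on $\IP_{\Phi_N^D}$ for $D$ large. This explains the factor $\mathrm{HP}_{0,\dots,0}(\mathrm{J}_{\gn}(\XX))$.

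It remains to show that, after integrating over $x$ and averaging over $\gamma\in\IP_{\Phi^D_N}$, the remaining coefficients equidistribute in the Haar measure of $\mathrm{HP}_{0,1,\dots,k}(U_1',U_2,\dots,U_m)$. Concretely, I must show that a nontrivial character $\Xi$ of $\mathrm{HP}^{\IP(\gn)}_{0,\dots,k}(\XX)$, evaluated along $(T^{i\gamma}x)_i$, averages to $0$. For this I would express $\Xi$ as $\prod_i f_i(T^{i\gamma}x)$ with character-type $f_i$, and argue by cases. If $\Xi$ depends only on $U_1$-data, the average is a linear average $\E\,\xi(\gamma)$ over the IP for a non-junta character $\xi$ (nontrivial on $U_1'$). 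This vanishes by \eqref{avgtoprod}.

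Otherwise some $f_i$ has a nontrivial component in $U_j$ for $j\ge2$. In that case I would use Proposition \ref{seminormcontrol:prop} and Lemma \ref{measurecontrol} to reduce to showing that the relevant function is orthogonal to $Z^{k-1}$ with respect to $V_{\gn}$. Alternatively, I would run a van der Corput induction with Lemma \ref{vdc}, which lowers the degree $m$ at each step. This step, identifying exactly which character combinations are killed, is the main obstacle. The difficulty is matching the IP van der Corput differencing with the HP-group constraints so that a character vanishes in the limit precisely when it is nontrivial on the IP-Hall–Petresco group. I would handle it by induction on $m$, mirroring the Bergelson–Tao–Ziegler computation for the ordinary limit formula \cite{btz2}. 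The only modification is replacing the $U_1$-rotation average by the junta projection of Proposition \ref{refined}. Unique ergodicity (Lemma \ref{uni-erg}) lets me pass the inner integrals to everywhere convergence, so that continuous test functions of the differenced system can be used at each stage.
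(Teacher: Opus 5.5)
Your preliminary reductions are sound and essentially match the paper's. These are: the ergodic decomposition over $V_{\gn}$; density plus Stone--Weierstrass to reduce to characters; the observation that $(T^{i\gamma}x)_{i=0}^k$ is a Hall--Petresco progression; and the removal of the $\mathrm{J}_{\gn}(\XX)$-part, since junta characters are eventually trivial on $\IP_{\Phi_N^D}(\gn)$. Your case where $\Xi$ sees only $U_1$-data is also correct via \eqref{avgtoprod}.

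The gap is the remaining case, which is the heart of the theorem and is not proved. Take a character living on $U_1\times\dots\times U_{k-1}$ that is nontrivial on $\mathrm{HP}^{\IP(\gn)}_{0,1,\dots,k}(\XX)$ and involves some $U_j$ with $2\le j\le k-1$. Your first route cannot handle it. Such a function is measurable with respect to $Z^{k-1}(\XX,V_{\gn})$, so its $U^k$-seminorm does not vanish. Proposition \ref{seminormcontrol:prop} together with Lemma \ref{measurecontrol} only disposes of components on $U_k,\dots,U_m$. Your second route, an IP van der Corput induction on $m$ modelled on Bergelson--Tao--Ziegler, is only named. It does not transfer automatically: their limit formula rests on equidistribution along genuine F\o lner sequences (ergodicity and unique ergodicity of the fibres), and $\IP_{\Phi_N^D}(\gn)$ is not F\o lner. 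You would have to show by hand, through every differencing step and the $D$-shift, that the resulting linear characters are non-juntas exactly when $\Xi$ is nontrivial on the IP--Hall--Petresco group. As written, the proposal asserts the conclusion at this point.

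The idea you are missing is to turn the multiple IP average into a \emph{single} IP average on an auxiliary system. The paper sets $\tilde X=\mathrm{HP}_{0,1,\dots,k}(\XX)$, with fibres $\tilde X_x$ of progressions starting at $x$, acted on by $\tau=\mathrm{Id}\times T\times\dots\times T^k$. Then $\prod_i T^{i\gamma}f_i$ becomes $\tau^\gamma\bigl(\bigotimes_i f_i\bigr)$. Proposition \ref{refined}, applied to $\tau$, identifies the double limit with the conditional expectation onto the junta factor $\mathrm{J}_{\gn}(\tilde X_x)$. Two structural facts then finish the argument:
\begin{itemize}
\item $\tau$ is ergodic on every fibre; this comes from the ordinary BTZ limit formula plus Lemma \ref{uni-erg}.
\item $\mathrm{J}_{\gn}(\tilde X_x)$ is generated by $\mathrm{J}_{\gn}(\XX)^{k+1}$. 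This is proved by passing to the finite-codimension subspaces spanned by tails of $\gn$, whose ergodic components are again Weyl systems, and applying BTZ there.
\end{itemize}
So for $f_i$ orthogonal to $\mathrm{J}_{\gn}(\XX)$, the junta projection coincides with the invariant projection, namely $\int\bigotimes_i f_i\,d\tilde\mu_x$, which is the right-hand side. No character-by-character equidistribution is needed. If you adopt this route, be careful with the passage from the diagonal point $(x,\dots,x)$ to the fibre measure $\tilde\mu_x$. Proposition \ref{refined} gives convergence in $L^2(\tilde\mu_x)$, not at a single point, and the paper's write-up is terse there.
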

\begin{proof}
    By the ergodic decomposition, it suffices to assume without loss of generality that the action of $V_{\gn}$ on $\XX$ is ergodic. Then, we need to show that
    \begin{equation}\label{formula}
    \begin{split}    
    \lim_{D\rightarrow\infty} \lim_{N\rightarrow\infty} &\E_{\gamma\in\IP_{\Phi_N^D}(\gn)} \int_{\XX} \prod_{i=0}^k T^{i\gamma} f_i\,d\mu \\&=\int_{\mathrm{HP}_{0,1,\dots,k}^{\IP(\gn)}(\XX)} \bigotimes_{i=0}^k f_i(\bm{x})\,d\mu_{\mathrm{HP}_{0,1,\dots,k}^{\IP(\gn)}(\XX)}(\bm{x}).
    \end{split}
\end{equation}
By Fourier analysis, it suffices to establish this claim in the case where $f_i$ are characters of the group $U_1\times\dots\times U_m$. Recall that $U_1 = \mathrm{J}_{\gn}(\XX)\times U_1'$, and so we can write $f_i = \chi_i \cdot f_i'$ where $\chi_i:\mathrm{J}_{\gn}(\XX)\rightarrow S^1$ is a character and $f_i'$ is orthogonal to $\mathrm{J}_{\gn}(\XX)$. Observe that
$$\prod_{i=0}^k T^{i \gamma} f_i = \prod_{i=0}^k \chi_i\cdot \prod_{i=0}^k T^{i\gamma}f_i'$$ for all $\gamma\in \IP_{\Phi_N^D}\big(\gn\big)$, whenever $D$ is sufficiently large. Hence, we may assume without loss of generality that all the $f_i$ are orthogonal to $\mathrm{J}_{\gn}(\XX)$ and we need to show that 
\begin{align*}\lim_{D\rightarrow\infty} \lim_{N\rightarrow\infty} &\E_{\gamma\in\IP_{\Phi_N^D}(\gn)} \int_{\XX} \prod_{i=0}^k T^{i\gamma} f_i\,d\mu \\&=\int_{\mathrm{HP}_{0,1,\dots,k}(U_1',U_2,\dots,U_{m})} \bigotimes_{i=0}^k f_i(\bm{x})\,d\mu_{\mathrm{HP}_{0,1,\dots,k}(U_1',U_2,\dots,U_{m})}(\bm{x}).
\end{align*}
Now, for the sake of notational simplicity, we set $\tilde{X}=\mathrm{HP}_{0,1,\dots,k}(\XX)$, and $\tilde{\mu} = \mu_{\mathrm{HP}_{0,1,\dots,k}(\XX)}$. For every $x\in X$, let $\tilde{X}_x \subseteq \tilde{X}$ denote the subset of all elements whose first coordinate is $x$. We let $\tilde{\mu}_x$ denote the Haar measure on $\tilde{X}_x$, and observe that $\tilde{\mu}=\int_X \tilde{\mu}_x\,d\mu(x)$. Furthermore, we observe that $V$ acts on $\tilde{X}_x$ by the transformation $\tau=I\times T\times\dots\times T^{k}$.\\
\noindent \textbf{Claim 1:} The action of $\tau$ on $\tilde{X}_x$ is ergodic for all $x\in X$.
\begin{proof}
This follows from the work of Bergelson, Tao and Ziegler \cite[Lemma 1.8]{btz2}. Indeed, they proved that for every F\o lner sequence $\Phi=(\Phi_N)_{N\in\mathbb{N}}$, we have
$$\lim_{N\rightarrow\infty}\E_{\gamma\in \Phi_N} \int_{X} \bigotimes_{i=0}^kg_i(\tau^\gamma(x,x,\dots,x))\,d\mu(x) = \int_{\tilde{X}} \bigotimes_{i=0}^kg_i\,d\tilde{\mu},$$ for all bounded functions $g_0,\dots,g_k\in L^\infty(\mu)$. Since $g_0$ is arbitrary, we see that for every $x\in X$, 
$$\E_{\gamma\in \Phi_N}  \bigotimes_{i=0}^kg_i(\tau^\gamma(x,x,\dots,x))\rightarrow \int_{\tilde{X}_x} \bigotimes_{i=0}^k g_i\,d\tilde{\mu}_x,$$ weakly as $N\rightarrow\infty$. By the mean ergodic theorem, this weak convergence is in fact a norm convergence. Furthermore, since $\tilde{X}_x$ is also a Weyl system, it is uniquely ergodic (Lemma \ref{uni-erg}). Thus, we have pointwise convergence of the ergodic average for all continuous $g_0,\dots,g_k$. Since the convergence of ergodic averages on continuous functions is equivalent to ergodicity, we deduce that the action of $\tau$ on $\tilde{X}_x$ is ergodic for all $x\in X$.
\end{proof}
\noindent \textbf{Claim 2:}  Let $x\in X$ be arbitrary. A function in $L^2(\tilde{\mu}_x)$ is measurable with respect to $\mathrm{J}_{\gn}(\tilde{X}_x)$ (where the latter is defined with respect to the action of $\tau$) if and only if it is measurable with respect to the meet $\mathrm{J}_{\gn}(\XX)^{k+1}\land \tilde{X}_x$.
\begin{proof}
The following direction is simple. If $f$ is measurable with respect to $\mathrm{J}_{\gn}(\XX)^k\land \tilde{X}_x$, then it can be approximated by functions of the form $f_0\otimes\dots\otimes f_k$ where $f_i$ is a $T$-eigenfunction whose eigenvalue $\chi_i\in \mathrm{J}_{\gn}$. But then, $f_0\otimes\dots\otimes f_k$ is a $\tau$-eigenfunction with eigenvalue $\prod_{i=0}^k \chi_i^i$. Since $\mathrm{J}_{\gn}$ is a group, we deduce that $f$ is measurable with respect to $\mathrm{J}_{\gn}(\tilde{X}_x)$, as required. To prove the other inclusion, we make an observation. For every $\ell>0$, let $\mathrm{J}_{\ell,\gn}(\tilde{X}_x)$ denote the factor generated by the eigenfunctions whose eigenvalue is trivial on $\gamma_1,\dots,\gamma_{\ell}$ (i.e., $\xi(\gamma_i)=1$ for all $i=1,\dots,\ell$). Since $\mathrm{J}_{\gn}(\tilde{X}_x) = \bigvee_{\ell\ge 0} \mathrm{J}_{\ell,\gn}(\tilde{X}_x)$, it suffices to prove the claim where $\mathrm{J}_{\gn}$ is replaced with $\mathrm{J}_{\ell,\gn}$ for some fixed $\ell>0$.

Now let $V_{>\ell} =\mathrm{Span}_{\mathbb{F}_p}(\{\gamma_{n+\ell}:n\in\mathbb{N}\})$, and let $V_{\ell}$ denote its complement such that $V_{\gn} = V_{\ell} \oplus V_{>\ell}$. Then any function $f$, that is measurable with respect to $\mathrm{J}_{\ell,\gn}(\tilde{X}_x)$, is $V_{>\ell}$-invariant. 

Let $h\in L^2(\mu)$, and let $U_1$ denote the rotational factor of $\XX$. Then as before, we can write $U_1 = U_1^{(\ell)}\times U_1^{(>\ell)}$, where $U_1^{(\ell)} = \mathrm{J}_{\ell,\gn}(\XX)$. We see that $h$ is measurable with respect to $\mathrm{J}_{\ell,\gn}(\XX)$, if and only if it is $V_{>\ell}$-invariant and thus also invariant to $U_1^{(>\ell)}\times U_2\times\dots\times U_m$. We deduce that the ergodic components of $\XX$ with respect to $V_{>\ell}$ are of the form $\{x\}\times \XX/U_1^{(\ell)}$, for all $x\in U_1^{(\ell)}$. Hence, since the induced action of $V_{>\ell}$ on these components is ergodic, it follows from the work of Bergelson, Tao, and Ziegler that the induced action of $V_{>\ell}$ by $\tau$ on the components $(x_0,\dots,x_k)+\mathrm{HP}_{0,1,\dots,k}(U_1^{(>\ell)},U_2,\dots,U_m)$, where $x_i\in U_1^{(\ell)}$ for all $i=0,\dots,k$, is ergodic. We conclude that a function $f$, measurable with respect to $\mathrm{J}_{\ell,\gn}(\tilde{X}_x)$, must be invariant to the action of $\mathrm{HP}_{0,1,\dots,k}(U_1^{(>\ell)},U_2,\dots,U_m)$, and is therefore measurable with respect to $\mathrm{HP}_{0,1,\dots,k}(U_1^{(\ell)}) = \mathrm{J}_{\ell,{\gn}}(\XX)^{k+1}\cap \tilde{X}_x$.
\end{proof}
We return to the proof of the theorem. Recall that our functions $f_0,\dots,f_k$ are orthogonal to the Junta factor $\mathrm{J}_{\gn}(\XX)$, and so from Claim $2$ we have that
$$\E\left(\bigotimes_{i=0}^k f_i \mid \mathrm{J}_{\gn}(\tilde{X}_x)\right)  = \E\left(\bigotimes_{i=0}^k f_i \mid \mathcal{I}_{x,\tau}\right),$$ where $\mathcal{I}_{x,\tau}$ is the invariant $\sigma$-algebra. From Claim $1$, the right hand side of the equation above is equal to the integral with respect to $\tilde{\mu}_x$. Combining this with Corollary \ref{refined}, we see that 
\begin{align*}\lim_{D\rightarrow\infty} \lim_{N\rightarrow\infty} \E_{\gamma\in\IP_{\Phi_N^D}(\gn)} \int_{\XX}& \prod_{i=0}^k T^{i\gamma} f_i\,d\mu\\ &= \lim_{D\rightarrow\infty}\lim_{N\rightarrow\infty} \E_{\gamma\in \IP_{\Phi_N^D}(\gn)} \int_{\tilde{X}} \tau^\gamma \bigotimes_{i=0}^k f_i\,d\tilde{\mu}\\&= \int_X \int_{\tilde{X}_x} \lim_{D\rightarrow\infty} \lim_{N\rightarrow\infty}\E_{\gamma\in\IP_{\Phi_N^D}(\gn)} \tau^\gamma \bigotimes_{i=0}^k f_i \,d\tilde{\mu}_x\,d\mu(x)\\&= \int_X \E\left(\bigotimes_{i=0}^k f_i \mid \mathrm{J}_{\gn}(\tilde{X}_x)\right)\,d\mu(x)\\&= \int_X \int_{\tilde{X}_x} \bigotimes_{i=0}^k f_i\,d\tilde{\mu}_x\,d\mu(x),
\end{align*}
and the last summand is equal (by construction) to the right hand side of \eqref{formula}. This completes the proof.
\end{proof}
\section{Proof of Theorem \ref{qualitativemain}}\label{proof}
 Recall that $c_{\IP}=c_{\IP}^{\rec}\le c^{\rec}=c$. We split the proof into multiple inequalities. We begin by proving the following one.
\begin{lemma}\label{c<d}
Let $p$ be a prime number and $k\le p$ an integer. Then for every $\delta\in [0,1]$ we have $c^{\rec}(k,\delta,\mathbb{F}_p^\omega) \le \dAPk(\delta)$. Thus, all the constants $c_{\IP},c_{\IP}^{\rec}$ and $c$ are bounded above by $\dAPk$.
\end{lemma}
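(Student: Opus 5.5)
The plan is to build, for a given $\delta$ and any admissible data $(U_1,\dots,U_m,f)$ in the definition of $\dAPk$, a $V$-system $\XX$ and a set $A$ with $\mu(A)\ge\delta$ such that $\mu\bigl(\bigcap_{i=0}^{k-1}T^{-i\gamma}A\bigr)$ is essentially $S^k_{U_1,\dots,U_m}(f)$ for every $\gamma\neq 0$. Taking the infimum over $(U_1,\dots,U_m,f)$ then gives $c^{\rec}\le\dAPk$. The remaining inequalities for $c_{\IP},c_{\IP}^{\rec},c$ follow from $c_{\IP}=c_{\IP}^{\rec}\le c^{\rec}=c$. Two reductions come first. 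By Lemma \ref{function-set} we may assume $f=1_B$ for a measurable $B\subseteq \mathcal U=U_1\times\dots\times U_m$. By regularity we may also approximate $B$ (or $f$) by a set depending on finitely many coordinates, at a cost of $\varepsilon$ in both the density and the HP-count, since the count is continuous in $L^1$. We may additionally take $m=k-1$, because polynomials of degree $\ge k-1$ evaluated at $0,\dots,k-1$ contribute no new constraints beyond degree $k-2$ (extra $U_j$ with $j\ge k-1$ act as free coordinates).

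Next, generalizing the construction used in the proof of the upper bound in Theorem \ref{quantitative}, we realize the Hall-Petresco structure as a Weyl system in which every nonzero $\gamma$ produces an equidistributed Hall-Petresco progression. Let $X=\prod_{j=1}^m U_j^{\N^j}$, written as coordinates $x^{(j)}_{\mathbf a}$ indexed by $j$-tuples of naturals. For $\gamma\in\mathbb F_p^\omega$, let $T^\gamma$ act polynomially, with top-level coordinate $y$ shifted by $\sum_{|\mathbf a|}\binom{\cdot}{\cdot}$-type multilinear expressions in $\gamma$ and lower coordinates. Concretely, we define the observable $\Psi(x)\in\mathcal U$ so that $\Psi(T^{n\gamma}x)=\sum_{j}\binom{n}{j}u_j(\gamma,x)$, where $u_j(\gamma,\cdot)$ is a $j$-fold "multilinear form" in $\gamma$ with random coefficients. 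The requirement is that for each fixed $\gamma\neq 0$, the tuple $(u_0,u_1(\gamma,x),\dots,u_m(\gamma,x))$ is Haar-distributed on $\mathcal U_0\times\dots\times\mathcal U_m$. Then setting $A=\Psi^{-1}(B)$ gives $\mu(A)=\mu_{\mathcal U}(B)\ge\delta$, and $\mu(\bigcap_i T^{-i\gamma}A)$ equals the Haar measure of $\{(P(0),\dots,P(k-1))\in B^k\}$, which is exactly $S^k(1_B)$. The simplest concrete choice is a skew-product (continuous Weyl system, as in Theorem \ref{btz:structure}) $U_1'\times_{\rho_1}U_2'\times\dots$, with $U_j'$ large products of copies of $U_j$, and cocycles $\rho_j(\gamma,\cdot)$ polynomial of degree $<j$.

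The main obstacle is verifying equidistribution of the coefficients $u_j(\gamma,\cdot)$ simultaneously for \emph{every} nonzero $\gamma$, not just generically, with the correct binomial structure; this requires $j<p$ so that $\binom{n}{j}$ behaves well mod $p$, which is where $k\le p$ enters. I would try to verify this by choosing the random coefficients indexed by $\N^j$ independently Haar and noting that a nonzero multilinear form in a nonzero $\gamma$ with independent Haar coefficients is Haar-distributed; alternatively, I would use the Bergelson–Tao–Ziegler description (Claim 1 in Theorem \ref{Weyllimit}, \cite[Lemma 1.8]{btz2}) that the joint distribution of $(x,T^\gamma x,\dots)$ on an ergodic Weyl system is Haar on its HP group, applied along a sub-action. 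The final step sends $\varepsilon\to0$ and takes the infimum.
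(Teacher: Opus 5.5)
Your reduction is sound: if for every $\gamma\neq 0$ the coefficient vector $(u_0,u_1(\gamma,\cdot),\dots,u_m(\gamma,\cdot))$ is Haar on $\mathcal U_0\times\dots\times\mathcal U_m$, then the surjective homomorphism $(u_j)_j\mapsto\bigl(\sum_j\binom nj u_j\bigr)_{n=0}^{k-1}$ pushes it to Haar measure on $\mathrm{HP}_{0,1,\dots,k-1}(U_1,\dots,U_m)$, and so $\mu\bigl(\bigcap_i T^{-i\gamma}\Psi^{-1}(B)\bigr)=S^k_{U_1,\dots,U_m}(1_B)$. The gap is that a system with this property is never constructed, and that construction is the entire content of the lemma. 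The routes you sketch for it also do not work as stated.

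(i) You never write down $T^\gamma$. For a skew product of the kind you describe to be a $V$-action, commutativity forces the multilinear data in the cocycles to be symmetric. Already for $m=2$, with $M\in U_2^{\N\times\N}$, $w\in U_2^{\N}$, $y\in U_2$, the maps $T^\gamma(M,w,y)=(M,\,w+M\gamma,\,y+\gamma\cdot w+\tfrac12\,\gamma\cdot M\gamma)$ commute with each other only when $M$ is symmetric. So ``independent Haar coefficients indexed by $\N^j$'' must become independent entries on nondecreasing multi-indices (or be symmetrized). You must also check $T^{p\gamma}=\mathrm{id}$; this is where the divided powers and $j<p$ really enter.

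(ii) The coefficient of $\binom nj$ lies in $\mathcal U_j$, and its $U_i$-components with $i>j$ involve the moving intermediate coordinates. In the example, the $U_2$-part of $u_1$ is $\gamma\cdot w+\tfrac12\,\gamma\cdot M\gamma$. Showing that each top form $\sum_{\mathbf a}c_{\mathbf a}\gamma_{a_1}\cdots\gamma_{a_j}$ is Haar therefore only controls marginals. The joint law needs a triangular ``fresh randomness at each level'' argument.

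(iii) The fallback via \cite[Lemma 1.8]{btz2} cannot work. That lemma is about averages over $\gamma$ along F\o lner sequences, whereas $c^{\rec}$ involves $\sup_{\gamma\neq0}$, so you need the conclusion for every single $\gamma$. In a general Weyl system some nonzero $\gamma$ act degenerately (in a finite one, trivially, giving correlation $\mu(B)$), and along the finite group $\{n\gamma\}$ there is no ergodicity to exploit.

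One way to close the gap without cocycle bookkeeping is the following. Let $X$ be the set of all $Q=(Q_1,\dots,Q_m)$ with $Q_j\colon V\to U_j$ and $\Delta_{h_1}\cdots\Delta_{h_{j+1}}Q_j\equiv 0$ for all $h_1,\dots,h_{j+1}\in V$, where $\Delta_hQ(v)=Q(v+h)-Q(v)$. This is a closed subgroup of $\mathcal U^{V}$, hence a compact metrizable group. Equip it with Haar measure $\mu$ and the translation action $(T^\gamma Q)(v)=Q(v+\gamma)$, which acts by continuous automorphisms and so preserves $\mu$, and set $\Psi(Q)=Q(0)$. For $\gamma\neq 0$, the map $\pi_\gamma\colon Q\mapsto (Q(n\gamma))_{n=0}^{k-1}$ is a continuous homomorphism into the Hall--Petresco group by Newton's forward-difference formula. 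It is onto: choose a linear $\ell\colon V\to\mathbb F_p$ with $\ell(\gamma)=1$ and put $Q_j(v)=\sum_{i\le j}\binom{\ell(v)}{i}u^{(j)}_i$. Here $\binom ti=t(t-1)\cdots(t-i+1)/i!$ makes sense in $\mathbb F_p$ because $i\le m<p$. Hence $\pi_\gamma$ pushes $\mu$ to Haar measure, so $\mu\bigl(\bigcap_{i}T^{-i\gamma}\Psi^{-1}(B)\bigr)=S^k_{U_1,\dots,U_m}(1_B)$ for every $\gamma\neq0$, and likewise $\mu(\Psi^{-1}(B))=\mu_{\mathcal U}(B)$. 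This is your Weyl-system idea made precise. It also makes your finite-coordinate approximation and the reduction to $m=k-1$ unnecessary, since the Hall--Petresco definition already has $m<p$.

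For comparison, the paper uses the rotation $T_{e_i}(\bm g,h)=(\bm g,h+g_i)$ on $G^{\N}\times G$ with $G=\mathrm{HP}_{0,1,\dots,k-1}(U_1,\dots,U_m)$, the set $G^{\N}\times A$, and a change of variables. A rotation only has linear orbits $h+i\,\gamma\cdot\bm g$, so for a product set $G^{\N}\times A'$ it computes the $k$-AP count of $A'$ inside $G$, not a Hall--Petresco count. Your insistence on a genuinely polynomial action is what actually delivers $S^k(1_B)$ when $m\ge 2$.
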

\begin{proof}
Fix some $\delta\in[0,1]$ and let $\varepsilon>0$ be arbitrary. It suffices to show that
    $$c^{\rec}(k,\delta,\mathbb{F}_p^\omega) < \dAPk(\delta)+\varepsilon.$$
    By Lemma \ref{function-set}, we can find compact abelian $p$-torsion groups $U_1,\dots,U_{m}$ and a subset $A\subseteq U_1\times\dots\times U_m$ such that \begin{equation}\label{c<df}
    S^k_{U_1,\dots,U_m}(1_A) < \dAPk(\delta)+\varepsilon/2.
    \end{equation}   
   Let $G = \mathrm{HP}_{0,1,\dots,k-1}(U_1,\dots,U_m)$ and consider the system $X = G^{\mathbb{N}}\times G$ equipped with the product measure and the action of $V=\mathbb{F}_p^\omega$ induced by the maps $T_{e_i}(\bm{g},h) = (\bm{g},h+g_i)$ where $\bm{g}=(g_1,g_2,\dots)\in G^{\mathbb{N}}$ and $h\in G$. Observe that $A$ is embedded in $G$ as trivial progressions. 
   Now, consider the set $B = G^{\mathbb{N}} \times A$, then for every $0\neq\gamma\in V$, we have
    \begin{align*}
        \mu\left(\bigcap_{i=0}^{k-1} T^{-i\gamma} B\right) = \int_{G^{\mathbb{N}}}\int_G \prod_{i=0}^{k-1} 1_A(h+i\cdot\gamma\cdot \bm{g})\,d\mu_G(h)\,d\mu_{G^\mathbb{N}}(\bm{g}).
    \end{align*}
    Whenever $\gamma\neq 0$, we can, for $\mu_{G^{\mathbb{N}}}$-almost every $\bm{g}\in G^{\mathbb{N}}$ perform a change of variables giving
    $$ \mu\left(\bigcap_{i=0}^{k-1} T^{-i\gamma} B\right) = S^k_{U_1,\dots,U_m}(1_A) <\dAPk(\delta)+\varepsilon/2.$$
    This completes the proof.
\end{proof}
\begin{proposition}\label{convexconstants}
    The maps $\delta\mapsto c_{\IP}^{\rec}(k,\delta,\mathbb{F}_p^\omega),\delta\mapsto c_{\IP}(k,\delta,\mathbb{F}_p^\omega),\delta\mapsto c^{\rec}(k,\delta,\mathbb{F}_p^\omega)$ and $\delta\mapsto c(k,\delta,\mathbb{F}_p^\omega)$ are convex for any fixed choice of $k$ and $p$. Thus, all of these constants are bounded from above by $\cdAPk(\delta)$.
\end{proposition}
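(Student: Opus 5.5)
Since $c_{\IP}=c_{\IP}^{\rec}$ and $c=c^{\rec}$ by Theorem \ref{inversecorrespondence}, I will only prove convexity of the two dynamical constants $c^{\rec}$ and $c_{\IP}^{\rec}$. The second assertion then follows at once. Lemma \ref{c<d} gives that each constant is a convex function bounded above by $\dAPk$. By the definition \eqref{convex} of the lower convex envelope as the supremum of such functions, each constant is therefore at most $\cdAPk$.

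For convexity, fix $\delta_1,\delta_2\in[0,1]$ and $t\in[0,1]$, and set $\delta=t\delta_1+(1-t)\delta_2$. I want to show $c^{\rec}(k,\delta)\le t\,c^{\rec}(k,\delta_1)+(1-t)\,c^{\rec}(k,\delta_2)$, and the same for $c_{\IP}^{\rec}$. Fix $\varepsilon>0$ and pick near-extremal examples for each $\delta_j$. For $c^{\rec}$, these are systems $\XX_j$ and sets $A_j$ with $\mu_j(A_j)\ge\delta_j$ and $\mu_j(\bigcap_i T^{-i\gamma}A_j)\le c^{\rec}(k,\delta_j)+\varepsilon$ for all $\gamma\ne0$.

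The natural construction is the disjoint union $X=X_1\sqcup X_2$ with measure $\mu=t\mu_1+(1-t)\mu_2$, the action acting componentwise, and $A=A_1\sqcup A_2$. After re-metrizing, $X$ is still a compact metric space with a regular Borel measure, so this is a legitimate $\mathbb{F}_p^\omega$-system. Then $\mu(A)\ge\delta$, and since both components are invariant,
\[\mu\Big(\bigcap_{i=0}^{k-1}T^{-i\gamma}A\Big)=t\,\mu_1\Big(\bigcap_i T^{-i\gamma}A_1\Big)+(1-t)\,\mu_2\Big(\bigcap_i T^{-i\gamma}A_2\Big).\]
Taking the supremum over $\gamma\neq0$ gives a value at most $t\,c^{\rec}(k,\delta_1)+(1-t)\,c^{\rec}(k,\delta_2)+\varepsilon$. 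Letting $\varepsilon\to0$ gives convexity of $c^{\rec}$.

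The main obstacle is the IP constant, because the near-extremal examples come with their own sequences $(\gamma^{(1)}_n)$ and $(\gamma^{(2)}_n)$, and one needs a single sequence that is bad for both systems simultaneously. I would first use the symmetry of $\mathbb{F}_p^\omega$. Choose a linear automorphism $L_1$ mapping $V_{(\gamma^{(1)}_n)}$ into the even-indexed coordinates, and an automorphism $L_2$ mapping $V_{(\gamma^{(2)}_n)}$ into the odd-indexed coordinates. Precomposing the actions with these automorphisms does not change the measures of the relevant intersections. Since $V_{(\gamma^{(j)}_n)}$ may have infinite codimension, one may first replace $\mathbb{F}_p^\omega$ by $\mathbb{F}_p^\omega\oplus\mathbb{F}_p^\omega$ and let the extra summand act trivially; this ensures the required automorphisms exist.

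Next, I would make system $j$ act only through the coordinates assigned to it. Let $\pi_j$ be the projection onto the coordinates assigned to $j$, and let $\XX_j$ be acted on via $\gamma\mapsto T_j^{\pi_j\gamma}$. Then interleave the sequences, $\gamma_{2n}=\gamma^{(1)}_n$ and $\gamma_{2n+1}=\gamma^{(2)}_n$. Every nonzero element of the combined IP set has the form $\gamma=\gamma'+\gamma''$, where $\gamma'\in\IP(\gamma^{(1)})$, $\gamma''\in\IP(\gamma^{(2)})$, and at least one of the two is nonzero. On $X_1$ the element $\gamma$ acts as $\gamma'$, and on $X_2$ it acts as $\gamma''$.

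The trouble is the case where one of $\gamma',\gamma''$ is $0$. If, say, $\gamma''=0$, then component $2$ contributes $\mu_2(A_2)$ instead of the recurrence value. To kill this I would restrict to sub-IPs. The sets where $\gamma'=0$ or $\gamma''=0$ are excluded once we pass to the IP generated by the sums $\gamma_{2n}+\gamma_{2n+1}$, because every nonzero finite sum of these has nonzero components in both systems. An infimum over sequences may use this sequence, so
\[c_{\IP}^{\rec}(k,\delta)\le t\,c_{\IP}^{\rec}(k,\delta_1)+(1-t)\,c_{\IP}^{\rec}(k,\delta_2)+\varepsilon.\]
The remaining subtlety is multiplicity: a nonzero element $\gamma^{(1)}_F$ might still be $0$ as a multiset entry. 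This is handled by the convention in the footnote to the definition of $\IP^*$, or by discarding degenerate generators.
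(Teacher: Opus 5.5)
Your proof is correct. For $c^{\rec}$ it is the paper's argument: the disjoint union $\XX_1\sqcup\XX_2$ with $\mu=t\mu_1+(1-t)\mu_2$ and $A=A_1\sqcup A_2$. The deduction of the bound by $\cdAPk$, from $c^{\rec}\le\dAPk$ plus convexity, is also the paper's.

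The difference is in the IP constant. The paper applies the same disjoint union and simply asserts that $c_{\IP}^{\rec}$ is convex as well. It does not address the point you identified: the near-extremal triples $(\XX_j,A_j,(\gamma^{(j)}_n))$ come with different sequences, and along $\IP((\gamma^{(1)}_n))$ nothing controls the second component. Your decoupling step supplies exactly this missing ingredient, so on this point your write-up is more complete than the paper's.

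For Theorem \ref{qualitativemain} itself the omission is harmless. There $c_{\IP}^{\rec}\le c^{\rec}\le\cdAPk$, together with the separately proved lower bound $c_{\IP}^{\rec}\ge\cdAPk$, already forces $c_{\IP}^{\rec}=\cdAPk$. The proposition as stated, however, needs either your argument or that observation.

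Two small points on your IP step.

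First, the automorphisms and the doubling trick can be skipped. Identify $\mathbb{F}_p^\omega$ with $\mathbb{F}_p^\omega\oplus\mathbb{F}_p^\omega$. Let $(v,w)$ act on $X_1$ by $T_1^{v}$ and on $X_2$ by $T_2^{w}$. Then take the single sequence $\beta_n=(\gamma^{(1)}_n,\gamma^{(2)}_n)$, so that $\beta_F=(\gamma^{(1)}_F,\gamma^{(2)}_F)$ for every finite $F$.

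Second, your sentence that every nonzero finite sum of the paired generators has nonzero components in both systems is false in general: $\gamma^{(1)}_F$ can vanish for a nonempty $F$. The relevant invariant is the index set, not the value. A sum over a nonempty $F$ projects to sums over the same nonempty $F$ in each factor. Under the paper's multiset convention for $\IP\setminus\{0\}$, every such sum, zero or not, lies in the range of the supremum controlled by the near-extremal bound. Phrased this way, your final ``multiplicity'' caveat and any discarding of generators are unnecessary.
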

\begin{proof}
    Let $\lambda\in[0,1]$ and let $\delta = \lambda\cdot\delta_1+(1-\lambda)\cdot \delta_2$ where $\delta,\delta_1,\delta_2\in[0,1]$. Consider any two $V$-systems $\XX_1=(X_1,\mu_1,T_1),\XX_2=(X_2,\mu_2,T_2)$ and any two subsets $A_1\subseteq X_1$ and $A_2\subseteq X_2$ with measures $\mu_1(A_1)\ge \delta_1$ and $\mu_2(A_2)\ge \delta_2$. We can form a system $\XX_\lambda = \XX_1\sqcup \XX_2$ equipped with the action $$T^\gamma x = \begin{cases} T^\gamma_1 x & x\in X_1
    \\
    T_2^\gamma x & x\in X_2\end{cases}$$ and the measure $\mu = \lambda\mu_1 + (1-\lambda)\mu_2$. Then we can consider the set $A=A_1\sqcup A_2$ and observe that 
    $$\mu\left(\bigcap_{i=0}^{k-1} T^{-i\gamma} A \right)=\lambda\cdot \mu_1\left(\bigcap_{i=0}^{k-1} T_1^{-i\gamma} A_1 \right)+(1-\lambda)\cdot \mu_2\left(\bigcap_{i=0}^{k-1} T_2^{-i\gamma} A_2 \right),$$ meanwhile $\mu(A) \geq \lambda\delta_1+(1-\lambda)\delta_2 = \delta$. We conclude that  $\delta\mapsto c_{\IP}^{\rec}(\delta)$ and $\delta\mapsto c^{\rec}(\delta)$ are convex. The other two constants are convex since they are equal to these constants by the inverse Furstenberg correspondence principle.
\end{proof} 
Finally, we prove the lower bound. Since $c_{\IP}^{\rec}$ is the minimal constant, it suffices to prove the following inequality.
\begin{proposition}\label{c>d}
Let $p$ be a prime and $k\le p$ an integer. Then for every $\delta\in[0,1]$ we have
$$c_{\IP}^{\rec}(k,\delta,\mathbb{F}_p^\omega) \ge \cdAPk(\delta).$$   
\end{proposition}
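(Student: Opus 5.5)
We argue by contradiction, following the proof of Lemma \ref{simple}. Suppose that for some $V=\mathbb{F}_p^\omega$-system $\XX=(X,\mathcal{B},\mu,T)$, a set $A$ with $\mu(A)\ge\delta$, a sequence $\gn$, and some $\varepsilon_0>0$ we have $\mu(\bigcap_{i=0}^{k-1}T^{-i\gamma}A)\le \cdAPk(\delta)-\varepsilon_0$ for all $\gamma\in\IP(\gn)\setminus\{0\}$. Taking $\Phi_N=[1,N]$, every average over $\IP_{\Phi_N^D}(\gn)$ is then at most $\cdAPk(\delta)-\varepsilon_0$. (Only the single term $\gamma=0$, coming from the empty sum, is excluded; it has weight $2^{-|\Phi_N^D|}\to0$. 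Repeated zeros in the multiset are handled by the footnote convention.) It therefore suffices to show
\begin{equation*}
\lim_{D\to\infty}\lim_{N\to\infty}\E_{\gamma\in\IP_{\Phi_N^D}(\gn)}\mu\Big(\bigcap_{i=0}^{k-1}T^{-i\gamma}A\Big)\ \ge\ \cdAPk(\delta).
\end{equation*}
Both limits exist; this is part of what we establish below.

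\textbf{Step 1: reduction to $Z^{k-2}(\XX,V_{\gn})$.} First we may replace the $V$-action by its restriction to $V_{\gn}$, since only that subgroup is used. By Proposition \ref{seminormcontrol:prop} (with $k-1$ functions, after integrating against $f_0$), the averages are controlled by $\|\cdot\|_{U^{k-1}_{\IP}(\XX,\gn)}$. By the proposition bounding IP seminorms by cubic measures, together with Lemma \ref{measurecontrol}, any $f$ with $\|f\|_{U^{k-1}(\XX,V_{\gn})}=0$ has zero IP seminorm. This needs $k-1\ge2$, which holds since $k\ge3$. Hence, for every $D$, replacing each copy of $1_A$ by $f=\E(1_A\mid Z^{k-2}(\XX,V_{\gn}))$ changes nothing in the limit. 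Note that $0\le f\le1$ and $\int f\,d\mu\ge\delta$.

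\textbf{Step 2: structure and limit formula.} Apply the ergodic decomposition of $Z^{k-2}$ with respect to $V_{\gn}$, together with Theorem \ref{btz:structure}. Almost every component $\XX_s$ is then a continuous Weyl system $U_1\times_{\rho_1}\cdots\times U_{k-2}$. After approximating $f$ in $L^2$ by continuous functions (the multilinear averages are uniformly Lipschitz in $L^2$ for bounded functions), Theorem \ref{Weyllimit} gives the double limit as
\begin{equation*}
\int_{Z^0_{\gn}}\int_{\mathrm{HP}^{\IP(\gn)}_{0,1,\dots,k-1}(\XX_s)}\bigotimes_{i=0}^{k-1}f\,d\mu_{\mathrm{HP}^{\IP}}\,d\mu^0_{\gn}(s).
\end{equation*}
By Definition \ref{HP:def}, $\mathrm{HP}^{\IP}=\mathrm{HP}_{0,\dots,0}(J_s)\times\mathrm{HP}_{0,1,\dots,k-1}(U_1',U_2,\dots)$ with $J_s=\mathrm{J}_{\gn}(\XX_s)$. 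For each $j\in J_s$, set $f_{s,j}=f(j,\cdot)$ on $U_1'\times U_2\times\cdots$. By Fubini, the inner integral equals $\int_{J_s}S^k_{U_1',U_2,\dots}(f_{s,j})\,d\mu_{J_s}(j)$.

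\textbf{Step 3: convexity.} Each $f_{s,j}$ takes values in $[0,1]$ on a product of compact abelian $p$-torsion groups. By Definition \ref{dAPk:def}, the number of factors is at most $k-2<p$, which is admissible, and we pad with trivial groups if needed. Hence $S^k(f_{s,j})\ge\dAPk(\delta_{s,j})\ge\cdAPk(\delta_{s,j})$, where $\delta_{s,j}=\int f_{s,j}$. Since $\int\!\int\delta_{s,j}\,dj\,ds=\int f\,d\mu\ge\delta$, Jensen's inequality for the convex function $\cdAPk$ gives a lower bound of $\cdAPk(\int f)$. Since $\cdAPk$ is nondecreasing (it is the convex envelope of a nondecreasing function), this is at least $\cdAPk(\delta)$, which yields the desired contradiction.

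\textbf{Main obstacle.} I expect Step 2 to be the hardest: legitimately applying Theorem \ref{Weyllimit} to $Z^{k-2}$, a measurable family of Weyl components. This requires measurable selection of the isomorphisms, continuity of the functions, and interchanging the double limit with the ergodic-decomposition integral. I would handle the interchange by dominated convergence, and the non-continuous functions by an $L^2$-approximation argument that is uniform in $N$ and $D$.
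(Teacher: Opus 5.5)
Your proposal is correct and follows essentially the same route as the paper. Both argue by contradiction along the double IP limit over $\Phi_N^D$, reduce to a Host--Kra factor via Proposition \ref{seminormcontrol:prop} and Lemma \ref{measurecontrol}, evaluate with Theorem \ref{Weyllimit}, split off the junta coordinate by Fubini, and finish with $\dAPk\ge\cdAPk$ and Jensen. The differences are minor: you project onto the sharper factor $Z^{k-2}(\XX,V_{\gn})$ before decomposing, which is why you need $k\ge 3$, whereas the paper projects each ergodic component onto $Z^{k-1}$. You also make explicit the empty-sum term and the monotonicity of $\cdAPk$, which the paper uses tacitly in its final step.
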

\begin{proof}

    Assume by contradiction that this claim fails for some $\delta>0$ and some $k\le p$. Thus, there exists some $\varepsilon>0$ sufficiently small, such that $c_{\IP}^{\rec}(k,\delta,\mathbb{F}_p^\omega) < \cdAPk(\delta)-\varepsilon$.
    From the definition, there exists an $\mathbb{F}_p^\omega$-system $\XX=(X,\mathcal{B},\mu,T)$, a measurable subset $A\subseteq X$ with $\mu(A)\ge \delta$, and some $\gn\subseteq \mathbb{F}_p^\omega$, such that 
    \begin{equation}\label{small}
        \mu(A\cap T^{-\gamma}A\cap\dots\cap T^{-(k-1)\gamma}A) < \cdAPk(\delta)-\varepsilon
    \end{equation}
    for all $\gamma\in \IP(\gn)\setminus \{0\}$. But then we must have
   $$
    \lim_{N\rightarrow\infty} \EIP \mu(A\cap T^{-\gamma}A\cap\dots\cap T^{-(k-1)\gamma}A) \le \cdAPk(\delta)-\varepsilon
    $$
    for all increasing F\o lner sequences $\Phi=(\Phi_N)_{N\in\mathbb{N}}$. In particular,
  \begin{equation}\label{contradictionfinal}
    \lim_{D\rightarrow\infty}\lim_{N\rightarrow\infty} \E_{\IP_{\Phi_N^D}(\gn)} \mu(A\cap T^{-\gamma}A\cap\dots\cap T^{-(k-1)\gamma}A) \le \cdAPk(\delta)-\varepsilon
    \end{equation}
    
    Let $V=V_{\gn}$ denote the subspace of $\mathbb{F}_p^\omega$ generated by $\gn$, and let $(Z^0_{\gn}(\XX),\mu^0_{\gn})$ be the $V_{\gn}$-invariant factor. The ergodic decomposition gives rise to ergodic measures $\mu_s$ on $\XX$ such that 
    \begin{equation}\label{ergodic-decomposition}
    \mu = \int_{Z^0_{\gn}(\XX)} \mu_s\,d\mu^0_{\gn}.
    \end{equation}
    Let $\XX_s$ denote the $s$-component. Then for every $s\in Z^0_{\gn}$, we can write $1_A = f_s + g_s$, where $f_s = \E_{\mu_s}(1_A \mid Z^{k-1}(\XX_s,V_{\gn}))$ and $g_s = 1_A-f_s$ is the orthogonal complement. Here $Z^{k-1}(\XX_s,V_{\gn})$ is the Host--Kra factor of the ergodic component. From Lemma \ref{measurecontrol} we see that $\|g_s\|_{U^{k}_{\IP}(\XX_s,\gn)} = 0,$ and so by \eqref{seminormcontrol}, we have 
    $$\lim_{N\rightarrow\infty} \EIP \mu_s(A\cap T^{-\gamma}A\cap\dots\cap T^{-(k-1)\gamma}A) = \lim_{N\rightarrow\infty} \EIP \int \prod_{i=0}^{k-1} T^{i\cdot \gamma} f_s \,d\mu_s$$ for $\mu^0_{\gn}$-almost every $s\in Z^0_{\gn}$.
    Combining with \eqref{ergodic-decomposition}, and \eqref{formula} we see that 
    \begin{equation}\label{reductiontoHKfactor}
    \begin{split}
           &\lim_{D\rightarrow\infty} \lim_{N\rightarrow\infty} \E_{\IP_{\Phi_N^D}(\gn)} \mu(A\cap T^{-\gamma}A\cap\dots\cap T^{-(k-1)\gamma}A)  \\&=  \int_{Z^0_{\gn}(\XX)}\left(\lim_{D\rightarrow\infty}\lim_{N\rightarrow\infty}\E_{\IP_{\Phi_N^D}(\gn)}  \int \prod_{i=0}^{k-1} T^{i\cdot \gamma} f_s \,d\mu_s\right)\,d\mu^0_{\gn}(s) \\ &=\int_{Z^0_{\gn}(\XX)}\left(\int_{\mathrm{HP}_{0,1,\dots,k-1}^{\IP(\gn)}(\XX_s)} \bigotimes_{i=0}^{k-1} f_s(\bm{x})\,d\mu_{\mathrm{HP}_{0,1,\dots,k-1}^{\IP(\gn)}(\XX_s)}(\bm{x})\right)\,d\mu_{\gn}^0(s).
        \end{split}
    \end{equation}
    Let $\mathcal{U}'_s\coloneqq U_{1,s}'\times\dots\times U_{k-1,s}$ and write $\XX_s = \mathrm{J}_{\gn}(\XX_s) \times \mathcal{U}'_s$.
    Recall that $\mathrm{HP}^{\IP(\gn)}_{0,1,\dots,k-1}(\XX_s) = \mathrm{HP}_{0,0,\dots,0}(\mathrm{J}_{\gn}(\XX_s))\times \mathrm{HP}_{0,1,\dots,k-1}(U'_{1,s},\dots,U_{k-1,s})$ (apply Definition \ref{HP:def} to $\XX_s$). For $x\in X_s$, write $x=(x_{J},x')$ where $x_{J}\in \mathrm{J}_{\gn}(\XX_s)$ and $x'\in \mathcal{U}'_s$, and write $f_{s,x_J}(x') = f_s(x_J,x')$. Then, \eqref{reductiontoHKfactor} equals
   \begin{align*}
          &\int_{Z^0_{\gn}(\XX)} \int_{\mathrm{J}_{\gn}(\XX_s)}S_{U_{1,s}',\dots,U_{k-1,s}}^k(f_{s,x_J})\,d\mu_{\mathrm{J}_{\gn}(\XX_s)}(x_J)\,d\mu^0_{\gn}(s) \\ &\ge \int_{Z^0_{\gn}(\XX)} \int_{\mathrm{J}_{\gn}(\XX_s)} \dAPk \left(\int_{\mathcal{U}'_s} f_{s,x_J}\,d\mu_{\mathcal{U}'_s}\right)\,d\mu_{\mathrm{J}_{\gn}(\XX_s)}(x_J)\,d\mu^0_{\gn}(s)\\&\ge \int_{Z^0_{\gn}(\XX)} \int_{\mathrm{J}_{\gn}(\XX_s)}\cdAPk\left(\int_{\mathcal{U}'_s} f_{s,x_J}\,d\mu_{\mathcal{U}'_s}\right)\,d\mu_{\mathrm{J}_{\gn}(\XX_s)}(x_J)\,d\mu^0_{\gn}(s)\\&\ge\cdAPk\left(\int_{Z^0_{\gn}(\XX)} \int_{\mathrm{J}_{\gn}(\XX_s)}\int_{\mathcal{U}'_s} f_{s,x_J}\,d\mu_{\mathcal{U}'_s}\,d\mu_{\mathrm{J}_{\gn}(\XX_s)}(x_J)\,d\mu^0_{\gn}(s)\right)\\&=
          \cdAPk\left(\delta\right).
    \end{align*}
    This clearly contradicts \eqref{contradictionfinal}, as required.
\end{proof}

 \appendix
 \section{Random sets}\label{randomsec}
Throughout, fix a prime number $p$ and let $k\le p$. Let $V$ be a countable-dimensional vector space over $\mathbb{F}_p$, and let $\Phi=(\Phi_N)_{N\in\mathbb{N}}$ be a F\o lner sequence for $V$. A \emph{distribution function} is a map $F:V\rightarrow [0,1]$ such that the limit $\lim_{N\rightarrow\infty}\E_{\gamma\in \Phi_N} F(\gamma) =\delta_F$ exists. Every such function gives rise to a random set; indeed, each value of $F(\gamma)$ defines a measure $\mu_{\gamma}$ on $\{0,1\}$ which assigns the value $F(\gamma)$ to $\{1\}$ and $1-F(\gamma)$ to $\{0\}$. Let $\Omega_F = \{0,1\}^V$ be the product space equipped with the product measure $\mu_F = \otimes_{\gamma\in V} \mu_\gamma$.  The coordinate maps $X_\gamma : \Omega_F\rightarrow \{0,1\}$ form an independent sequence of random variables where $X_\gamma$ is Bernoulli with probability $F(\gamma)$, for all $\gamma\in V$. This gives rise to a \emph{random set}, that is, a random variable $E:\Omega_F\rightarrow \mathcal{P}(V)$ defined by 
 \begin{equation}\label{randomE}
     E = \{\gamma\in V : X_\gamma =1\}.
 \end{equation}
\begin{proposition}\label{randomset}
Let $F$ be a distribution function, and let $E$ be as above. Suppose that $\Phi=(\Phi_N)_{N\in\mathbb{N}}$ is tempered, and that $|\Phi_N|>N$ for all $N$. If
$$\lim_{N\rightarrow\infty} \E_{\gamma\in \Phi_N} F(\gamma)F(\gamma+a)\dots F(\gamma+(k-1)a)$$ exists for all $a$ in some subset $A\subseteq\mathbb{F}_p^\omega$, then $\mu_F$-almost surely we have
    \begin{itemize}
        \item[(i)] $d_{\Phi}(E)=\delta_F$.
        \item[(ii)] For every $0\neq a\in A$, $d_{\Phi}(E\cap E-a \cap \dots\cap E-(k-1)a) = \lim_{N\rightarrow\infty} \E_{\gamma\in \Phi_N} F(\gamma)F(\gamma+a)\dots F(\gamma+(k-1)a)$.
    \end{itemize}
 \end{proposition}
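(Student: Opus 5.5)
The plan is a strong law of large numbers along the tempered Følner sequence, with a variance bound that exploits independence of the coordinates $X_\gamma$. Fix $a\in A\setminus\{0\}$ (part (i) is the case $k=1$ of the same argument) and set
$$Y_\gamma = X_\gamma X_{\gamma+a}\cdots X_{\gamma+(k-1)a},\qquad S_N=\E_{\gamma\in\Phi_N}Y_\gamma .$$
Since $a\neq 0$ and $k\le p$, the points $\gamma,\gamma+a,\dots,\gamma+(k-1)a$ are distinct. Independence then gives $\E Y_\gamma=F(\gamma)F(\gamma+a)\cdots F(\gamma+(k-1)a)$, so $\E S_N$ converges by hypothesis to the claimed limit $L_a$. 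It therefore suffices to show $S_N-\E S_N\to0$ almost surely. The upper density $d_\Phi$ is a $\limsup$, and the limit of $S_N$ then equals it.

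The variance estimate comes first. $Y_\gamma$ and $Y_{\gamma'}$ are independent unless the progressions $\{\gamma+ia\}$ and $\{\gamma'+ja\}$ intersect, that is, unless $\gamma'-\gamma\in\{ta: |t|\le k-1\}$. This is a set of at most $2k-1$ differences, so each $\gamma$ has at most $2k-1$ dependent partners. Since $|Y_\gamma|\le1$, this yields
$$\mathrm{Var}(S_N)\le \frac{2k-1}{|\Phi_N|}.$$
Higher moments work the same way. The fourth central moment of $\sum_{\gamma\in\Phi_N}(Y_\gamma-\E Y_\gamma)$ only involves quadruples in which no index is independent of the other three. Such quadruples are covered by two dependent pairs, or a dependent chain, giving $O_k(|\Phi_N|^2)$ terms. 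Hence $\E|S_N-\E S_N|^4=O_k(|\Phi_N|^{-2})=O_k(N^{-2})$, using $|\Phi_N|>N$. This is summable, so Borel--Cantelli and Markov's inequality give $S_N-\E S_N\to0$ almost surely for this fixed $a$.

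Finally, $A\subseteq\mathbb{F}_p^\omega$ is countable, so intersecting the countably many full-measure events over $a\in A\setminus\{0\}$, together with the event for (i), yields a single full-measure set on which (i) and (ii) hold simultaneously.

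The step I expect to need the most care is the combinatorial count in the fourth moment. Temperedness seems unnecessary with this approach, since the hypothesis $|\Phi_N|>N$ alone already makes the fourth-moment bounds summable. If the count becomes messy, a fallback is Hoeffding's inequality: partition $\Phi_N$ into $O(k)$ classes of pairwise independent variables, for example by greedily colouring the dependency graph, which has maximum degree at most $2k-2$. Applying Hoeffding's inequality to each class gives exponential tail bounds $\exp(-c|\Phi_N|\varepsilon^2/k)$, which are summable because $|\Phi_N|>N$.
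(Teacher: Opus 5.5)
Your proposal is correct and follows essentially the same route as the paper: fix $a\neq 0$, center $Y_\gamma=\prod_{i=0}^{k-1} X_{\gamma+ia}$, bound the fourth moment of the average by $O_k(|\Phi_N|^{-2})$ by counting tuples in which no index is independent of the others, and conclude with Markov's inequality, Borel--Cantelli (using $|\Phi_N|>N$), and a countable intersection over $a\in A$. The one difference is in part (i). You treat it as the $k=1$ case of the same moment argument, whereas the paper cites a strong law of large numbers via the pointwise ergodic theorem along tempered F\o lner sequences; your version is self-contained and, as you note, does not need temperedness.
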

 \begin{proof}
Since the countable intersection of co-null sets is co-null, it suffices to prove that each one of these conditions is satisfied almost surely, separately. We start with $(i)$. We have
$$d_{\Phi}(E) = \lim_{N\rightarrow\infty} \frac{|E\cap \Phi_N|}{|\Phi_N|} = \lim_{N\rightarrow\infty} \E_{\gamma\in \Phi_N} X_\gamma = \lim_{N\rightarrow\infty} \E_{\gamma\in \Phi_N} \E(X_\gamma)$$ where the last equality holds almost surely by the strong law of large numbers (where the version along tempered F\o lner sequences follows from the pointwise ergodic theorem). Thus, $d_{\Phi}(E)=\delta_F$ almost surely, by definition. To prove $(ii)$, it suffices to fix some $0\neq a\in A$ and show that
$$d_{\Phi}(E\cap E-a \cap \dots\cap E-(k-1)a) =  \lim_{N\rightarrow\infty}  \E_{\gamma\in \Phi_N} F(\gamma)F(\gamma+a)\dots F(\gamma+(k-1)a)$$ almost surely for that particular $a$. For any $\gamma\in V$, let $$Y_{\gamma} = \prod_{i=0}^{k-1} X_{\gamma+ia}.$$ Since $a\neq 0$, and $k\le p$, we see that $(X_{\gamma+ia})_{i=0}^{k-1}$ are independent random variables and so \begin{equation}\label{EY}
    \E Y_\gamma = \prod_{i=0}^{k-1}\E X_{\gamma+ia} = \prod_{i=0}^{k-1} F(\gamma+ia).
\end{equation}
Throughout, we let $\mu_{a,\gamma}$ denote the element on the right-hand side of the equation above.
Observe that we have
$$d_{\Phi}(E\cap E-a\cap\dots\cap E-(k-1)a) = \lim_{N\rightarrow\infty} \E_{\gamma\in \Phi_N} Y_\gamma.$$
Our next goal is to mimic the proof of the (strong) law of large numbers in this particular scenario (note that $(Y_\gamma)_{\gamma\in V}$ may not be independent). Let $$\overline{Y}_N = \E_{\gamma\in \Phi_N} (Y_\gamma-\mu_{a,\gamma}).$$ 
We have $\E(\overline{Y}_N) =  \E_{\gamma\in \Phi_N} \left(\E(Y_\gamma)-\mu_{a,\gamma} \right)= \E_{\gamma\in \Phi_N} \left(\prod_{i=0}^{k-1} F(\gamma+ia)-\mu_{a,\gamma}\right) = 0$. 
Our goal is to show that
$$\mathbb{P}(\lim_{N\rightarrow\infty} \overline{Y}_N = 0) =1.$$ Fix some $\varepsilon>0$, and let
$A_N = \{|\overline{Y}_N|\ge \varepsilon\}$. If $$\sum_{N=1}^\infty \mathbb{P}(A_N)<\infty, $$ then the claim follows by Borel--Cantelli. To estimate $A_N$, we need a claim.\\
\textbf{Claim:} For every even number $t$, we have $\E((\overline{Y}_N)^t) \le \frac{C(k,t)}{|\Phi_N^{t/2}|}$ for some absolute constant depending only on $k,t$.\\
\textbf{Proof:} We compute
$$\E\left((\overline{Y}_N)^t\right) = \E_{(\gamma_1,\dots,\gamma_t)\in \Phi_N^t}\E\left(\prod_{i=1}^t (Y_{\gamma_i}-\mu_{a,\gamma_i})\right). $$
Write $$B_t = \left\{(\gamma_1,\dots,\gamma_t)\in \Phi_N^t : \forall_{1\le i\le t} \exists_{j\neq i}\text{ such that } \gamma_i-\gamma_j \in \{da:d\in[-(k-1),k-1]\} \right\}$$
and observe that if $(\gamma_1,\dots,\gamma_t)\notin B_t$, then there exists some random variable $Y_{\gamma_i}$ that is independent of all the rest. In that case, we have $\E\left(\prod_{i=1}^t (Y_{\gamma_i}-\mu_{a,\gamma_i})\right) = \E\left(Y_{\gamma_i}-\mu_{a,\gamma_i}\right)\cdot\E\left(\prod_{j\neq i}(Y_{\gamma_j}-\mu_{a,\gamma_j})\right) = 0$. On the other hand, since every coordinate has another coordinate satisfying the condition in the definition of $B_t$, we must be able to split the elements $(\gamma_1,\dots,\gamma_t)$ into pairs, where the elements of each pair are close to each other. Therefore, $|B_t| < C(k,t)\cdot |\Phi_N^{t/2}|$, where $C(k,t)$ is the number of ways of splitting $(\gamma_1,\dots,\gamma_t)$ into pairs times $(2k+1)$ instances for $\gamma_i-\gamma_j$ for every pair. For all those $(\gamma_1,\dots,\gamma_t)\in B_t$, we have the obvious bound $\E\left(\prod_{i=1}^t (Y_{\gamma_i}-\mu_{a,\gamma_i})\right)\le 1$. We conclude that
\begin{align*}
     \E\left((\overline{Y}_N)^t\right)&=  \E_{(\gamma_1,\dots,\gamma_t)\in \Phi_N^t}\E\left(\prod_{i=1}^t (Y_{\gamma_i} - \mu_{a,\gamma_i})\right) \\&\le\frac{1}{|\Phi_N|^t}\sum_{(\gamma_1,\dots,\gamma_t)\in  B_t} 1\\
    &= \frac{|B_t|}{|\Phi_N|^t} \\&= \frac{C(k,t)}{|\Phi_N|^{t/2}}.
\end{align*}
This proves the claim. $\square$

Returning to the proof of the proposition, by Markov's inequality we have
$$\mathbb{P}(A_N)\le \frac{1}{\varepsilon^4}\cdot\E\left((\overline{Y}_N)^4\right)\le \frac{C(k,4)}{\varepsilon^4\cdot |\Phi_N|^2}.$$ Hence, since $|\Phi_N|>N$, this is a summable sequence and the claim follows by Borel--Cantelli.
 \end{proof}

 In a similar manner, we have the following result.
 \begin{lemma}\label{randombound}
     For every $k\ge 3$, and every $\delta\in[0,1]$, we have
     $$c(k,\delta,\mathbb{F}_p^\omega) \le \delta^k.$$
 \end{lemma}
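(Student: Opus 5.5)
We prove $c(k,\delta,\mathbb{F}_p^\omega)\le\delta^k$ by exhibiting a random set, using Proposition \ref{randomset} with the constant distribution function. We fix $\delta\in[0,1]$ and the Følner sequence $\Phi_N=\mathbb{F}_p^N$, viewed as a subspace of $\mathbb{F}_p^\omega$ via the first $N$ coordinates. We then set $F\equiv\delta$ and apply Proposition \ref{randomset} with $A=\mathbb{F}_p^\omega\setminus\{0\}$.

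First we check the hypotheses. $\Phi$ is increasing and nested, hence tempered; indeed $|\bigcup_{j<N}\Phi_j-\Phi_N|=|\Phi_N|$. We also have $|\Phi_N|=p^N>N$. The limit
\[\lim_{N\to\infty}\E_{\gamma\in\Phi_N}F(\gamma)\cdots F(\gamma+(k-1)a)=\delta^k\]
trivially exists for every $a$, and $\delta_F=\delta$.

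By Proposition \ref{randomset}, $\mu_F$-almost surely we get $d_\Phi(E)=\delta$ and
\[d_\Phi\Bigl(\bigcap_{i=0}^{k-1}(E-ia)\Bigr)=\delta^k \quad\text{for every } a\neq0.\]
Here we use that $\mathbb{F}_p^\omega$ is countable, so the countably many almost-sure events intersect to an almost-sure event. Fixing one such realization $E$ gives
\[\sup_{\gamma\neq0}d_\Phi\Bigl(\bigcap_{i=0}^{k-1}(E-i\gamma)\Bigr)=\delta^k.\]
Since $d_\Phi(E)\ge\delta$, Definition \ref{F:constant} yields $c(k,\delta,\mathbb{F}_p^\omega)\le\delta^k$. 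The edge cases $\delta\in\{0,1\}$ are immediate: for $\delta=0$ take $E=\emptyset$, and for $\delta=1$ the bound is just $1$.

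There is one subtlety, which we expect to be the only real obstacle: the independence step inside Proposition \ref{randomset} requires that the points $\gamma,\gamma+a,\dots,\gamma+(k-1)a$ be distinct. This holds because $k\le p$ and $a\neq0$. If $k>p$ is allowed in the statement, we would instead reduce to $k'=p$ distinct points: the intersection equals $\bigcap_{i<p}(E-ia)$, with density $\delta^p$. That does not give $\delta^k$, so we would assume $k\le p$, as is done throughout the appendix.
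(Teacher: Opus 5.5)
Your proof is correct, but it takes a different route from the paper's.

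The paper works at the level of $c^{\mathrm{rec}}$:
\begin{itemize}
\item It takes the Bernoulli shift on $\{0,1\}^V$ with measure $\mu_\delta^{\otimes V}$ and the cylinder set $A=\{x : x(0_V)=1\}$.
\item It notes that $\mu(\bigcap_{i=0}^{k-1}\sigma^{iv}A)=\delta^k$ for every $v\neq 0$. This uses independence of the distinct coordinates $0,v,\dots,(k-1)v$, and hence again $k\le p$.
\item It deduces $c^{\mathrm{rec}}(k,\delta,\mathbb{F}_p^\omega)\le\delta^k$.
\item It transfers this to $c$ via the inverse correspondence principle (Theorem \ref{inversecorrespondence}), which gives $c=c^{\mathrm{rec}}$.
\end{itemize}

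The author explicitly declines to use Proposition \ref{randomset}, on the grounds that ``the F\o lner sequence is not assumed tempered.'' Your argument shows this obstruction does not arise for the statement as defined. Since $c$ is an infimum over F\o lner sequences, it suffices to exhibit the single tempered, nested sequence $\Phi_N=\mathbb{F}_p^N$. This is the same sequence the paper itself plugs into Proposition \ref{randomset} in the proof of Theorem \ref{quantitative}.

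\textbf{What your route gives.} It is self-contained: it needs only the paper's own fourth-moment/Borel--Cantelli argument and avoids the cited, nontrivial inverse correspondence principle. You also get $c^{\mathrm{rec}}\le\delta^k$ for free from the forward inequality $c^{\mathrm{rec}}\le c$.

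\textbf{What the paper's route gives.} It yields a stronger conclusion. Theorem \ref{inversecorrespondence} holds for every F\o lner sequence, so it produces, for each $\Phi$, a set $E$ with $d_\Phi(E)=\delta$ and all the $k$-fold densities equal to $\delta^k$. The bound therefore holds even for a fixed $\Phi$, not only after taking the infimum over $\Phi$.

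Your caveat about $k\le p$ is well placed. It is a standing assumption of the appendix, both proofs use it, and it is genuinely necessary. For $k>p$ the intersection collapses to $\bigcap_{i=0}^{p-1}(E-i\gamma)$, and the union bound then gives $c(k,\delta,\mathbb{F}_p^\omega)\ge 1-p(1-\delta)>\delta^k$ for $\delta<1$ close to $1$.
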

 \begin{proof}
 It is tempting to set $F(x) = \delta$ for all $x\in \mathbb{F}_p^\omega$ and try to use the previous proposition. However, unfortunately here the F\o lner sequence $\Phi=(\Phi_N)_{N\in\mathbb{N}}$ is not assumed tempered. Instead, we rely on the inverse correspondence principle and show that $c^{\mathrm{rec}}(k,\delta,\mathbb{F}_p^\omega)\le \delta^k$.

Let $X = \{0,1\}^V$ equipped with the Borel $\sigma$-algebra, the measure $\mu = \mu_{\delta}^{\otimes V}$ where $\mu_{\delta}(x) = \begin{cases} \delta & \text{if } x=1 \\ 1-\delta & \text{if }x=0\end{cases}$, and the action of $V$ by shifts $\sigma^v x(u)=x(u+v)$ for all $v,u\in V$. Let $A = \{ x\in X : x(0_V) = 1\}$ be the cylinder set and observe that $\mu(A)=\delta$. Moreover, we have for all non-zero $v\in V$ that
$$\mu\left(\bigcap_{i=0}^{k-1} \sigma^{iv}A\right) = \delta^k.$$ Since 
$c^{\mathrm{rec}}(k,\delta,\mathbb{F}_p^\omega)\le \sup_{v\neq 0}\mu\left(\bigcap_{i=0}^{k-1} \sigma^{iv}A\right),$ the claim follows.
 \end{proof}
 For the sake of the proof of Lemma \ref{function-set} below, we will need the following generalization of Lemma \ref{randomset}. To avoid confusion, given a function $f$ we let $\bigotimes_{i=0}^kf(\bm{x})=\prod_{i=0}^k f(x_i)$ denote the $(k+1)$-fold tensor product of $f$ evaluated at $\bm{x}=(x_0,...,x_k)$.
 \begin{lemma}\label{randomV2}
     Let $m\in \mathbb{N}$ be an integer. Let $V$ be a countable-dimensional vector space over $\mathbb{F}_p$ and write $V=\mathbb{F}_p^\omega$ with respect to some basis. Let $F:V^m\rightarrow[0,1]$ be a distribution function, and let $E\subseteq V^m$ be a random set associated with this distribution. Let $\Phi=(\Phi_N)_{N\in\mathbb{N}}$ be a tempered F\o lner sequence and suppose that $|\Phi_N|>N$ for all $N\in\mathbb{N}$. If the limit
     $$\lim_{N\rightarrow\infty} \E_{\bm{x}\in \mathrm{HP}_{0,1,\dots,k}(\mathbb{F}_p^N,\dots,\mathbb{F}_p^N)}\bigotimes_{i=0}^k F(\bm{x})$$ exists, then $\mu_F$-almost surely we have
     \begin{itemize}
         \item[(i)] $\lim_{N\rightarrow\infty} \frac{|E\cap \prod_{i=1}^m \mathbb{F}_p^N|}{p^{N\cdot m}} =\delta_F$.
         \item[(ii)]
         $\lim_{N\rightarrow\infty} \E_{\bm{x}\in \mathrm{HP}_{0,1,\dots,k}(\mathbb{F}_p^N,\dots,\mathbb{F}_p^N)}\bigotimes_{i=0}^k1_E(\bm{x}) =\lim_{N\rightarrow\infty} \E_{\bm{x}\in \mathrm{HP}_{0,1,\dots,k}(\mathbb{F}_p^N,\dots,\mathbb{F}_p^N)}\bigotimes_{i=0}^k F(\bm{x})$.
     \end{itemize}
 \end{lemma}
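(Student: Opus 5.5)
The plan is to mimic the proof of Proposition \ref{randomset}, replacing arithmetic progressions in $V$ by Hall-Petresco progressions in $V^m$ and the F\o lner averages by uniform averages over the finite groups $\mathrm{HP}_{0,1,\dots,k}(\mathbb{F}_p^N,\dots,\mathbb{F}_p^N)$. Since a countable intersection of co-null sets is co-null, (i) and (ii) can be treated separately. For (i), the variables $X_{\bm{v}}$, $\bm{v}\in(\mathbb{F}_p^N)^m$, are independent Bernoulli with means $F(\bm{v})$. Let $Z_N=p^{-Nm}\sum_{\bm{v}}(X_{\bm{v}}-F(\bm{v}))$. Then the fourth moment satisfies $\E Z_N^4\le 3p^{-2Nm}$, so Markov's inequality and Borel--Cantelli give $Z_N\to0$ almost surely. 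Together with the definition of $\delta_F$ along $\Phi_N=(\mathbb{F}_p^N)^m$, this yields (i).

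For (ii), write $H_N=\mathrm{HP}_{0,1,\dots,k}(\mathbb{F}_p^N,\dots,\mathbb{F}_p^N)$. For $\bm{x}=(x_0,\dots,x_k)\in H_N$, set $Y_{\bm{x}}=\prod_{i=0}^k X_{x_i}$ and $\mu_{\bm{x}}=\E Y_{\bm{x}}$. We then consider $\overline{Y}_N=\E_{\bm{x}\in H_N}(Y_{\bm{x}}-\mu_{\bm{x}})$. Two points need care. First, the coordinates $x_i$ of a progression need not be distinct; then $\E Y_{\bm{x}}\neq\prod_i F(x_i)$. I would show that the set of $\bm{x}\in H_N$ with $x_i=x_j$ for some $i\ne j$ has density $o(1)$ in $H_N$. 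Indeed, $x_i-x_j=\sum_{\ell\ge1}(\binom{i}{\ell}-\binom{j}{\ell})u_\ell$, and already the $\ell=1$ coefficient $i-j\neq0$ in $\mathbb{F}_p$ (using $k<p$) makes this a nontrivial linear condition. It therefore has density at most $p^{-N}$. Hence
\[
\E_{\bm{x}\in H_N}\mu_{\bm{x}}=\E_{\bm{x}\in H_N}\bigotimes_{i=0}^kF(\bm{x})+O(p^{-N}),
\]
and it suffices to prove $\overline{Y}_N\to0$ almost surely.

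For this I would compute $\E\overline{Y}_N^4=\E_{\bm{x}^1,\dots,\bm{x}^4\in H_N}\E\prod_{r}(Y_{\bm{x}^r}-\mu_{\bm{x}^r})$. A term vanishes whenever some $\bm{x}^r$ shares no vertex with the others, since then that factor is independent of the rest and has mean zero. So the sum is restricted to quadruples in which every $\bm{x}^r$ meets some $\bm{x}^{s}$, $s\neq r$. The key counting estimate is that for fixed $\bm{x}^s$ and fixed indices $i,j$, the proportion of $\bm{x}\in H_N$ with $x_i=x^s_j$ is exactly $|\mathcal{U}|^{-1}=p^{-Nm}$. This holds because the map $\bm{x}\mapsto x_i$ is a surjective homomorphism $H_N\to(\mathbb{F}_p^N)^m$: take the polynomial with only a constant term. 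The pairing argument from Proposition \ref{randomset} then gives $\E\overline{Y}_N^4\le C(k)p^{-2Nm}$. This is summable, and Markov's inequality with Borel--Cantelli gives $\overline{Y}_N\to0$ almost surely.

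The main obstacle is this pairing count. Overlap events are "a coordinate of one progression equals a coordinate of another", not differences lying in a fixed finite set as in Proposition \ref{randomset}. I expect the surjectivity of the evaluation maps $\bm{x}\mapsto x_i$ to reduce each such event to a factor $(k+1)^2p^{-Nm}$ via a union bound. The four indices are then grouped into two overlapping pairs, giving the bound $C(k)p^{-2Nm}$. Because the decay is exponential in $N$, the hypotheses that $\Phi$ is tempered and $|\Phi_N|>N$ are not really needed here.
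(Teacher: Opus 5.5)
Your proposal is correct and follows essentially the same route as the paper: a fourth-moment bound with Markov and Borel--Cantelli, after discarding the negligible set of progressions with repeated coordinates, and your surjectivity-of-$\bm{x}\mapsto x_i$ argument supplies the explicit coincidence probability $p^{-Nm}$ that the paper only sketches. One small correction: a quadruple in which one progression meets the other three cannot be split into two overlapping pairs, but such quadruples have probability $O_k(p^{-3Nm})$, so bounding each admissible configuration by a spanning forest with at least two edges still gives $C(k)p^{-2Nm}$.
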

 \begin{proof}
     The first property follows from the exact same proof as in Lemma \ref{randomset}. To prove $(ii)$ we use a similar argument as in the proof of Lemma \ref{randomset}. For the sake of simplicity of notation, we denote $\mathrm{HP}_{k}(W^m) = \mathrm{HP}_{0,1,\dots,k}(W,W,\dots,W)$ for any subspace $W\subseteq V$. For every $\bm{t}\in \mathrm{HP}_{k}(V^m)$, let $Y_{\bm{t}} = \prod_{i=1}^{k}X_{t_i}$, where $t_i$ is the $i^{\mathrm{th}}$-coordinate of $\bm{t}=(t_1,\dots,t_k)$.  Again, for the sake of simplicity of notation we write $\mathrm{HP}'_{k}((\mathbb{F}_p^N)^m))$ for the subset of $\mathrm{HP}_k((\mathbb{F}_p^N)^m)$ consisting of all elements whose coordinates are pairwise distinct. As $N\rightarrow\infty$ we have,
\begin{equation}\label{different}\frac{|\mathrm{HP}'_{k}((\mathbb{F}_p^N)^m)|}{|\mathrm{HP}_{k}((\mathbb{F}_p^N)^m)|}=1,
     \end{equation} so we can focus only on those coordinates. Now, if all the coordinates of $\bm{t}$ are different, then $\E(Y_{\bm{t}}) = \prod_{i=1}^k \E (X_{t_i}) = \prod_{i=1}^k F(t_i)$. We denote the right hand side by $\mu_{\bm{t}}$. Let $\overline{Y}_N = \E_{\bm{t}\in \mathrm{HP}'_{k}((\mathbb{F}_p^N)^m)} (Y_{\bm{t}}-\mu_{\bm{t}})$ and observe that by \eqref{different} we have $\E(\overline{Y}_N) =0. $
     Using the same argument as in Lemma \ref{randomset}, it suffices to bound the $4$-th moment. As in the previous proof, the number of quadruples $(\bm{t^{(1)}},\bm{t^{(2)}},\bm{t^{(3)}},\bm{t^{(4)}})$, where at least one $Y_{\bm{t}^{(i)}}$ is independent on the others is $C(k,4)\cdot|\Phi_N|^2$, for some constant $C(k,4)$, that is larger than the constant in the proof of Lemma \ref{randomset}, but still bounded for a fixed value of $k$. For these quadruples, we get $\E(\prod_{i=1}^4(Y_{\bm{t^{(i)}}}-\mu_{\bm{t}^i}))=0$, and for all other quadruples we can bound the expectation by $1$. Using the exact same argument as in the proof of Lemma \ref{randomset}, we deduce that $\E\left((\overline{Y}_N)^4\right)<\frac{C(k,4)}{|\Phi_N|^2}$, and the claim follows by Borel--Cantelli.
 \end{proof}
 \section{Properties of $\dAPk$}
 In this section we study properties of $\dAPk$ and the constants $c,c_{\IP},c^{\rec}, c_{\IP}^{\rec}$. Throughout, $k\ge 3$ is an integer. 
 \begin{proposition}\label{submult}
    The function $\dAPk:[0,1]\rightarrow [0,1]$ is sub-multiplicative. Namely, $\dAPk(x_1\cdot x_2)\le \dAPk(x_1)\cdot \dAPk(x_2)$, for all $x_1,x_2\in [0,1]$.
 \end{proposition}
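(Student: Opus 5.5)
The plan is to use a product construction. Fix $x_1,x_2\in[0,1]$ and $\varepsilon>0$. By Definition \ref{dAPk:def}, choose compact abelian $p$-torsion groups $U_1,\dots,U_m$ and $f:\mathcal{U}\to[0,1]$ with $\int f\ge x_1$ and $S^k_{U_1,\dots,U_m}(f)<\dAPk(x_1)+\varepsilon$. Similarly choose $W_1,\dots,W_{m'}$ and $g:\mathcal{W}\to[0,1]$ with $\int g\ge x_2$ and $S^k_{W_1,\dots,W_{m'}}(g)<\dAPk(x_2)+\varepsilon$. Padding with trivial groups, we may assume $m=m'$. We then consider the groups $U_j\times W_j$, which are again compact abelian $p$-torsion, together with the function $h(u,w)=f(u)g(w)$ on $\prod_j (U_j\times W_j)\cong \mathcal{U}\times\mathcal{W}$. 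By Fubini, $\int h=\int f\int g\ge x_1x_2$.

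The key step is to identify the Hall-Petresco group of the product:
\[
\mathrm{HP}_{0,1,\dots,k-1}(U_1\times W_1,\dots,U_m\times W_m)\cong \mathrm{HP}_{0,1,\dots,k-1}(U_1,\dots,U_m)\times \mathrm{HP}_{0,1,\dots,k-1}(W_1,\dots,W_m),
\]
compatibly with the coordinate maps into $(\mathcal{U}\times\mathcal{W})^k$. This holds because a polynomial $P_j:\mathbb{Z}\to U_j\times W_j$ of degree $<j+1$ is exactly a pair of polynomials of degree $<j+1$ into $U_j$ and into $W_j$; the coefficients $u_i$ split componentwise. The Haar measure of a product of compact groups is the product of the Haar measures. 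Hence
\[
\int \bigotimes_{i=0}^{k-1} h\,d\mu_{\mathrm{HP}} = S^k_{U}(f)\cdot S^k_{W}(g)<(\dAPk(x_1)+\varepsilon)(\dAPk(x_2)+\varepsilon).
\]
Since $h$ is admissible for density $x_1x_2$, we get $\dAPk(x_1x_2)<(\dAPk(x_1)+\varepsilon)(\dAPk(x_2)+\varepsilon)$. Letting $\varepsilon\to0$ gives the claim.

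The main thing to check carefully is the padding step, which makes the number of groups agree so that the $j$-th layer degrees match. Here one should verify that appending trivial groups $U_j=\{0\}$ does not change $S^k$. Two points need care: the definition requires $m<p$, and the relevant HP group uses only the degrees $<j+1$ up to $k-1$. If needed, one could instead take $m$ equal to the larger of the two and simply set the missing factors trivial. The product group then still has at most $\max(m,m')$ layers, so the constraint $m<p$ is preserved.
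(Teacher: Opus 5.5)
Your proof is correct and follows essentially the same argument as the paper. You take near-extremal examples for $x_1$ and $x_2$, form their product, and use that the Hall--Petresco group and its Haar measure split as a product, so $S^k$ is multiplicative. The only differences are minor: you work directly with $f\otimes g$ rather than with sets (as the paper does), and you pad with trivial groups so the layers match, which makes the splitting of the Hall--Petresco group slightly more explicit.
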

\begin{proof}
Fix some $x_1,x_2\in[0,1]$ and let $\varepsilon>0$ be arbitrarily small. By definition we can find $A_1\subseteq \mathcal{U}_1$ and $A_2\subseteq \mathcal{U}_2$, such that
\begin{equation}
    S_{\mathcal{U}_i}(A_i) < \dAPk(x_i)+\varepsilon
\end{equation} for all $i=1,2$, where $\mathcal{U}_1,\mathcal{U}_2$ are products of finitely many $p$-torsion groups, and $\mu_1(A_1)\ge x_1, \mu_2(A_2)\ge x_2$, where $\mu_1$ and $\mu_2$ are the Haar measures on $\mathcal{U}_1$ and $\mathcal{U}_2$, respectively. Then $A_1\times A_2\subseteq \mathcal{U}_1\times \mathcal{U}_2$ is a subset of measure $(\mu_1\otimes\mu_2)(A_1\times A_2)\ge x_1\cdot x_2$ and $\dAPk(x_1\cdot x_2)\le S_{\mathcal{U}_1\times\mathcal{U}_2}(A_1\times A_2) = S_{\mathcal{U}_1}(A_1)\cdot S_{\mathcal{U}_2}(A_2) < \dAPk(x_1)\cdot \dAPk(x_2) + o_{x_1,x_2,\varepsilon\rightarrow0}(1)$. Since $\dAPk(x)\le 1$ for all $x$ and $\varepsilon>0$ is arbitrary, we see that $\dAPk(x_1\cdot x_2)\le \dAPk(x_1)\cdot \dAPk(x_2)$, as required.
\end{proof}
\begin{proposition}
      The function $\dAPk:[0,1]\rightarrow [0,1]$ is continuous.
\end{proposition}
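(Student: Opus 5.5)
We need to show that $\dAPk$ is continuous on $[0,1]$. The plan is to prove a modulus-of-continuity estimate by explicit perturbation of near-optimal functions. The key tool is that $S^k_{U_1,\dots,U_m}$ is Lipschitz in $L^1$, because each coordinate projection $\mathrm{HP}_{0,1,\dots,k-1}(U_1,\dots,U_m)\to U_1\times\dots\times U_m$, $\bm{x}\mapsto x_i$, is a surjective homomorphism and therefore pushes the Haar measure forward to Haar measure. Concretely, I would first note that $\dAPk$ is nondecreasing: if $\delta\le\delta'$, any admissible $f$ for $\delta'$ is admissible for $\delta$, so $\dAPk(\delta)\le\dAPk(\delta')$. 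Next, telescoping the product $\prod_i f(x_i)-\prod_i g(x_i)$ and using the pushforward fact gives, for $f,g$ with values in $[0,1]$,
$$|S^k_{U_1,\dots,U_m}(f)-S^k_{U_1,\dots,U_m}(g)|\le k\norm{f-g}_{L^1}.$$

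For the upper estimate, fix $\delta<\delta'\le 1$ and $\varepsilon>0$, and pick $U_1,\dots,U_m$ and $f$ with $\int f\ge\delta$ and $S^k(f)<\dAPk(\delta)+\varepsilon$. Replace $f$ by $g=f+(1-f)t$ with $t\in[0,1]$ chosen so that $\int g=\delta'$; this is possible by continuity in $t$ if $\int f<\delta'$, and otherwise we may take $g=f$. Since $\norm{g-f}_{L^1}=t\int(1-f)=\int g-\int f\le\delta'-\delta$ when $\int f=\delta$ (we may first lower $f$ to have integral exactly $\delta$ by scaling, which by monotonicity of $S^k$ only decreases $S^k$), we get
$$\dAPk(\delta')\le S^k(g)\le \dAPk(\delta)+\varepsilon+k(\delta'-\delta).$$
Together with monotonicity this yields $0\le\dAPk(\delta')-\dAPk(\delta)\le k(\delta'-\delta)$, so $\dAPk$ is $k$-Lipschitz and in particular continuous. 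The scaling step $f\mapsto (\delta/\int f) f$ is admissible because it keeps values in $[0,1]$ and $S^k$ is pointwise monotone in $f$.

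The only point needing care is the pushforward claim, which I expect to be the main obstacle, though a mild one. It requires that for each fixed $i$ the map $(u_0,\dots,u_m)\mapsto u_0+iu_1+\binom{i}{2}u_2+\dots$ from the parametrizing group $\prod_j\mathcal{U}_j$ onto the $i$-th coordinate is surjective onto $\mathcal{U}$. This holds already through $u_0$ ranging over $\mathcal{U}_0=\mathcal{U}$. A continuous surjective homomorphism of compact groups maps Haar measure to Haar measure, and $\mathrm{HP}$ itself is the image of the parametrizing group, so its Haar measure is the pushforward of the product Haar measure. Hence $\int f(x_i)\,d\mu_{\mathrm{HP}}=\int f\,d\mu_{\mathcal{U}}$, which is exactly what the telescoping bound uses. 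Working directly with $[0,1]$-valued $f$, as the definition allows, means Lemma \ref{function-set} is not needed here.
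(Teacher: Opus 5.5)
Your proof is correct. It uses the same ingredients as the paper's proof: monotonicity of $\dAPk$, raising the integral of a near-minimizer at small $L^1$ cost, and $L^1$-control of $S^k$ via the fact that each coordinate projection of the Hall--Petresco group pushes Haar measure to Haar measure. It organizes them differently.

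The paper enlarges $U_1$ to $U_1\times\mathbb{F}_p^{\mathbb{N}}$ and sets $f_s=1$ on $\mathcal{U}\times B_s$, a perturbation that keeps indicator functions indicators. It then expands $S^k$ multilinearly and proves only a pointwise one-sided claim: for each $x<1$ and $\varepsilon>0$ there is $T=T(x,\varepsilon)$ with $\dAPk(x+t)\le \dAPk(x)+\varepsilon$ for $0<t<T$. The endpoint $x=1$ is handled separately by the union bound. Left-continuity is obtained by re-applying the claim at the point $x+t$ with the same $T$, which tacitly needs $T$ to be uniform in the base point.

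Your perturbation $g=(1-t)f+t$ on the same group, combined with telescoping, gives directly the uniform bound $0\le \dAPk(\delta')-\dAPk(\delta)\le k(\delta'-\delta)$. So $\dAPk$ is $k$-Lipschitz, both one-sided continuities and the endpoint follow at once, and the uniformity issue never arises. (The paper's $f_{s(t)}$ in fact satisfies $\|f_{s(t)}-f\|_{L^1}=t$ exactly, so the same uniform bound is implicit there, but it is not recorded.)

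One simplification: your preliminary rescaling of $f$ is unnecessary. Since $g\ge f$, you get $\|g-f\|_{L^1}=\delta'-\int f\le \delta'-\delta$ whenever $\int f\ge\delta$.
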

\begin{proof}
Continuity at $x=1$ follows from the union bound. Indeed, if $f_i:X\rightarrow[0,1]$ satisfy $\int f_i\,d\mu>1-\varepsilon_i$, then since $\prod_{i=1}^k f_i \ge \sum_{i=1}^k f_i - (k-1)$, we have that $\int \prod_{i=1}^k f_i\,d\mu \ge 1-\sum_{i=1}^k \varepsilon_i$. In other words, $\dAPk(1-\varepsilon)\ge 1-k\varepsilon\rightarrow 1$ as $\varepsilon\rightarrow 0$.

Now let $x\in[0,1)$. We show that for every $\varepsilon>0$, there exists $T>0$ sufficiently small such that for all $0<t<T$ we have
$$\dAPk(x+t)\le \dAPk(x) + \varepsilon.$$ 

Let $\mathcal{U}=U_1\times...\times U_m$ be a product of finitely many compact abelian $p$-torsion groups. Let $f:\mathcal{U}\rightarrow[0,1]$ be measurable and suppose that $\int_{\mathcal{U}} f\,d\mu_{\mathcal{U}} \ge x$ and $S_{\mathcal{U}}^k(f) <\dAPk(x) + \varepsilon/2$. Consider the group $\mathcal{U}'$ being the same as $\mathcal{U}$ but with $U_1' = U_1\times \mathbb{F}_p^\mathbb{N}$.

For every $s\in[0,1]$, we can find a subset $B_s\subseteq \mathbb{F}_p^\N$ of density $s$ and set $$f_s(u,v):= \begin{cases} 1 & \text{if } v\in B_s\\ f(u) & \text{if } v\notin B_s\end{cases}$$ for $u\in \mathcal{U}$ and $v\in \mathbb{F}_p^\N$. Then $\int_{\mathcal{U}'} f_s~d\mu_{\mathcal{U}'} = (1-s)\int_{\mathcal{U}} f~d\mu_{\mathcal{U}} + s.$

By continuity of the expression above, we can find a continuous function $s:[0,1]\rightarrow [0,1]$ (mapping $0$ to itself) so that $\int_{\mathcal{U}'} f_{s(t)}~d\mu_{\mathcal{U}'} = \int_{\mathcal{U}} f~d\mu_{\mathcal{U}}+t$. In particular, we have $S^k_{\mathcal{U}'}(f_{s(t)})\geq\dAPk(x+t).$ On the other hand, $$\|f_{s(t)}-f\|_{L^1} \le s(t)\left(1-\int_{\mathcal{U}}f~d\mu_{\mathcal{U}}\right)\le   s(t)\cdot (1-x).$$ In particular, taking $T$ sufficiently small with respect to $\varepsilon$, $s$ and $x$, we can ensure that  $\|f_{s(t)}-f\|_{L^1}$ is arbitrarily small for all $0<t<T$.

Writing $f_{s(t)} = (f_{s(t)} - f) + f$ and opening the brackets gives that $S^k_{\mathcal{U}'}(f_{s(t)}) - S_{\mathcal{U}}^k(f)$ is a sum of $O(k)$-elements each bounded by $\|f_{s(t)}-f\|_{L^1}$. In other words
$$\left|S^k_{\mathcal{U}'}(f_{s(t)}) - S_{\mathcal{U}}^k(f) \right| = o_{k,T\rightarrow 0}(1).$$

Thus, taking $T$ sufficiently small (with respect to $f,s,\varepsilon,x$ and $k$) we see that for all $t<T$ we have
$$\dAPk(x+t)\leq \dAPk(x)+\varepsilon.$$

We will now use this to prove continuity at $x$. Let $\varepsilon>0$ and choose $T$ as in the claim above. By shrinking $T$ if necessary, we may further assume that $[x-T,x+T]\subseteq[0,1)$. Then for all $|t|<T$ we have: if $t>0$, then by monotonicity and the previous claim 
$$\dAPk(x)\le \dAPk(x+t)\le \dAPk(x) + \varepsilon,$$
and if $t<0$, by monotonicity and the previous claim with $x$ taking the place of $x+t$, we have
$$\dAPk(x) -\varepsilon \le \dAPk(x+t)\le \dAPk(x).$$ Either way, we see that 
$$|\dAPk(x)-\dAPk(y)|<\varepsilon$$ whenever $|x-y|<T$, which proves continuity in $x$.
\end{proof}

Finally, we show that the definition $\dAPk$ remains unchanged if the role of $f$ is replaced with an arbitrary subset $A$.
\begin{lemma}\label{function-set}
Let $k\ge 3$. For every $\delta\in[0,1]$ we set
    $$\dAPk'(\delta) = \inf_{U_1,\dots,U_m} \inf_{\mu(A)\ge \delta} S_{U_1,\dots,U_m}(1_A)$$ where the infimum is over all compact abelian $p$-torsion groups $U_1,\dots,U_m$ and subsets $A\subseteq \prod_{i=1}^m U_i$ satisfying $\mu(A)\ge \delta$, where $\mu$ is the Haar measure on $\mathcal{U}=\prod_{i=1}^m U_i$. Then $\dAPk'(\delta)=\dAPk(\delta)$.
\end{lemma}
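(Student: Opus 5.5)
The inequality $\dAPk(\delta)\le \dAPk'(\delta)$ is immediate, since characteristic functions are admissible in the definition of $\dAPk$. The work is the reverse inequality. Given a measurable $f:\mathcal{U}=U_1\times\dots\times U_m\rightarrow[0,1]$ with $\int f\,d\mu_{\mathcal{U}}\ge \delta$ and $\varepsilon>0$, I will produce a set $A$ in a (larger) product of $p$-torsion groups with $\mu(A)\ge\delta$ and $S^k(1_A)\le S^k_{U_1,\dots,U_m}(f)+\varepsilon$. The plan is to randomize $f$ in a new, independent direction that is invisible to the Hall--Petresco constraint, in the spirit of Proposition \ref{randomset} and Lemma \ref{randomV2}.

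\textbf{Step 1: reduce to finite groups and continuous (hence locally constant) $f$.} Each $U_i\cong\mathbb{F}_p^{I_i}$ with $I_i$ countable. Continuous functions are dense in $L^1$, and $S^k$ is Lipschitz in $L^1$ norm on $[0,1]$-valued functions (expand $\prod f_i$ telescopically, as in the continuity proof above). So we may assume $f$ depends on finitely many coordinates, i.e. $f$ factors through a finite quotient $\prod_i\mathbb{F}_p^{n_i}$. Lifting back preserves $S^k$, since the Hall--Petresco group maps onto the Hall--Petresco group of the quotient, with Haar measure pushing forward to Haar measure. Losing $\varepsilon/3$ in density, we may also raise $f$ slightly (e.g. $\min(1,f+\eta)$), which moves $S^k$ by $O(k\eta)$. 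This keeps the density $\ge\delta$ after the random rounding below.

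\textbf{Step 2: randomize in an extra coordinate.} Replace $U_1$ by $U_1\times\mathbb{F}_p^{M}$ with $M$ large; since $U_1$ carries degree-$<2$ polynomials, the new coordinate $v$ ranges over genuine $3$-APs $v, v+b, v+2b,\dots$. Choose a random set $A\subseteq \mathcal{U}\times\mathbb{F}_p^M$ by including each point $(u,v)$ independently with probability $f(u)$. For a Hall--Petresco tuple $(x_0,\dots,x_{k-1})$ whose $\mathbb{F}_p^M$-components $v+ib$ are pairwise distinct (true whenever $b\neq0$, since $k\le p$), the indicators are independent. The expectation of $\prod 1_A(x_i)$ is therefore $\prod f(u_i)$. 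The tuples with $b=0$ have proportion $p^{-M}$.

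\textbf{Step 3: concentration.} Hence $\E\, S^k(1_A)\le S^k(f)+p^{-M}$, and $\E\,\mu(A)=\int f$. A fourth-moment (or simply second-moment) bound, exactly as in the Claim of Proposition \ref{randomset} and Lemma \ref{randomV2}, shows both quantities concentrate as $M\rightarrow\infty$: dependent pairs of tuples form a vanishing proportion. So some realization $A$ has $\mu(A)\ge\delta$ and $S^k(1_A)\le S^k(f)+\varepsilon$.

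The main obstacle is Step 3's variance count, specifically making sure that dependencies among Hall--Petresco tuples sharing a point are negligible. Two tuples share a point only if some $x_i=x'_j$, which is a codimension-$\dim(\mathcal{U}\times\mathbb{F}_p^M)$ condition. Its proportion is $O(k^2/|\mathcal{U}\times\mathbb{F}_p^M|)\rightarrow0$, which I expect suffices via Chebyshev.
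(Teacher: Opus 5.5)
Your argument is correct. Its core is the same as the paper's: round $f$ to a set by independent Bernoulli choices, use exact independence for Hall--Petresco tuples with pairwise distinct points, and control fluctuations by a moment bound. The execution differs in ways worth noting.

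The paper's route is as follows. It first uses the continuity of $\dAPk$ (the preceding appendix proposition) to obtain a continuous $f$ with $\int f>\delta$ strictly. It then enlarges the $U_i$ so that $\mathbb{F}_p^\omega$ embeds densely, and uses continuity of $f$ to write $S^k(f)$ as a limit of averages over $\mathrm{HP}_{0,1,\dots,k}(\mathbb{F}_p^N,\dots,\mathbb{F}_p^N)$. Next it rounds $f$ at the points of $(\mathbb{F}_p^N)^m$ themselves via Lemma \ref{randomV2}, using a fourth moment and Borel--Cantelli to get almost-sure convergence. Finally it lifts $E_N$ to a cylinder set.

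You proceed differently at each stage:
\begin{itemize}
\item You pass to a finite quotient using the $L^1$-Lipschitz bound $|S^k(f)-S^k(g)|\le k\|f-g\|_{L^1}$.
\item You get strict density by a bump, not by the continuity of $\dAPk$.
\item You randomize in a fresh $\mathbb{F}_p^M$ factor adjoined to $U_1$. This keeps $f$ and $\mathcal{U}$ fixed and gives $|\E S^k(1_A)-S^k(f)|\le p^{-M}$ directly, with no limiting procedure and no continuity of $f$.
\item Since you need only one good realization, a single Chebyshev estimate suffices, with variance at most $k^2/|\mathcal{U}\times\mathbb{F}_p^M|$ as you say.
\end{itemize}
Your route is more self-contained and elementary. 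The paper's reuses its random-set machinery.

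Three small repairs are needed.
\begin{itemize}
\item Continuous functions on $\mathbb{F}_p^I$ are not locally constant. Justify the reduction instead by replacing $f$ with its conditional expectation onto finitely many coordinates. This is $[0,1]$-valued, has exactly the same integral, factors through a finite quotient, and converges to $f$ in $L^1$ by martingale convergence. Then no density is lost at this step.
\item Quantify the bump via the pointwise inequality $\min(1,g+s)\ge(1-s)g+s$. It gives $\int\min(1,g+s)\ge\delta+s(1-\delta)>\delta$ for $\delta<1$, while moving $S^k$ by at most $ks$. The case $\delta=1$ is trivial.
\item In Step 2 the adjoined coordinate runs over $k$-APs, not $3$-APs.
\end{itemize}
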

\begin{proof}
    Since $1_A$ is a function taking values in $[0,1]$, and satisfying that $\int 1_A \,d\mu = \mu(A)\ge \delta$, we see that $\dAPk(\delta)\le \dAPk'(\delta)$. Therefore, it suffices to prove an inequality in the other direction. Equivalently, let $\delta>0$ and let $\varepsilon>0$ be arbitrarily small. We need to show that $\dAPk'(\delta)<\dAPk(\delta)+\varepsilon$.
    Since the continuous functions are dense in $L^1(\mu)$ and $\dAPk(\delta)$ is continuous in $\delta$, we can find compact abelian groups $U_1,\dots,U_m$, and a continuous function $f:\prod_{i=1}^m U_i\rightarrow [0,1]$ with $\int f \,d\mu> \delta$, such that \begin{equation}\label{locatef}S_{U_1,\dots,U_m}(f) <\dAPk(\delta)+\varepsilon/2.
    \end{equation}
    Extending $U_1,\dots,U_m$ if necessary and lifting $f$ accordingly, we may assume without loss of generality that the $U_i$ are infinite. Hence, we can embed $\mathbb{F}_p^\omega$ as a dense subgroup in each one of the $U_i$'s, and we see that $\mathrm{HP}_{0,1,\dots,k}(\mathbb{F}_p^\omega,\mathbb{F}_p^\omega,\dots,\mathbb{F}_p^\omega) \subseteq \mathrm{HP}_{0,1,\dots,k}(U_1,\dots,U_m)$ is also dense. Since $f$ is continuous, we have 
    $S_{U_1,\dots,U_m}(f) = \lim_{N\rightarrow\infty} S_{\mathbb{F}_p^N,\mathbb{F}_p^N,\dots,\mathbb{F}_p^N}(f)$. Applying Lemma \ref{randomV2}, we see that we can find a subset $E\subseteq \prod_{i=1}^m \mathbb{F}_p^\omega$, such that $d_{\Phi}(E)> \delta$ with respect to the classical F\o lner sequence $\Phi =(\prod_{i=1}^m \mathbb{F}_p^N)_{N\in\mathbb{N}}$, and
    $S_{U_1,\dots,U_m}(f) = \lim_{N\rightarrow\infty} S_{\mathbb{F}_p^N,\mathbb{F}_p^N,\dots,\mathbb{F}_p^N}(1_{E_N}),$ where $E_N = E\cap \prod_{i=1}^m \mathbb{F}_p^N$. Choose $N=N(\varepsilon)$ sufficiently large, such that $|E_N| > \delta p^{m\cdot N}$ and     \begin{equation}\label{locateN}
S_{\mathbb{F}_p^N,\mathbb{F}_p^N,\dots,\mathbb{F}_p^N}(1_{E_N}) < S_{U_1,\dots,U_m}(f) + \varepsilon/2.
    \end{equation} Write $\mathbb{F}_p^{\omega} = \mathbb{F}_p^N\times \mathbb{F}_p^{\omega\setminus N}$, and let $A$ denote the closure of $E_N\cdot \prod_{i=1}^m \mathbb{F}_p^{\omega\setminus N}$. Then $\mu(A) = |E_N|/p^{m\cdot N} > \delta$. On the other hand, it follows from the construction that
    $$S_{U_1,\dots,U_m}(1_A) \le S_{\mathbb{F}_p^N,\mathbb{F}_p^N,\dots,\mathbb{F}_p^N}(1_{E_N}).$$
    Combining this with \eqref{locatef} and \eqref{locateN} we see that
    $$\dAPk'(\delta) \le S_{U_1,\dots,U_m}(1_A)\le \dAPk(\delta)+\varepsilon,$$ as required.
\end{proof}


\bibliographystyle{abbrv}
\bibliography{bibliography}

\end{document}